\documentclass[reqno,11pt]{amsart}
\usepackage{amsmath,amsfonts,amssymb,amsxtra,latexsym,amscd,enumerate,enumitem,amsthm,verbatim,here}
\usepackage{mathtools}
\usepackage{array}
\usepackage{booktabs}
\usepackage[table,dvipsnames]{xcolor}
\usepackage{cancel}

\usepackage[margin=1.2in]{geometry}
\numberwithin{equation}{section}
\usepackage{hyperref}
\definecolor{myrefred}{RGB}{180,0,0}
\definecolor{myciteblue}{RGB}{0,70,140} % light blue
\hypersetup{
    colorlinks=true,
    pdfstartview=FitV,
    linkcolor=myrefred,      % \ref, \eqref, section links
    citecolor=myciteblue,    % \cite
    urlcolor=black
}
\definecolor{labelkey}{rgb}{0.6,0,0}

\newcommand{\R}{\mathbb{R}}

\newcommand{\C}{\mathbb{C}}

\newcommand{\la}{\langle}
\newcommand{\ra}{\rangle}
\newcommand{\non}{{\mathcal{N}}}
\newcommand{\smt}{{\mathcal{T}}}
\newcommand{\ent}{{\mathfrak{Q}}}
\newcommand{\nata}{{\mathfrak{r}}}
\newcommand{\dob}{{\mathbf{D}}}
\newcommand{\kfun}{{\mathcal{K}}}
\newcommand{\mass}{{\mathbf{M}}}
\newcommand{\ifun}{{\mathcal{I}}}
\newcommand{\Tone}{{\smt_{L_1}}}
\newcommand{\Ttwo}{{\smt_{L_2}}}
\newcommand{\g}{{\mathfrak{g}}}
\newcommand{\h}{{\mathfrak{h}}}

\newcommand{\dom}{{\R_+}}

\numberwithin{equation}{section} %pour numeroter les equations par section

\newtheorem{theorem}{Theorem}

\newtheorem{lemma}{Lemma}[section]
\newtheorem{corollary}[lemma]{Corollary}

\newtheorem{proposition}[lemma]{Proposition}

\theoremstyle{remark}
\newtheorem{remark}[lemma]{Remark}

\definecolor{red}{RGB}{180,0,0}
\definecolor{blue}{RGB}{0,55,170}
\definecolor{ForestGreen}{RGB}{0,105,55}
\makeatletter
\newcommand{\sectionnotoc}[1]{%
  \begingroup
  \let\@tocwrite\@gobbletwo
  \section*{#1}%
  \endgroup
}
\makeatother
\begin{document}
\title[3D Hartree equation with singular external potentials]{Modified scattering for the 3D Hartree equation with singular external potentials}
\author{Mengyi Xie}
\address{Yale University}
\email{mengyi.xie@yale.edu}

\begin{abstract}
We establish modified scattering for the three-dimensional Hartree equation with Coulomb interaction and singular external potentials, including repulsive Coulomb and inverse-square potentials. In the defocusing case, the result holds for arbitrary initial data in $H^{0,1}(\R^3)\cap H^{1,0}(\R^3)$. In particular, we obtain large-data modified scattering for the flat defocusing Hartree equation. For the focusing problem with repulsive Coulomb potential $V(x)=\kappa|x|^{-1}$, we prove radial modified scattering under the sharp mass condition $|\lambda|\|u_0\|_2^2\leq2\kappa$, including the endpoint, and construct standing waves at every larger mass. Our argument obtains strong $L^2$ convergence from rescaled energy and comparison with a frozen-field evolution.
\end{abstract}

\maketitle
%%%%%%%%%%%%%%%%%%%%%%%%%%%%%%%%%%%%%%%
%%%%%%%%%%%%%%%%%%%%%%%%%%%%%%%%%%%%%%%
%%%%%%%%%%%%%%%%%%%%%%%%%%%%%%%%%%%%%%%

\setcounter{tocdepth}{1}
\tableofcontents

%%%%%%%%%%%%%%%%%%%%%%%%%%%%%%%%%%%%%%%
%%%%%%%%%%%%%%%%%%%%%%%%%%%%%%%%%%%%%%%
%%%%%%%%%%%%%%%%%%%%%%%%%%%%%%%%%%%%%%%

\section{Introduction}\label{sec:intro}
We study the three-dimensional Hartree equation with an external potential $V$,
\begin{equation}\label{main}
i\partial_tu-\Delta u+Vu+\lambda\phi_u u=0, \qquad \phi_u=|x|^{-1}*|u|^2, \qquad u(0)=u_0.
\end{equation}
Here $u:\R_t\times\R_x^3\to\C$, $V$ is real-valued, and $\lambda\in\R\setminus\{0\}$. The interaction is defocusing when $\lambda>0$ and focusing when $\lambda<0$. Since $-\Delta\phi_u=4\pi|u|^2$, \eqref{main} is also called the Schr\"odinger--Poisson equation. Our principal examples are the repulsive Coulomb potential $V(x)=\kappa|x|^{-1}$ and the inverse-square potential $V(x)=\kappa|x|^{-2}$, with $\kappa>0$.

The variable $y=x/t$ is natural for describing the dispersion of \eqref{main}: a free Schr\"odinger wave packet travels at constant group velocity, so its position divided by time approaches that velocity. Observing the solution along $x=ty$ therefore describes its expanding spatial distribution on a fixed scale. The rescaled density $m(t,y):=t^3|u(t,ty)|^2$ preserves the total mass, and a change of variables in the convolution gives, for $t>0$,
\begin{equation}\label{eq:ray_field}
\phi_u(t,ty)=\frac{1}{t} \int_{\R^3}\frac{m(t,z)}{|y-z|}\,dz.
\end{equation}
This identity explains the long-range effect: a nonzero limiting density on the ray scale generates a potential of order $t^{-1}$, whose time integral produces a logarithmic phase. The physical-space approach developed in \cite{PX} and extended in \cite{HX,HX26} uses this relation between the density, the field, and the phase.

The Hartree field $\phi_u$ is generated by the solution, whereas $V$ is prescribed. For $V(x)=\kappa|x|^{-1}$, the identity $V(ty)=t^{-1}\kappa|y|^{-1}$ shows that the external field contributes a second logarithmic term to the phase. For the faster-decaying potentials considered below, including $V(x)=\kappa|x|^{-2}$, the explicit logarithmic phase comes only from the Hartree interaction. Their singularities at the origin nevertheless require additional control in the scattering argument.

\medskip

We work in the space
\begin{equation}\label{eq:Sigma}
    \Sigma:=\big\{f\in H^1(\R^3):xf\in L^2(\R^3)\big\}, \qquad \|f\|_\Sigma:=\|f\|_{H^1}+\|xf\|_2,
\end{equation}
and write
\begin{equation}\label{eq:Coulomb_field_notation}
    \phi_f(y):=\int_{\R^3}\frac{|f(z)|^2}{|y-z|}\,dz,\qquad
    \dob[f]:=\iint_{\R^3\times\R^3} \frac{|f(y)|^2|f(z)|^2}{|y-z|}\,dy\,dz.
\end{equation}
Thus $\phi_f$ is the Coulomb field generated by $|f|^2$, and $\dob[f]=\int\phi_f|f|^2$ is the Coulomb interaction functional. Equation \eqref{main} conserves the mass
\begin{equation}\label{mass}
    \mass:=\|u(t)\|_2^2=\|u_0\|_2^2,
\end{equation}
and the energy
\begin{equation}\label{energy}
    \mathbf E:=\frac12\|\nabla u(t)\|_2^2+\frac12\int_{\R^3} V(x)|u(t,x)|^2\,dx +\frac{\lambda}{4}\dob[u(t)].
\end{equation}
For the nonnegative potentials considered here, $-\Delta+V$ denotes the self-adjoint operator associated with the closed quadratic form
\[
    q_V[f]=\|\nabla f\|_2^2+\int_{\R^3}V|f|^2, \qquad f\in H^1(\R^3).
\]
We use the corresponding global solutions in $C(\R;\Sigma)$. The solution framework and the conservation laws needed below will be recalled in Section \ref{sec:est}; our main task is to determine the long-time asymptotics.

%%%%%%%%%%%%%%%%%%%%%%%%%%%%%%%%%%%%%%%
%%%%%%%%%%%%%%%%%%%%%%%%%%%%%%%%%%%%%%%
%%%%%%%%%%%%%%%%%%%%%%%%%%%%%%%%%%%%%%%

\subsection{Main results for the Coulomb potential}
We first consider
\begin{equation}\label{eq:Coulomb_potential}
    V(x)=\frac{\kappa}{|x|},\qquad \kappa>0.
\end{equation}
In the defocusing case, we prove modified scattering for arbitrary data in $\Sigma$.
\begin{theorem}\label{thm:Coulomb_defocusing}
    Fix $\kappa>0$ and $\lambda>0$. For every $u_0\in\Sigma$, \eqref{main} has a unique global solution $u\in C(\R;\Sigma)$. There exist $U_\infty\in L^2(\R^3)$ and $\Phi_\infty\in\dot H^1(\R^3)$ such that
    \begin{equation}\label{eq:modified_scattering_coulomb}
        u(t,x)=t^{-3/2}e^{-i|x|^2/(4t)}
        e^{i(\kappa t/|x|+\lambda\Phi_\infty(x/t))\log t}
        U_\infty(x/t)+o_{L_x^2}(1)
    \end{equation}
    as $t\to\infty$. Moreover, $\|U_\infty\|_2^2=\mass$, and the limiting field is determined by the asymptotic profile:
    \begin{equation}\label{eq:phase_connection}
        \Phi_\infty=\phi_{U_\infty} = |\cdot |^{-1}*|U_\infty|^2.
    \end{equation}
\end{theorem}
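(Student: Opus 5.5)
We pass to the pseudo-conformal (lens) variables. Write
\[
u(t,x)=t^{-3/2}e^{-i|x|^2/(4t)}\,w(t,x/t),
\]
so that $\|w(t)\|_2^2=\mass$ and $|w(t,y)|^2=m(t,y)$. Substituting into \eqref{main} and using \eqref{eq:ray_field} together with $V(ty)=t^{-1}\kappa|y|^{-1}$ yields
\[
i\partial_t w-\frac{1}{t^2}\Delta_y w+\frac{1}{t}\Big(\frac{\kappa}{|y|}+\lambda\phi_{w(t)}\Big)w=0 .
\]
The theorem then reduces to two facts. First, $\phi_{w(t)}\to\Phi_\infty$ in a suitable sense as $t\to\infty$. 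Second, $e^{-i(\kappa/|y|+\lambda\Phi_\infty)\log t}w(t)$ converges strongly in $L^2$ to some $U_\infty$. Once both hold, mass conservation gives $\|U_\infty\|_2^2=\mass$. Moreover, $|w(t)|^2\to|U_\infty|^2$ in $L^1$, and by Hardy--Littlewood--Sobolev this gives $\phi_{w(t)}\to\phi_{U_\infty}$ in $\dot H^1$, which is \eqref{eq:phase_connection}.

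The first step is a priori control in the $y$-variables, uniform in $t\geq1$. The pseudo-conformal energy
\[
\frac18\|(x+2it\nabla)u\|_2^2+\frac{t^2}{2}\int V|u|^2+\frac{\lambda t^2}{4}\dob[u]
\]
satisfies a virial identity. In the defocusing, repulsive setting every term in it has a favourable sign: $x\cdot\nabla V=-V$ and the Hartree scaling also gives homogeneity $-1$. These produce monotonicity up to terms of size $t\int V|u|^2+t\lambda\dob[u]$, which are nonnegative with the good sign. Since $\|(x+2it\nabla)u\|_2=2\|\nabla_y w\|_2$ and $t^2\int V|u|^2=t\int\kappa|y|^{-1}|w|^2$, we obtain
\[
\|\nabla_y w(t)\|_2^2+t\int\frac{\kappa|w|^2}{|y|}+t\lambda\dob[w(t)]\lesssim_{u_0} t .
\]
This alone is too weak, so we refine it by tracking the exact derivative. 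Our goal is the ``rescaled energy'' statement that
\[
E_w(t):=\frac{1}{t}\|\nabla_y w\|_2^2+\int\Big(\frac{\kappa}{|y|}+\frac{\lambda}{2}\phi_w\Big)|w|^2
\]
has a limit with an integrable-in-$\log t$ error. Both this and the growth bound $\|\nabla_yw\|_2=o(t^{1/2})$, hence $\|t^{-2}\Delta w\|_{H^{-1}}=o(t^{-3/2})$, should follow from the virial identity.

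Next we show that the density and field converge. From the equation, $\partial_t|w|^2=t^{-2}\,\mathrm{div}\,(\text{current})$ with current $2\,\mathrm{Im}(\bar w\nabla w)$. Pairing with a test function and using $\|\nabla_y w\|_2\lesssim t^{1/2}$ gives $|\partial_t\langle|w|^2,\psi\rangle|\lesssim t^{-3/2}$, which is integrable. Hence $m(t)\rightharpoonup m_\infty$ weakly. No mass escapes to $|y|=\infty$, because $\|y\,w\|_2=\|xu/t\|$ is controlled: the $\Sigma$ norm of $u$ grows at most linearly. The key issue is upgrading this to convergence of $\phi_{w(t)}$ strongly enough to integrate against $t^{-1}$, namely
\[
\int_1^\infty\|(\phi_{w(t)}-\Phi_\infty)w\|_2\,\frac{dt}{t}<\infty,
\]
or at least ensuring this term is $o(1)$ after the comparison below. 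We would obtain a rate $\|\phi_{w(t)}-\Phi_\infty\|_{L^\infty}\lesssim t^{-\delta}$ by interpolating the $t^{-3/2}$ weak rate with the uniform $L^{3/2}$ and mass bounds on $m$.

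Finally we compare with a frozen-field evolution. Let $v(t)=e^{i(\kappa/|y|+\lambda\Phi_\infty)\log t}\,\tilde w(t)$, where $\tilde w$ is the modulated profile. Then
\[
\partial_t\tilde w=e^{-i(\cdot)\log t}\big[it^{-2}\Delta w+it^{-1}\lambda(\phi_w-\Phi_\infty)w\big].
\]
To show $\tilde w(t)$ is Cauchy in $L^2$, it is enough to prove weak convergence plus $\|\tilde w(t)\|_2=\sqrt{\mass}$. The norm is constant, so weak convergence together with the limit having full mass gives strong convergence. Weak convergence follows by testing against smooth compactly supported $\psi$ away from $y=0$. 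The Laplacian term gives
\[
t^{-2}\langle \nabla w,\nabla(e^{i(\cdot)\log t}\psi)\rangle\lesssim t^{-2}\cdot t^{1/2}\log t ,
\]
which is integrable, and the field term is integrable by the rate from the previous step. Full mass of the limit is equivalent to $m_\infty$ having mass $\mass$, which we get from tightness at $y=\infty$ and at $y=0$. We expect the \textbf{main obstacle} to be this final step combined with the singularity of the Coulomb phase $\kappa|y|^{-1}\log t$ at $y=0$. The test functions must vanish near the origin, and excluding concentration of $m(t)$ near $y=0$ requires the good-sign bound
\[
\int \frac{|w|^2}{|y|}\lesssim 1 ,
\]
which in turn uses the refined rescaled-energy monotonicity. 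This is where the repulsivity $\kappa>0$ and the defocusing sign $\lambda>0$ are essential.
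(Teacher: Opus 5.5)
\textbf{There is a genuine gap in your final step, the passage from weak to strong convergence.} Your weak-convergence argument for $\tilde w(t)=e^{-i\non_\infty\log t}w(t)$ is fine. Testing against $\psi\in C_c^\infty(\R^3\setminus\{0\})$, the field term is integrable using only $\|\phi_{w(t)}-\Phi_\infty\|_2\lesssim t^{-1/4}$, which follows from $\|\partial_t\phi_w\|_2\lesssim t^{-5/4}$.

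The problem is the claim that the weak limit $U_\infty$ has full mass because $m(t)=|\tilde w(t)|^2$ is tight at $y=0$ and $y=\infty$. This is false. If $\tilde w(t)\rightharpoonup U_\infty$ in $L^2$ and $m(t)\rightharpoonup m_\infty$, you only get $|U_\infty|^2\leq m_\infty$. Mass can be lost through oscillation with no spatial escape: $e^{is(t)y_1}\psi(y)$ with $s(t)\to\infty$ has constant density and converges weakly to $0$. Tightness rules out concentration and escape to infinity, but not phase defects, and nothing in your argument excludes $U_\infty=0$. What you obtain is a weak-convergence statement of Hirata's type. Since full mass of the weak limit is equivalent to strong convergence, the argument is circular at exactly the point where the theorem's content lies.

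\textbf{What is missing is a quadratic estimate in which the solution is tested against itself.} Mass conservation turns $\|h(t)-h(R)\|_2^2$ into $2\Re\la h(R)-h(t),h(R)\ra$. You must show that its $t$-derivative integrates to $o(1)$ as $R\to\infty$. That derivative is a pairing of $g(t)$ with $e^{i\non_\infty\log(t/R)}g(R)$, after localizing away from $y=0$ so that $\nabla\non\in L^2$. This test function is rough, which creates two problems your proposal does not address.

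\begin{enumerate}
\item The Laplacian term is bounded by $t^{-2}\|\nabla g(t)\|_2\|\nabla g(R)\|_2$. With only the pointwise bound $\|\nabla g\|_2\lesssim t^{1/2}$, this is $O(1)$ after integrating over $[R,\infty)$. You need the integrated bound $\int_1^\infty t^{-2}\|\nabla g\|_2^2\,dt<\infty$ together with Cauchy--Schwarz. That integrated bound is what $\kfun'=-t^{-2}\|\nabla g\|_2^2$ actually yields; it does not give $\|\nabla g\|_2=o(t^{1/2})$.
\item The field error and the phase gradient $\log(t/R)\,\nabla\non$ must act on a bounded function, because only $\|\phi_g-\Phi_\infty\|_2\lesssim t^{-1/4}$ and $\nabla\non\in L^2$ are available. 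Your proposed $L^\infty$ field rate cannot be obtained: it would need a uniform $L^{3/2}$ bound on $m$, whereas $\|m(t)\|_{3/2}=\|w(t)\|_3^2\lesssim\|\nabla w(t)\|_2\lesssim t^{1/2}$. This forces an amplitude truncation $\smt_L$. Its discarded mass is controlled by the entropy $\ent[g]$, which stays bounded since $|\partial_t\ent[g]|\lesssim t^{-2}\|\nabla g\|_2^2$.
\end{enumerate}

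The paper implements these ingredients through comparison with the frozen-field evolution $g_\delta^S$, using two truncation levels (Propositions \ref{prop:frozen_comparison_convergence} and \ref{prop:frozen_profile_Cauchy}).

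A minor point: $L^1$ convergence of the densities does not give $\dot H^1$ convergence of the fields via Hardy--Littlewood--Sobolev. Distributional convergence, however, suffices to identify $\Phi_\infty=\phi_{U_\infty}$.
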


For the focusing interaction, radial symmetry yields a sharp mass threshold, including its endpoint.
\begin{theorem}\label{thm:coulomb_focusing}
    Fix $\kappa>0$ and $\lambda<0$. For radial initial data $u_0\in\Sigma$ satisfying
    \[
        |\lambda|\mass\leq 2\kappa, \qquad \mass=\|u_0\|_2^2,
    \]
    the unique global solution $u\in C(\R;\Sigma)$ to \eqref{main} satisfies
    \begin{equation} \label{eq:modified_scattering_coulomb_radial}
        u(t,x)=t^{-3/2}e^{-i|x|^2/(4t)}
        e^{i(\kappa t/|x|+\lambda\Phi_\infty(x/t))\log t}
        U_\infty(x/t)+O_{L_x^2}(t^{-1/12})
    \end{equation}
    for a radial profile $U_\infty\in L^2(\R^3)$, with $\|U_\infty\|_2^2=\mass$ and $\Phi_\infty=\phi_{U_\infty}\in\dot H^1(\R^3)$.

    Conversely, for each prescribed mass $\mass>2\kappa/|\lambda|$, there exist a nonzero radial $Q\in\Sigma$ and $\omega>0$ such that $\|Q\|_2^2=\mass$ and
    \begin{equation}\label{eq:ODE}
        \left(-\Delta+\frac{\kappa}{|x|}\right)Q+\lambda\phi_Q Q+\omega Q=0.
    \end{equation}
    The standing wave $u(t)=e^{-i\omega t}Q$ is a global solution of \eqref{main} that does not satisfy \eqref{eq:modified_scattering_coulomb}.
\end{theorem}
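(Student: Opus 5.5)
The plan has two halves: the modified scattering statement for radial data at or below the threshold, and the construction of standing waves above it.

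\emph{Scattering part.} The key point is that radial symmetry plus Newton's theorem makes the focusing self-interaction dominated by the repulsive Coulomb potential. For radial $u$, $\phi_u(x)=\int\min(|x|^{-1},|z|^{-1})|u(z)|^2dz\le \mass/|x|$. Hence the effective potential satisfies
\[
V+\lambda\phi_u\ \ge\ \frac{\kappa-|\lambda|\mass}{|x|}\ \ge\ -\frac{\kappa}{|x|}.
\]
This gives lower bounds on the energy, and it also gives the sign condition in the pseudo-conformal (rescaled energy) identity. I would run the same scheme as in the defocusing Theorem~\ref{thm:Coulomb_defocusing}.

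First, introduce the lens-transformed / rescaled energy
\[
\mathcal E(t)=\tfrac18\|(x+2it\nabla)u\|_2^2+\tfrac{t^2}{2}\int V|u|^2+\tfrac{\lambda t^2}{4}\dob[u].
\]
Its time derivative is a multiple of $t\int(2V+x\cdot\nabla V)|u|^2+t\lambda\cdot(\text{homogeneity term})$. Because both $V$ and $|x|^{-1}$ are homogeneous of degree $-1$, this reduces to $\tfrac t2\bigl(\int V|u|^2+\tfrac\lambda2\dob[u]\bigr)$.

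Next, use Newton's bound $\dob[u]\le \mass\int|x|^{-1}|u|^2$ in the radial case. Under $|\lambda|\mass\le2\kappa$ this makes $\int V|u|^2+\tfrac\lambda2\dob[u]\ge \tfrac{\kappa-|\lambda|\mass/2}{1}\int|x|^{-1}|u|^2\ge0$. The factor $2$ in the threshold comes exactly from the $\tfrac14$ versus $\tfrac12$ weights in the energy. The same bound shows that the potential part of $\mathcal E$ is nonnegative.

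From this I would derive $\|(x+2it\nabla)u\|_2\lesssim t^{1/2}$ at worst. At the endpoint there is no coercive gain, so only growth control is available. I would also record the integrated bound $\int^T t\int|x|^{-1}|u|^2\,dt\lesssim \mathcal E(T)$.

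Then pass to $v(t,y)=t^{3/2}e^{i|ty|^2/4t}u(t,ty)$. The field is $t^{-1}(\kappa|y|^{-1}+\lambda\phi_{v})$, by \eqref{eq:ray_field}. One obtains $\partial_t v$ equal to $t^{-1}(\cdot)v$ plus $t^{-2}\Delta_y v$. The rescaled energy bound controls $\|\nabla_y v\|_2\lesssim t^{1/2}$.

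Now compare with the frozen-field evolution, i.e.\ drop the $t^{-2}\Delta$ term. The density $|v|^2$ then converges weakly, and in $\dot H^{-1}$, at rate $t^{-\alpha}$. Consequently $\phi_{v(t)}\to\Phi_\infty$ in $L^\infty_{loc}$ and $\dot H^1$, with a polynomial rate. After removing the phase $e^{i(\kappa|y|^{-1}+\lambda\Phi_\infty)\log t}$, the remaining profile is Cauchy in $L^2$. Interpolating the $t^{-2}\|\Delta v\|_{H^{-1}}$ error against the $t^{1/2}$ gradient growth and the $\dot H^{-1}$ rate is what should produce the $t^{-1/12}$ exponent.

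Radiality is preserved throughout, so $U_\infty$ is radial. Strong convergence gives $\|U_\infty\|_2^2=\mass$ and $\Phi_\infty=\phi_{U_\infty}$.

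\emph{Main obstacle.} I expect the endpoint $|\lambda|\mass=2\kappa$ to be the hardest part. There the monotone quantity gives no decay of $\int|x|^{-1}|u|^2$, and the origin singularity of $V$ must be handled with only $t^{1/2}$ gradient growth. My first attempt would be a Hardy/Kato inequality, $\int|y|^{-1}|v|^2\le\|v\|_2\|\nabla v\|_2$, combined with localizing away from $y=0$. The phase $\kappa\log t/|y|$ is only used where $|y|\gtrsim t^{-\beta}$, and the mass near the origin is shown to be $O(t^{-\gamma})$. Optimizing $\beta,\gamma$ should again yield $1/12$.

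\emph{Standing waves.} For $\mass>2\kappa/|\lambda|$, minimize the energy $\mathbf E$ over radial $f\in H^1$ with $\|f\|_2^2=\mass$.

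First I would show the infimum is finite. Hardy/Hartree bounds give $\dob[f]\lesssim\mass^{3/2}\|\nabla f\|_2$, which is subquadratic in $\|\nabla f\|_2$.

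Next I would show the infimum is strictly negative. Take a radial profile $g$ and dilate it by $f_R=R^{-3/2}g(\cdot/R)$. The $|x|^{-1}$-homogeneous terms scale like $R^{-1}$ with coefficient $\tfrac\kappa2\int|x|^{-1}|g|^2-\tfrac{|\lambda|}4\dob[g]$. This coefficient is negative for a suitable thin-shell $g$, since for shells $\dob[g]\approx\mass\int|x|^{-1}|g|^2$ and $|\lambda|\mass>2\kappa$. The kinetic term scales like $R^{-2}$, so for large $R$ the energy is negative.

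Negativity prevents loss of mass to infinity. In the radial class, Strauss compactness plus the fact that $\dob$ and $\int V|f|^2$ are weakly continuous, and the negative infimum excluding vanishing and partial loss (via strict subadditivity from scaling), give a minimizer $Q$.

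The Lagrange multiplier $\omega$ satisfies $\omega\mass=-\|\nabla Q\|^2-\int V|Q|^2-\lambda\dob[Q]$, which equals $-2\mathbf E(Q)-\tfrac\lambda2\dob[Q]>0$. Finally, $xQ\in L^2$ follows from exponential decay, which holds because $\omega>0$.

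The standing wave $e^{-i\omega t}Q$ cannot satisfy \eqref{eq:modified_scattering_coulomb}. That asymptotic form would force $\|u(t)\|_{L^2(|x|<R)}\to0$ for each fixed $R$, whereas $|u(t)|=|Q|$ is time-independent.
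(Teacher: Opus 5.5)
Your scattering half has two genuine gaps; the standing-wave half is fine.

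\textbf{The convergence step.} Your convergence mechanism does not close. Suppose you drop $t^{-2}\Delta$ and treat it as an $H^{-1}$ error for $h=e^{-i\non\log t}g$. Then $\|h(t)-h(s)\|_{H^{-1}}\lesssim\int_s^\infty\tau^{-2}\|\nabla g(\tau)\|_2\bigl(1+\|\nabla\non\|_\infty\log\tau\bigr)\,d\tau\lesssim s^{-1/2}\log s$, while the only available bound is $\|h(t)\|_{H^1}\lesssim t^{1/2}$. Interpolation therefore gives $\|h(t)-h(s)\|_2^2\le\|h(t)-h(s)\|_{H^{-1}}\|h(t)-h(s)\|_{H^1}\lesssim\log s$ for $s\le t\le 2s$. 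The exponents cancel exactly, so you get no decay, let alone $t^{-1/12}$.

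Separately, $\mathcal E'\ge0$ gives no upper bound. The monotone quantity is $2\mathcal E/t$, which is exactly the paper's $\kfun$, with $\kfun'=-t^{-2}\|\nabla g\|_2^2$.

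The paper keeps the Laplacian and freezes the coefficient instead. It uses $\non(a)=\non_\infty(\max\{a,\delta\})$, which Newton's formula makes bounded and Lipschitz with constant $\lesssim\delta^{-2}$. The linear flow $\g_\delta^S$ started at time $S$ then has gradient growing only logarithmically from its initial size $\lesssim\sqrt S$. The mass-conservation identity $\|\h_\delta^S(t)-\h_\delta^S(R)\|_2^2=2\Re\langle\h_\delta^S(R)-\h_\delta^S(t),\h_\delta^S(R)\rangle$ plus one integration by parts gives $\|\h_\delta^S(t)-\h_\delta^S(R)\|_2\lesssim\sqrt{S/R}+\delta^{-2}R^{-1/2}\log R$.

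The pointwise bound $|\varphi_\g-\Phi_\infty|\lesssim a^{-2}t^{-1/2}$ makes the forcing in the equation for $\g_\delta$ of size $O(\delta^{-2}t^{-3/2})$ in $L^2$. Unitarity then gives $\|\g_\delta-\g_\delta^S\|_2\lesssim\delta^{-2}S^{-1/2}$. Balancing these against the cutoff error $\delta^{1/2}$, with $\delta=R^{-1/6}$ and $S=R^{5/6}$, is what produces $1/12$.

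\textbf{The endpoint.} Removing the cutoff needs $\sup_t\int_0^\infty a^{-1}|\g(t,a)|^2\,da<\infty$, because $\|(1-\chi_\delta)\g\|_2^2\le2\delta\int a^{-1}|\g|^2$. Below threshold this follows from $\kfun\le\kfun(1)$, since the coefficient $\kappa-|\lambda|\mass/2$ is coercive.

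At $|\lambda|\mass=2\kappa$, your Hardy/Kato idea gives only $\int_0^r|\g|^2\lesssim r\,t^{1/2}$. This is small only for $r\ll t^{-1/2}$. At that scale the cutoff and field errors, of sizes $r^{-1}t^{-3/2}$ and $r^{-2}t^{-3/2}$, are not integrable. The Lipschitz constant $r^{-2}$ of the capped phase also destroys the frozen Cauchy estimate. So no choice of $\beta,\gamma$ works.

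The missing idea is the paper's moment argument, which has three steps.
\begin{enumerate}
\item At the endpoint, $\mathcal J(t)=\int_0^\infty r|v(t,r)|^2\,dr$ is convex: $\mathcal J''=2|v'(t,0)|^2+2|\lambda|\int_0^\infty m(\mass-m)r^{-3}\,dr$, where $m$ is the local mass.
\item $\mathcal J\to\infty$. Otherwise $v(t_n)$ is precompact in $L^2$, with a full-mass limit $q$. On $q$ the double integral $\iint_{a<b}(a^{-1}-b^{-1})|q(a)|^2|q(b)|^2\,da\,db$ would vanish, since for $v(t)$ it is $\le2\kfun(1)/(|\lambda|t)$. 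This contradicts its strict positivity for nonzero functions.
\item Convexity and unboundedness give $\mathcal J(t)\ge c_0t$. Together with $\|a\g\|_2\le N_V$, this keeps a fixed amount of rescaled mass in $a\ge a_0$. The double integral in $\kfun$ then bounds $\int_0^{a_0/2}a^{-1}|\g|^2$ uniformly in time.
\end{enumerate}

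\textbf{Standing waves.} This half is fine and follows the paper: thin shells plus dilation, compactness, and rescaling to exclude partial mass loss. Your identity $\omega\mass=-2\mathbf E[Q]-\tfrac{\lambda}{2}\dob[Q]>0$ is a shorter route to $\omega>0$ than the paper's dilation identity.
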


\begin{remark}
     The exponent $1/12$ in \eqref{eq:modified_scattering_coulomb_radial} is not optimal.
\end{remark}

Theorem \ref{thm:coulomb_focusing} identifies $2\kappa/|\lambda|$ as the sharp mass threshold for modified scattering of \emph{every} radial datum in $\Sigma$ at a prescribed mass. Above this threshold, it asserts the existence of nonscattering data. The endpoint belongs to the scattering regime, but requires an additional argument because the direct coercivity supplied by the external potential degenerates at equality.

\medskip

The Coulomb problem is delicate already at the linear level. Black--Toprak--Vergara--Zou \cite{BTVZ} proved the $L^1$-to-$L^\infty$ decay rate $|t|^{-3/2}$ for radial data through a detailed analysis of the distorted Fourier representation. For nonradial data, work in preparation by Black--Toprak--Zou \cite{BTZ} establishes a dispersive bound with decay slower than the free rate $|t|^{-3/2}$. On the other hand, for the attractive Coulomb operator, \cite{GJZ} established sharp decay of order $|t|^{-1}$ on the absolutely continuous subspace. Global Strichartz estimates for the repulsive Coulomb evolution follow from Mizutani's theory of slowly decaying potentials \cite{Mizutani}.

For the nonlinear problem, one must also identify the limiting Hartree field and control the phase it generates. In the rescaled variable $y=x/t$, the external correction $\kappa|y|^{-1}\log t$ is singular at zero velocity, and differentiating it introduces weights not controlled by the energy. Our argument permits mass at arbitrarily small velocities and, in the defocusing case, arbitrary data in $\Sigma$. It obtains strong convergence from rescaled energy and density estimates, without requiring a uniform pointwise bound for the rescaled profile.

%%%%%%%%%%%%%%%%%%%%%%%%%%%%%%%%%%%%%%%
%%%%%%%%%%%%%%%%%%%%%%%%%%%%%%%%%%%%%%%
%%%%%%%%%%%%%%%%%%%%%%%%%%%%%%%%%%%%%%%

\subsection{Main results for general external potentials}
We next consider nonnegative potentials that decay faster than Coulomb at infinity and may have an inverse-square singularity at the origin. More specifically, we consider potentials satisfying
\begin{equation}\tag{A1}\label{eq:A1}
    V\in C^1(\R^3\setminus\{0\}),
\end{equation}
\begin{equation}\tag{A2}\label{eq:A2}
    V\geq0,
\end{equation}
and
\begin{equation}\tag{A3}\label{eq:A3}
    V(x)+|x||\nabla V(x)|
    \leq C_V\big(|x|^{-2}+|x|^{-p}\big),
    \qquad p\in(1,2].
\end{equation}

\begin{theorem}\label{thm:square_defocusing}
    Suppose that $V$ satisfies \eqref{eq:A1}--\eqref{eq:A3} and
    \begin{equation}\tag{D1}\label{eq:D1}
        x\cdot\nabla V(x)\leq0,
    \end{equation}
    \begin{equation}\tag{D2}\label{eq:D2}
        \big(V(x)+x\cdot\nabla V(x)\big)_+
        \leq C_V\la x\ra^{-7},
    \end{equation}
    where $\la x\ra=(1+|x|^2)^{1/2}$ and
    $s_+:=\max\{s,0\}$ for $s\in\R$. Fix $\lambda>0$. For every $u_0\in\Sigma$, \eqref{main} has a unique global solution $u\in C(\R;\Sigma)$. There exist $U_\infty\in L^2(\R^3)$ and $\Phi_\infty\in\dot H^1(\R^3)$ such that
    \begin{equation} \label{eq:modified_scattering_general}
        u(t,x)=t^{-3/2}e^{-i|x|^2/(4t)}
        e^{i\lambda\Phi_\infty(x/t)\log t}
        U_\infty(x/t)+o_{L_x^2}(1)
    \end{equation}
    as $t\to\infty$, with $\|U_\infty\|_2^2=\mass$ and $\Phi_\infty=\phi_{U_\infty}$.
\end{theorem}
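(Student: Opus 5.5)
Global well-posedness in $\Sigma$ comes from the form framework of Section \ref{sec:est}. By \eqref{eq:A2} and $\lambda>0$, the energy \eqref{energy} bounds $\|\nabla u(t)\|_2$ uniformly, and the virial identity controls $\|xu(t)\|_2$ on bounded intervals. For the asymptotics we pass to the pseudo-conformal frame. Set
\[
u(t,x)=t^{-3/2}e^{-i|x|^2/(4t)}w(t,x/t).
\]
The natural monotone quantity is the pseudo-conformal energy
\[
\mathcal E(t)=\tfrac12\|(x+2it\nabla)u\|_2^2+2t^2\!\int V|u|^2+\tfrac{\lambda}{2}t^2\dob[u].
\]
Up to normalization this equals $\tfrac12\|\nabla_y w\|_2^2+t^2\!\int V(ty)|w|^2\,dy+\tfrac{\lambda t}{4}\dob[w]$, because $\dob[u(t)]=t^{-1}\dob[w]$. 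Its derivative is a sum of two pieces. The Hartree piece is $\lambda t\,\dob[u]\le 0$ up to constants, which is the standard sign in dimension three for the Coulomb kernel. The potential piece is $\int (2V+x\cdot\nabla V)t|u|^2$ up to constants.

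We split $2V+x\cdot\nabla V=V+(V+x\cdot\nabla V)$. The part $-t\!\int V|u|^2$ has the good sign and is kept. The second part is bounded by \eqref{eq:D2}, giving $Ct\!\int\la x\ra^{-7}|u|^2$. This we control by a local smoothing / local decay estimate: $\int_1^\infty t\|\la x\ra^{-7/2}u\|_2^2\,dt<\infty$. The weight $\la x\ra^{-7}$ is chosen so that Hardy in $\Sigma$ together with the Morawetz estimate gives this bound.

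\emph{Step 1 (Morawetz).} We use the Morawetz multiplier $x/|x|$. Assumption \eqref{eq:D1} makes the potential term nonnegative, and the repulsive Hartree term also has a good sign. This yields $\int\!\int |u|^2/|x|^3$-type bounds, or at least $\int_0^\infty\!\int \la x\ra^{-3}|u|^2\,dx\,dt<\infty$.

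\emph{Step 2 (time weight).} To get the extra factor of $t$, we combine Step 1 with the uniform $\dot H^1$ bound and the pseudo-conformal growth. A bootstrap of the form $\mathcal E(t)\lesssim \log t$ first, then improved, should suffice. We expect this to be the main obstacle: the power $7$ in \eqref{eq:D2} is there precisely to trade weights for time decay, interpolating $\|\la x\ra^{-a}u\|_2\lesssim t^{-1}\|(x+2it\nabla)u\|_2$-type bounds.

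Once $\mathcal E$ is shown bounded, we have $\sup_t\|\nabla_y w\|_2<\infty$ and $t^2\!\int V(ty)|w|^2\,dy$ bounded. The field term is then handled as in the flat case. In the $y$ variable $w$ satisfies
\[
i\partial_t w=-t^{-2}\Delta_y w+V(ty)w+\lambda t^{-1}\phi_w w.
\]
We introduce the frozen-field comparison
\[
\tilde w(t)=e^{-i\lambda\Phi(t)}w(t),\qquad \Phi(t)=\int_1^t s^{-1}\phi_{w(s)}\,ds.
\]
We show that $|w|^2$ converges in a weak norm, $\dot H^{-1}$ say, at rate $t^{-\delta}$ using the continuity equation. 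It follows that $\phi_{w(t)}\to\Phi_\infty$ in $\dot H^1\cap L^\infty$.

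The potential term $V(ty)$ contributes $\int_1^\infty\|V(ty)w\|\,dt$ in a weak topology. By \eqref{eq:A3}, $V(ty)\lesssim t^{-2}|y|^{-2}+t^{-p}|y|^{-p}$ with $p>1$. Hardy plus the $\dot H^1_y$ bound therefore makes this integrable in $\dot H^{-1}$, and no external log phase appears. This gives weak convergence of $e^{-i\lambda\Phi_\infty\log t}w(t)$ to some $U_\infty$.

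Weak convergence is upgraded to strong $L^2$ convergence by conservation of mass together with the comparison argument. Testing against the frozen evolution gives $\|\tilde w(t)\|_2^2=\mass$, and the weak limit carries full mass because compactness of $|w|^2$ (from the $\dot H^1_y$ bound and tightness given by $\|xu\|$) prevents escape of mass. Finally, $\Phi_\infty=\phi_{U_\infty}$ follows from the strong convergence of $|w|^2\to|U_\infty|^2$ in $L^1$.
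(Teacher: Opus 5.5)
Your proposal has a genuine gap, and it starts with the energy step.

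\textbf{The pseudo-conformal energy.} In the pseudo-conformal law both pieces enter with a plus sign. The correctly normalized pseudo-conformal energy is $4t\kfun(t)=\|(2t\nabla+ix)u\|_2^2+4t^2\int V|u|^2+2\lambda t^2\dob[u]$, and \eqref{eq:Kprime} gives
\[
\frac{d}{dt}\big(4t\kfun(t)\big)=4t\int(2V+x\cdot\nabla V)|u|^2\,dx+2\lambda t\,\dob[u(t)].
\]
For $\lambda>0$ the Hartree term is nonnegative, because the Coulomb kernel is mass-subcritical for Hartree ($\gamma=1<2$). The $V$-term is typically nonnegative too, e.g.\ $(2-p)V$ for $V=a(x/|x|)|x|^{-p}$. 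So the piece you call good has the bad sign.

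In fact your $\mathcal E$ cannot be bounded at all. It contains a term $c\,t\,\dob[w(t)]$ with $c>0$. Since $\|yg(t)\|_2=t^{-1}\|xu(t)\|_2$ is bounded by \eqref{eq:moment_bound}, a fixed fraction of the mass stays in a fixed ball, so $\dob[w(t)]\geq c_0>0$.

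The bound $\sup_t\|\nabla_y w\|_2<\infty$ that you want to extract also cannot hold. Together with $\|yg\|_2\lesssim1$ it would make $\{g(t)\}_{t\ge1}$ precompact in $L^2$. That is incompatible with a nontrivial logarithmic phase, since the asymptotic profile has $\nabla_y$ of size $\log t$. Your field convergence in $\dot H^1\cap L^\infty$ and your compactness step rest on this bound. Moreover, $L^\infty$ convergence would not follow from $\dot H^{-1}$ convergence of the density in three dimensions anyway.

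\textbf{What the paper does instead.} It divides by $t$: $\kfun=\mathcal E/(4t)$ satisfies $\kfun'=-t^{-2}\|\nabla g\|_2^2+\int(V+x\cdot\nabla V)|u|^2$. There is no Hartree term, no time weight, and no bootstrap. One only needs $\int_1^\infty\!\int\la x\ra^{-7}|u|^2\,dx\,dt<\infty$, which follows from the Morawetz multiplier $x/\la x\ra$ via $\Delta^2\la x\ra=-15\la x\ra^{-7}$. That is where the power $7$ comes from. Note also that \eqref{eq:D2} constrains $V+x\cdot\nabla V$, the combination in $\kfun'$, not $2V+x\cdot\nabla V$. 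The outcome is only
\[
\|\nabla g\|_2\lesssim t^{1/2},\qquad \sup_t\dob[g]<\infty,\qquad \int_1^\infty t^{-2}\|\nabla g\|_2^2\,dt<\infty,\qquad \|\phi_g(t)-\Phi_\infty\|_2\lesssim t^{-1/4}.
\]

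\textbf{The passage from weak to strong convergence.} The second gap is this passage, which is the real content of the theorem. In the flat case, weak convergence of a phase-corrected profile for arbitrary $\Sigma$ data is already Hirata's result. Compactness of $|w|^2$ and the absence of mass escape only give convergence of $|h(t)|^2=|g(t)|^2$. The weak limit of $h(t)$ can still lose mass through a non-convergent residual phase. The identity $\|\tilde w(t)\|_2^2=\mass$ is just unitarity of the phase multiplier and says nothing here. With the bounds above, neither $t^{-2}\Delta g$ nor $t^{-1}(\phi_g-\Phi_\infty)g$ is integrable in $L^2$, so a genuine mechanism is required.

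The paper supplies it as follows.
\begin{enumerate}
\item It cuts off near zero velocity, with error $\|g-g_\delta\|_2^4\lesssim\delta\dob[g]$.
\item It compares $g_\delta$ with the evolution with frozen coefficient $\lambda\Phi_\infty$, through the real pairing $\Re\la g_\delta,g_\delta^S\ra$.
\item It inserts the phase-equivariant truncations $\Tone g_\delta$ and $\Ttwo g_\delta^S$, so that the merely $L^2$ field error becomes integrable.
\item It controls the discarded mass by the entropy $\ent$, which stays bounded because $|\partial_t\ent[f]|\le4t^{-2}\|\nabla f\|_2^2$.
\item It takes $L_2/L_1$ large to suppress the frozen gradient tail $\nata_\delta^S(S)$, which is bounded but need not vanish.
\item Finally, it shows that the frozen corrected profile is Cauchy for fixed $S$.
\end{enumerate}
Your proposal contains no substitute for this step.
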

Condition \eqref{eq:D1} requires $V$ to be non-increasing along each ray from the origin, so that the associated force $-\nabla V$ has a nonnegative outward radial component. The stronger inequality
\begin{equation}\tag{D2$^*$}\label{eq:D2_strong}
    V+x\cdot\nabla V\leq0
\end{equation}
would make the rescaled energy \eqref{eq:intro_K} non-increasing. Assumption \eqref{eq:D2} allows a positive error in this inequality, provided that its contribution is decaying fast enough. In particular, it holds whenever $(V+x\cdot\nabla V)_+$ is bounded and compactly supported. 

\begin{theorem}\label{thm:square_focusing}
    Suppose that $V$ satisfies \eqref{eq:A1}--\eqref{eq:A3} and, for some $0\leq b<1/2$,
    \begin{equation}\tag{F1}\label{eq:F1}
    \begin{split}
        &\int_{\R^3}\big(2V(x)+x\cdot\nabla V(x)\big)|f(x)|^2\,dx\\
        &\qquad \qquad \leq b\left(\|\nabla f\|_2^2
        +\int_{\R^3}V(x)|f(x)|^2\,dx\right),
        \qquad \forall f\in H^1(\R^3).
    \end{split}
    \end{equation}
    Fix $\lambda<0$. There exists $\varepsilon_0>0$, depending on $V$, $\lambda$, and $b$, such that, whenever
    \begin{equation}\label{eq:initial_cond}
        \|u_0\|_\Sigma\leq\varepsilon_0,
    \end{equation}
    the unique global solution $u\in C(\R;\Sigma)$ to \eqref{main} satisfies
    \begin{equation} \label{eq:modified_scattering_focusing_general}
        u(t,x)=t^{-3/2}e^{-i|x|^2/(4t)}
        e^{i\lambda\Phi_\infty(x/t)\log t}
        U_\infty(x/t)+O_{L_x^2}(\varepsilon_0 t^{-\frac{1-2b}{16}}),
    \end{equation}
    for some $U_\infty\in L^2(\R^3)$ with $\|U_\infty\|_2^2=\mass$ and $\Phi_\infty=\phi_{U_\infty}\in\dot H^1(\R^3)$. 
\end{theorem}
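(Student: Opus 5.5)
We work in the pseudo-conformal (lens) frame and follow the physical-space strategy of \cite{PX,HX}, adapted to the external potential. Write
\[
u(t,x)=t^{-3/2}e^{-i|x|^2/(4t)}w(t,x/t),\qquad y=x/t,
\]
so that $\|w(t)\|_2^2=\mass$ and $m(t,y)=|w(t,y)|^2$. The equation for $w$ reads
\[
i\partial_t w-t^{-2}\Delta_y w+V(ty)w+\lambda t^{-1}\phi_{w}w=0.
\]
We introduce the rescaled energy
\[
\kfun(t)=\tfrac12\|\nabla_y w\|_2^2+\tfrac{t^2}{2}\int V(ty)|w|^2\,dy+\tfrac{\lambda t}{4}\dob[w],
\]
which equals $t^2$ times the pseudo-conformal energy $\tfrac12\|(x+2it\nabla)u\|_2^2/(4t^2)$-type quantity. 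We also write $\mathcal{E}(t):=\tfrac12\|\nabla_yw\|_2^2+\tfrac{t^2}{2}\int V(ty)|w|^2$ for its linear part.

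\textbf{Step 1: a priori bound.} A direct computation, i.e. the pseudo-conformal virial identity, gives
\[
\frac{d}{dt}\kfun=-\frac{t}{2}\int\big(2V+x\cdot\nabla V\big)(ty)|w|^2\,dy+\frac{\lambda}{4}\dob[w].
\]
In the original variables, the potential term is $-t^{-1}$ times $t^2\int(2V+x\cdot\nabla V)|u|^2$ rescaled. By \eqref{eq:F1} applied to $f=w(t)$ at scale $t$, scaling is consistent since $V(t\cdot)t^2$ satisfies the same form inequality. This term is therefore bounded by $bt^{-1}\mathcal{E}$. Since $\lambda<0$, the Hartree term satisfies
\[
|\lambda|\dob[w]\lesssim \mass^{3/2}\|\nabla w\|_2\lesssim \varepsilon_0^{3}\mathcal{E}^{1/2}
\]
by Hardy--Littlewood--Sobolev and Gagliardo--Nirenberg. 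We also have $|\lambda|t\dob[w]\lesssim \varepsilon_0^3 t\mathcal E^{1/2}$, and smallness gives $\kfun\simeq\mathcal{E}$ up to this error. A Gronwall/bootstrap argument then yields
\[
\mathcal{E}(t)\lesssim \varepsilon_0^2 t^{b}\qquad\text{for } t\ge1.
\]
The initial bound $\mathcal E(1)\lesssim\|u_0\|_\Sigma^2$ comes from the $\Sigma$ control, conservation, and \eqref{eq:A3} with Hardy. This is where $b<1/2$ enters: $\|\nabla_y w\|_2\lesssim\varepsilon_0 t^{b/2}$ grows strictly slower than $t^{1/4}$.

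\textbf{Step 2: density and field convergence.} From the $w$-equation, the density satisfies
\[
\partial_t m=-2t^{-2}\nabla\cdot\operatorname{Im}(\bar w\nabla w),
\]
so $\|\partial_t m\|_{\dot W^{-1,1}}\lesssim t^{-2}\|\nabla w\|_2\lesssim\varepsilon_0^2t^{-2+b/2}$, which is integrable. Hence $m(t)\to m_\infty$ in a negative Sobolev norm. Interpolating with $\|m\|_{L^1}=\mass$ and $\|m\|_{L^3}\lesssim\|\nabla w\|_2^2$, we get
\[
\|\phi_{w(t)}-\Phi_\infty\|_{L^\infty\cap\dot H^1}\lesssim\varepsilon_0^2t^{-\delta}.
\]
The rate $\delta$ is a power of $1-2b$, tracking the $t^{b}$ growth against the $t^{-1}$ gain. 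We set $\Phi_\infty:=|\cdot|^{-1}*m_\infty$.

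\textbf{Step 3: comparison with a frozen-field evolution.} This is the main obstacle. We compare $w$ with
\[
\tilde w(t)=e^{i\lambda\Phi_\infty\log t}W(t),
\]
or more robustly we take $v(t):=e^{-i\lambda\Phi_\infty\log t}w(t)$, which solves
\[
i\partial_tv=t^{-2}(\text{kinetic, with }\log t\,\nabla\Phi_\infty\text{ corrections})-V(ty)v-\lambda t^{-1}(\phi_w-\Phi_\infty)v.
\]
The kinetic terms contribute $O(t^{-2}\log^2t\,\|\nabla w\|_2)$ to $\|\partial_tv\|_{H^{-1}}$, which is integrable, and the field difference contributes $O(t^{-1-\delta})$ in $L^2$. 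The potential term $V(ty)v$ is not small pointwise near $y=0$. Here we exploit that, after undoing the lens transform, $\int V(ty)|w|^2\,dy\le t^{-2}\mathcal E\lesssim t^{-2+b}$. Thus $V(ty)^{1/2}w$ is $O(t^{-1+b/2})$ in $L^2$, which makes the pairing $\langle V(ty)v,\chi\rangle$ integrable against test functions $\chi$ with $V(t\cdot)^{1/2}\chi$ controlled by Hardy.

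This gives weak convergence $v(t)\rightharpoonup U_\infty$ in $L^2$. To upgrade to strong convergence we use $\|v(t)\|_2^2=\mass$ and show $\|U_\infty\|_2^2=\mass$. Tightness at large $|y|$ follows from virial/weight bounds in $\Sigma$. Concentration near $y=0$ is excluded by $\|\nabla v\|_2\lesssim t^{b/2}\log t$, combined with the integrable weak rate via an interpolation argument at scale $t^{-\theta}$. Optimizing $\theta$ gives the quantitative rate $t^{-(1-2b)/16}$. Finally, $m_\infty=|U_\infty|^2$, so $\Phi_\infty=\phi_{U_\infty}$. Undoing the lens transform then gives \eqref{eq:modified_scattering_focusing_general}, with the $\varepsilon_0$ prefactor coming from the quadratic smallness of all the error terms.
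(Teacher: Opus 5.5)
Your proposal has a genuine gap in Step 1, and your Step 3 argument does not establish strong convergence.

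\textbf{Step 1 does not close.} The Hartree part of your $\kfun$ is $\frac{\lambda t}{4}\dob[w]$, with an explicit factor $t$. For a scattering solution, $\dob[w(t)]\to\dob[U_\infty]>0$, so this term is of order $\varepsilon_0^4t$. Your bound $\varepsilon_0^3t\,\mathcal E^{1/2}$ is no better, and either way it dominates $\varepsilon_0^2t^b$. So ``$\kfun\simeq\mathcal E$'' fails for large $t$, and $\kfun'\le bt^{-1}\mathcal E$ only returns a bound of order $t^2$, which is the trivial one.

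If you differentiate $\mathcal E$ (the paper's $\ifun/2$) directly, the Hartree contribution is $\frac{\lambda}{t}\int\nabla\phi_w\cdot\Im(\bar w\nabla w)$, bounded by $C|\lambda|t^{-1}\|w\|_3^2\,\mathcal E$. With only $\|w\|_3^2\lesssim\mass^{1/2}\|\nabla w\|_2$, you get $\mathcal E'\le bt^{-1}\mathcal E+C\varepsilon_0t^{-1}\mathcal E^{3/2}$, whose comparison solution blows up at a finite time.

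The missing idea is uniform-in-time smallness of $\|g(t)\|_3$, even though $\|\nabla g(t)\|_2$ grows. The paper obtains it from Lemma~\ref{lem:density_diff}:
\begin{itemize}
\item the flux gives a $\dot W^{-1,3/2}$ bound $s^{-1}\sup\|\nabla g\|_2^2$ on density increments over $[s,2s]$;
\item the product rule gives a $\dot W^{1,3/2}$ bound $\sup\|\nabla g\|_2^2$;
\item Gagliardo--Nirenberg combines them into an $L^{3/2}$ increment bound $\lesssim s^{-1/2}\sup\|\nabla g\|_2^2$.
\end{itemize}
Under the bootstrap $\ifun\le\varepsilon^2t^{2\alpha}$, these increments are dyadically summable if and only if $\alpha<1/4$. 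Then \eqref{eq:I_bound} and Gronwall give $\ifun\lesssim\varepsilon_0^2t^{b+C\varepsilon^2}$, which closes for $b/2<\alpha<1/4$. Your numerology $t^{b/2}\ll t^{1/4}$ is right, but this summability is its actual role. Note also that the exponent is $b+C\varepsilon^2$, not $b$.

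The same $L^3$ bound yields $\|\nabla\Phi_\infty\|_3\lesssim\varepsilon_0^2$, which your Step 3 uses silently. With only $\Phi_\infty\in\dot H^1$, the term $\log t\,w\nabla\Phi_\infty$ lies in $L^{3/2}$, not $L^2$. So your bound $\|\nabla v\|_2\lesssim t^{b/2}\log t$ is unjustified without it. Also, the potential term in $\kfun'$ enters with a plus sign; since \eqref{eq:F1} is one-sided, this matters.

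\textbf{Step 3 is not a proof as written.} Tightness in $y$ and excluding concentration at $y=0$ do not stop mass from escaping to frequencies up to order $t^{b/2}$, which your gradient bound allows. So weak convergence together with $\|v(t)\|_2^2=\mass$ does not yield $\|U_\infty\|_2^2=\mass$. You would need a quantitative dyadic frequency splitting in which the $H^{-1}$ increment rate of $v$ provably beats the growth of $\|\nabla v\|_2$. The rate $t^{-(1-2b)/16}$ is asserted rather than derived.

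The paper sidesteps this with the frozen evolution $g^S$, $g^S(S)=g(S)$. Equal masses give $\|g-g^S\|_2^2=2\mass-2\Re\la g,g^S\ra$. In $\partial_t\Re\la g,g^S\ra$ the Laplacian and the frozen field cancel exactly, leaving two integrable terms:
\begin{itemize}
\item the field error, $\lesssim t^{-1}\|\phi_g-\Phi_\infty\|_6\|g\|_3\|g^S\|_2\lesssim t^{-1-\eta}$;
\item the potential term, $\le t^{-2}\ifun(t)^{1/2}\ifun[g^S(t)](t)^{1/2}$, bounded through the quadratic form and \eqref{eq:ifun_fixed}, so $V(ty)g\in L^2$ is never needed.
\end{itemize}
Proposition~\ref{prop:focusing_cauchy_frozen} with $a_*=C|\lambda|\|\nabla\Phi_\infty\|_3\le\alpha$ and $b_*=0$ then gives $\|h^S(t)-h(S)\|_2^2\lesssim\varepsilon_0^2S^{2\alpha-1}$. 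Choosing $\alpha=(1+2b)/8$ yields the stated rate.
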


\begin{remark}
    The smallness condition in Theorem \ref{thm:square_focusing} is needed for modified scattering. Under \eqref{eq:A1}--\eqref{eq:A3}, global well-posedness in $\Sigma$ holds for arbitrary data for either sign of $\lambda$.
\end{remark}

\begin{remark}[Examples of admissible potentials]
Let $a\in C^1(\mathbb S^2)$ be nonnegative. The following examples illustrate the hypotheses of Theorems \ref{thm:square_defocusing} and \ref{thm:square_focusing}.
\begin{enumerate}[label=\textup{(\roman*)},leftmargin=*]
    \item The zero potential $V\equiv0$ satisfies assumptions \eqref{eq:A1}--\eqref{eq:A3} and \eqref{eq:D1}--\eqref{eq:D2}, as well as \eqref{eq:F1} with $b=0$. In particular, Theorem \ref{thm:square_defocusing} establishes \emph{large-data} modified scattering for the \emph{flat defocusing Hartree} equation for every $u_0\in\Sigma$, without any symmetry assumption.
    
    \item The potentials
    \[
        V(x)=a\left(\frac{x}{|x|}\right)|x|^{-p},\qquad 1<p\leq2,
    \]
    satisfy the defocusing hypotheses \eqref{eq:D1}--\eqref{eq:D2}. Since $2V+x\cdot\nabla V=(2-p)V$, they also satisfy \eqref{eq:F1} with $b=2-p$ whenever $3/2<p\leq2$. In particular, $V(x)=\kappa|x|^{-2}$ is covered by both theorems for every fixed $\kappa\geq0$, with $b=0$.

    \item The more regular potentials
    \[
        V(x)=\kappa\la x\ra^{-p},\qquad \kappa\geq0,\quad1<p\leq2,
    \]
    satisfy the defocusing hypotheses \eqref{eq:D1}--\eqref{eq:D2}: the positive part of $V+x\cdot\nabla V$ is bounded and supported in $\{|x|^2\leq(p-1)^{-1}\}$. For $3/2<p\leq2$, the identity
    \[
        2V+x\cdot\nabla V=(2-p)V+p\kappa\la x\ra^{-p-2}
    \]
    and Hardy's inequality verify \eqref{eq:F1} for sufficiently small $\kappa$.

    \item More generally, potentials of the form
    \[
        V(x)=a\left(\frac{x}{|x|}\right)|x|^{-p}(1+|x|)^{-q}, \qquad 1<p\leq2,\quad q\geq0,
    \]
    satisfy the defocusing hypotheses \eqref{eq:D1}-\eqref{eq:D2}. They satisfy \eqref{eq:F1} with $b=2-p$ when $3/2<p\leq2$, since
    \[
        2V+x\cdot\nabla V
        =\left(2-p-\frac{q|x|}{1+|x|}\right)V\leq(2-p)V.
    \]
\end{enumerate}
\end{remark}

\medskip

The inverse-square potential presents a different difficulty from Coulomb. For $V(x)=\kappa|x|^{-2}$, one has $V(ty)=\kappa t^{-2}|y|^{-2}$ along each fixed nonzero ray. Its time dependence is integrable, so it contributes no additional logarithmic phase. However, it has the same scaling as the Laplacian: after removing the free quadratic phase and passing to $y=x/t$, the two operators appear together as
\[
t^{-2}\left(-\Delta_y+\kappa|y|^{-2}\right).
\]
Thus both carry the same factor $t^{-2}$ in the rescaled equation.

Moreover, the singularity at the origin is also stronger than in the Coulomb case. Hardy's inequality gives $|x|^{-1}u\in L^2$ for $u\in H^1$, but does not in general give $|x|^{-2}u\in L^2$. For the inverse-square potential, it therefore controls the potential energy $\int V|u|^2$, while the potential term $Vu$ need not belong to $L^2$. The factor $t^{-2}$ alone does not justify estimating this term as an integrable $L^2$ error.

\medskip

For the associated linear evolution, Burq-Planchon-Stalker-Tahvildar-Zadeh \cite{BPST03,BPST04} established Strichartz estimates for inverse-square and related critical potentials, and Fanelli-Felli-Fontelos-Primo \cite{FFFP} proved the $|t|^{-3/2}$ dispersive estimate for the three-dimensional repulsive inverse-square operator. For nonnegative homogeneous inverse-square potentials, Vega–Visciglia \cite{VV07} established lower bounds complementing local smoothing estimates. Our argument obtains modified scattering for data in $\Sigma$ by controlling the singular potential through energy estimates and identifying the logarithmic phase generated by the limiting Hartree field.

\subsection{Previous work}\label{subsec:previous}
We first recall the scattering theory for the Hartree equation without an external potential, then discuss external potentials and the related Vlasov--Poisson dynamics with a repulsive point charge.

\medskip

\paragraph{\bf The flat Hartree equation.}
We first recall the scattering theory for
\[
    i\partial_tu-\Delta u +\lambda\big(|x|^{-\gamma}*|u|^2\big)u=0, \qquad0<\gamma<3.
\]
For a dispersing solution, the density spreads over the scale $x\sim t$, and the homogeneity of $|x|^{-\gamma}$ gives a nonlinear potential of order $t^{-\gamma}$ along rays, as seen in \eqref{eq:ray_field}. The exponent $\gamma=1$ therefore separates the short-range and long-range regimes for the Hartree interaction: the time integral converges for $\gamma>1$, grows logarithmically for $\gamma=1$, and grows like $t^{1-\gamma}$ for $0<\gamma<1$. For $1<\gamma<3$, Hayashi--Tsutsumi \cite{HT87} established ordinary $L^2$ scattering for defocusing data in the weighted energy space, as well as small-data scattering for either sign.

At the Coulomb exponent $\gamma=1$, Hayashi--Naumkin \cite{HN98} and Hayashi--Naumkin--Ozawa \cite{HNO98} established small-data modified scattering. Subsequent proofs use space-time resonance analysis \cite{KP11}, testing by wave packets \cite{VanHoose2024}, and a physical-space approach \cite{PX}. 

For $1/2<\gamma<1$, \cite{HN01} obtained small-data forward modified scattering with a phase correction of order $t^{1-\gamma}$. Building on the study of Vlasov--Hartree systems \cite{HX}, Huang and the author \cite{HX26} treated the range $0<\gamma\leq1/2$, where further phase corrections arise from the feedback between the action and the density. The complementary construction of modified wave operators was developed by Ginibre--Ozawa \cite{GO93} and Ginibre--Velo \cite{GV00}. Nakanishi \cite{Nakanishi02,Nakanishi02II} proved small-data asymptotic completeness for modified scattering when $1/2<\gamma\leq1$.

\medskip

\paragraph{\bf The Hartree equation with short-range external potentials.}

For integrable interaction kernels, \cite{Dietze} proved sharp decay of small, sufficiently regular solutions under dispersive assumptions on the associated linear evolution generated by $-\Delta+V$. 

For the Coulomb Hartree interaction, Oh--Pasqualotto--Tang \cite{OPT} established forward small-data modified scattering with regular external potentials. Their good commutator approach combines physical-space energy estimates and testing by wave packets. Under decay, threshold, and integrated local energy assumptions on the linear operator, they obtain sharp $t^{-3/2}$ decay and quantitative pointwise asymptotics. Their stated assumptions require an external potential that is regular at the origin and decays faster than $|x|^{-2}$ at infinity, and hence exclude both external potentials $\kappa|x|^{-1}$ and $\kappa|x|^{-2}$.

\medskip

\paragraph{\bf The Hartree equation with singular external potentials.}
For the defocusing Coulomb interaction $\gamma=1$ and a repulsive external Coulomb potential, Hayashi--Ozawa \cite{HO} proved global existence and time decay for arbitrary data that is sufficiently regular and localized. Huang \cite{Huang} subsequently constructed modified wave operators for small asymptotic data whose velocity support avoids the origin, using the global Strichartz estimates of Mizutani \cite{Mizutani}.

For inverse-square external potentials, Suzuki \cite{Suzuki13} developed an energy-method framework for the Hartree equations associated with $-\Delta+\kappa|x|^{-2}$, throughout the range $\kappa>-1/4$ in three dimensions. His subsequent works \cite{Suzuki17,Suzuki19} study ordinary scattering for defocusing Hartree interactions $|x|^{-\gamma}$ in the short-range regime $1<\gamma<3$, for data with finite energy and a spatial moment. In particular, the forward $L^2$ scattering statement for $\kappa>-1/4$ is recorded in \cite[Proposition 3.1]{Suzuki17}; see also the discussion preceding Theorem 3.4 in \cite{Suzuki19}. For the focusing interaction, \cite{CLM} proved ordinary scattering for radial $H^1$ solutions below the ground-state thresholds when $2<\gamma<3$. These inverse-square scattering results concern short-range Hartree interactions and do not include Coulomb interaction.

The variational theory of Hartree standing waves goes back to Lieb's construction and uniqueness of a Choquard minimizer \cite{Lieb} and the constrained minimization and stability framework of Cazenave--Lions \cite{CL}. These works provide the background for our construction of radial energy minimizers at prescribed mass above the Coulomb threshold.

\medskip

\paragraph{\bf Vlasov--Poisson dynamics near a point charge.} As discussed in \cite{HX,HX26}, Hartree and Vlasov equations share a common structure in their long-range dynamics. From this perspective, Vlasov–Poisson with a fixed repulsive point charge is the kinetic analogue of the Hartree equation with a repulsive Coulomb potential. Pausader–Widmayer \cite{PW} proved modified scattering in a radial setting, while Pausader–Widmayer–Yang \cite{PWY} later treated the nonradial problem, including the motion of the point charge. Their approach incorporates the external Coulomb field into the linearized transport equation and integrates the repulsive Kepler flow in action-angle coordinates. These coordinates straighten the linearized characteristics, after which field, moment, and derivative estimates control the self-consistent interaction. As in \eqref{eq:modified_scattering_coulomb}, the asymptotics contain logarithmic corrections from both the external charge and the gas.

\medskip

\paragraph{\bf Large-data scattering and modified scattering} 
Ordinary scattering for large data has been developed through weighted estimates and Morawetz identities. For defocusing NLS with \emph{short-range} nonlinearities, Tsutsumi–Yajima \cite{TY84} established $L^2$ scattering for arbitrary data in $\Sigma$ using pseudoconformal estimates. Burq–Georgiev–Tzvetkov–Visciglia \cite{BGTV23} subsequently upgraded this conclusion to $H^1$ scattering throughout the short-range mass-subcritical regime, using moment estimates.

In the unweighted energy space, Ginibre–Velo \cite{GVEnergy00} established asymptotic completeness for repulsive \emph{short-range} Hartree interactions by the Morawetz–Strauss method, including $|x|^{-\gamma}$ with $2<\gamma<3$ in three dimensions. Planchon–Vega \cite{PV09} developed bilinear virial identities extending interaction Morawetz estimates, providing a method for deriving spacetime bounds both in Euclidean space and on domains with boundaries. Ginibre–Velo \cite{GV10} established quadratic Morawetz inequalities yielding simpler proofs of energy-space scattering for defocusing NLS and Hartree equations. Visciglia \cite{Visciglia09} used interaction Morawetz estimates to prove $L^p$ decay, for arbitrary energy data in defocusing NLS, including \emph{long-range} nonlinearities. Planchon–Vega \cite{PV09,PV12} combined their bilinear virial approach with estimates for boundary terms to establish large-data energy-space scattering for suitable defocusing powers on exterior domains. 

At the \emph{long-range} threshold, Deift–Zhou \cite{DZ03} obtained explicit logarithmically corrected asymptotics for the one-dimensional defocusing cubic NLS with arbitrary weighted energy data, using inverse scattering. In arbitrary dimensions, Cazenave–Naumkin \cite{CN18} proved modified scattering for $|u|^{2/d}u$ for a special class of arbitrarily large, sufficiently regular, nonvanishing and oscillatory data, through a pseudoconformal transformation and weighted derivative estimates. For Hartree with Coulomb interaction, Hirata \cite{Hirata95} proved \emph{weak convergence} of a phase-corrected profile for arbitrary $\Sigma$ data using vector-field estimates. His phase depends on the evolving solution, and the result does not identify a logarithmic correction determined by the limiting profile.

For the complementary final-state problem, Ginibre–Velo \cite{GV00} constructed modified wave operators for long-range Hartree equations, including the Coulomb interaction, without a smallness assumption on sufficiently regular prescribed asymptotic data.

%%%%%%%%%%%%%%%%%%%%%%%%%%%%%%%%%%%%%%%%%%%%%
%%%%%%%%%%%%%%%%%%%%%%%%%%%%%%%%%%%%%%%%%%%%%
%%%%%%%%%%%%%%%%%%%%%%%%%%%%%%%%%%%%%%%%%%%%%

\subsection{Main ideas of the proof}\label{subsec:ideas}
We build on the physical-space approach of Pausader and the author \cite{PX}. For $t\geq 1$, we write $y=x/t$ and introduce the profile\footnote{This profile corresponds to the pseudoconformal change of variables, with the original time variable retained.}
\begin{equation}\label{eq:intro_g}
    g(t,y):=t^{\frac{3}{2}}e^{i\frac{t|y|^2}{4}}u(t,ty).
\end{equation}
This change of variables preserves the $L^2$ norm. The equation for the rescaled profile is
\begin{equation}\label{eq:intro_PDE_g_general}
    i\partial_tg-t^{-2}\Delta g+V(ty)g =-\frac{\lambda}{t}\phi_g g,
\end{equation}
while for the Coulomb potential, this becomes
\begin{equation}\label{eq:intro_PDE_g_coulomb}
    i\partial_tg-t^{-2}\Delta g =-\frac1t\left(\frac{\kappa}{|y|}+\lambda\phi_g\right)g.
\end{equation}
Different from \cite{PX}, we first extract the limiting field $\Phi_\infty$ from the density, which is less sensitive to oscillations of $g$. After finding $\Phi_\infty$, we prove strong $L^2$ convergence of the phase-corrected profile $h(t)=e^{-i\non_\infty\log t}g(t)$, where $\non_\infty=\kappa/|y|+\lambda\Phi_\infty$ for Coulomb and $\non_\infty=\lambda\Phi_\infty$ for general potentials. The difficulty is to establish strong convergence of $h$ using the available energy control. $\|\nabla g(t)\|_2$ grows in time, while the limiting field has only $\dot H^1$ regularity. We first explain the common defocusing argument.

\medskip

\paragraph{\bf Rescaled energy and the limiting field.}
Real multiplication terms cancel from the density equation, giving $\partial_t|g|^2=2t^{-2}\nabla\cdot\Im(\bar g\nabla g)$. The rescaled energy and its derivative are
\begin{align}
    \kfun(t)&:=t^{-1}\|\nabla g(t)\|_2^2+t\int V(ty)|g(t,y)|^2\,dy+\frac\lambda2\dob[g(t)],\label{eq:intro_K}\\
    \kfun'(t)&=-t^{-2}\|\nabla g(t)\|_2^2+\int(V+x\cdot\nabla V)|u(t,x)|^2\,dx.\label{eq:intro_Kprime}
\end{align}
The last term in \eqref{eq:intro_Kprime} vanishes for the Coulomb potential, while for general potentials, its positive part is integrable in time by the Morawetz estimate and \eqref{eq:D2}. Defocusing positivity then yields
\begin{equation}\label{eq:K_bounds}
    \sup_{t\geq1}\kfun(t)<\infty,   \qquad \|\nabla g(t)\|_2\lesssim t^{1/2},     \qquad \int_1^\infty t^{-2}\|\nabla g(t)\|_2^2\,dt<\infty.
\end{equation}
The density equation then makes $\|\partial_t\phi_g(t)\|_2\lesssim t^{-5/4}$, while the interaction energy $\dob[g(t)]$ bounds $\|\nabla\phi_g(t)\|_2$. Consequently, there exists a nonnegative limiting field $\Phi_\infty\in\dot H^1(\R^3)$ with
\begin{equation}\label{eq:intro_field_limit}
    \|\phi_g(t)-\Phi_\infty\|_2\lesssim t^{-1/4}.
\end{equation}
Only the field difference needs to belong to $L^2$. This determines the candidate phase correction. Identifying $\Phi_\infty$ with $\phi_{U_\infty}$ will follow from strong convergence of the modified profile.

\medskip

\paragraph{\bf Comparison with a frozen evolution} 
The available bounds do not make either $t^{-2}\Delta g$ or $t^{-1}(\phi_g-\Phi_\infty)g$ integrable in $L^2$. We instead consider an auxiliary evolution where we freeze its Hartree field. 

For both defocusing problems, choose $0\leq\chi_\delta\leq1$, vanishing on $|y|\leq\delta$ and equal to one on $|y|\geq2\delta$, and set 
\begin{equation}
    g_\delta=\chi_\delta g,\qquad
    \non=\begin{cases}
        \displaystyle\frac\kappa{\max\{|y|,\delta\}}+\lambda\Phi_\infty,&\text{Coulomb},\\[4pt]
        \lambda\Phi_\infty,&\text{general potentials}.
    \end{cases}
\end{equation}
Thus $\non=\non_\infty$ on the support of $g_\delta$ and $\nabla\non\in L^2$. For general potentials, localization places $V(ty)g_\delta$ in $L^1_tL^2_y$ while the Coulomb coefficient has to stay in the frozen equation because its $t^{-1}$ factor is not integrable. The cutoff commutator $t^{-2}[\chi_\delta,\Delta]g$ is also integrable in $L^2$. The Coulomb interaction energy controls the mass near the origin, giving $\|g-g_\delta\|_2^4 \lesssim\delta\dob[g]$. Its uniform bound from \eqref{eq:K_bounds} therefore yields $\|g-g_\delta\|_2 \lesssim\delta^{1/4}$ for both defocusing problems.

Starting at a large time $S$, define
\begin{equation}\label{eq:intro_frozen}
    i\partial_tg_\delta^S-t^{-2}\Delta g_\delta^S=-t^{-1}\non g_\delta^S,
    \qquad g_\delta^S(S)=g_\delta(S).
\end{equation}
We prove that $\sup_{t\geq S}\|g_\delta(t)-g_\delta^S(t)\|_2\to0$ as $S\to\infty$, and that $h_\delta^S(t):=e^{-i\non\log t}g_\delta^S(t)$ converges for fixed $S$. The triangle inequality then gives convergence of $h_\delta:=\chi_\delta h$.

\medskip

\paragraph{\bf Entropy and two truncation levels} 
The field-error correlation contains $t^{-1}(\phi_g-\Phi_\infty) g_\delta \overline{g_\delta^S}$, whose integrability is not supplied by \eqref{eq:intro_field_limit} and conservation of mass. Smooth, phase-equivariant amplitude truncations $\smt_L$ make the product uniformly $L^2$. Their discarded mass is controlled by the entropy
\begin{equation}\label{eq:intro_entropy}
    \ent[f]:=\int|f(x)|^2\log(1+|f(x)|^2)dx,\qquad
    \|f-\smt_Lf\|_2^2\lesssim\frac{\ent[f]}{\log(1+L^2)}.
\end{equation}
For unforced profiles $f=g$ and $f=g_\delta^S$, we have the estimate $|\partial_t\ent[f]|\lesssim t^{-2}\|\nabla f\|_2^2$. For localized forced profile, we use $\ent[g_\delta]\leq \ent[g]$. Together with the gradient estimate \eqref{eq:K_bounds} and the nonnegative frozen energy, we obtain uniform entropy bounds on $g_\delta$ and $g_\delta^S$, including uniformity in the starting time $S$.

\medskip

To compare the two flows $g_\delta$ and $g_\delta^S$, we first express their squared $L^2$ distance in terms of their masses and $\Re \la g_\delta, g_\delta^S\ra$. Since they agree at time $S$ and the frozen flow conserves mass, it suffices to control the variation of $\|g_\delta(t)\|_2^2$ and the change in $\Re\la g_\delta(t),g_\delta^S(t)\ra$. The integrable forcing controls the mass variation. For the pairing, the entropy bound \eqref{eq:intro_entropy} allows us to replace it by $\Re\la \Tone g_\delta, \Ttwo g_\delta^S\ra $. The bounded amplitudes $\Tone g_\delta$ and $\Ttwo g_\delta^S$ make the $L^2$ field estimate
\eqref{eq:intro_field_limit} usable.

When differentiating this truncated pairing, phase equivariance of $\smt_L$ preserves the cancellation of the common coefficient $\non$. However, truncation disturbs the cancellation of the Laplacians. One integration by parts leaves only the first derivatives. After integration in time, these contributions are controlled by
\begin{equation}\label{eq:intro_two_truncation}
    \frac{L_2}{L_1}\nata_\delta(S)
    +\frac{L_1}{L_2}\nata_\delta^S(S),
    \qquad
    \nata_\delta(S)\longrightarrow0,\qquad
    \sup_{S\geq1}\nata_\delta^S(S)<\infty,
\end{equation}
where
\begin{equation}\label{eq:intro_nata}
    \nata_\delta(R):=\int_R^\infty t^{-2}\|\nabla g_\delta(t)\|_2^2\,dt, \qquad \nata^S_\delta(R):=\int_R^\infty t^{-2}\|\nabla g_\delta^S(t)\|_2^2\,dt,\quad\text{for } R\geq S.
\end{equation}
Since these estimates do not show that $\nata^S_\delta(S)$ converge to $0$ as $S\to \infty$, equal truncation levels would leave a part in \eqref{eq:intro_two_truncation} that need not vanish as $S\to \infty$. We therefore first take $L_1$ large to control the entropy error, then $L_2/L_1$ large to suppress the frozen tail, and finally $S$ large to control the original gradient tail and the field and forcing errors.

\medskip

We next fix $S$ and prove that $h_\delta^S$ is Cauchy in $L^2$. Mass conservation again reduces time increments to a pairing, now with the fixed value $h_\delta^S(R)$ at a later time $R$. Truncating this fixed value leads to the test $e^{i\non\log(t/R)}\smt_{L_2}g_\delta^S(R)$. Its bounded amplitude allows the spatial derivative of the phase to be controlled using only $\nabla\non\in L^2$, so one integration by parts is legitimate. Since the entropy error is small for $L_2$ large, uniformly in $S$, we keep $S$ fixed, and the remaining terms vanish as $R\to\infty$ using $\nata_\delta^S(R)\to0$.

Thus, for both defocusing problems, the order of choices is
\begin{equation}\label{eq:intro_order_pick}
    \delta\ \longrightarrow\ L_1\ \longrightarrow\ L_2/L_1\
    \longrightarrow\ S\ \longrightarrow\ R,
\end{equation}
with $\delta$ small and subsequent parameters large. 

The comparison transfers the frozen convergence to $h_\delta$. Removing the spatial cutoff gives strong $L^2$ convergence of $h$. The resulting $L^1$ convergence of densities then identifies $\Phi_\infty=\phi_{U_\infty}$.

\medskip

\paragraph{\bf Radial symmetry and the sharp threshold.} For radial Coulomb solutions with $\lambda<0$, set $\g(t,a)=\sqrt{4\pi}\,a g(t,a)$, so $\|\g(t)\|_{L^2(\dom)}^2=\mass$. Newton's formula rewrites the energy as
\begin{equation}\label{eq:intro_endpoint}
\begin{split}
    \kfun(t)&=t^{-1}\|\partial_a\mathfrak g(t)\|_2^2
    +\left(\kappa-\frac{|\lambda|\mass}{2}\right)\int_0^\infty\frac{|\mathfrak g(t,a)|^2}{a}\,da\\
    &\quad+\frac{|\lambda|}{2}\iint_{0<a<b}\left(\frac1a-\frac1b\right)
    |\mathfrak g(t,a)|^2|\mathfrak g(t,b)|^2\,da\,db.
\end{split}
\end{equation}
Below the threshold, every term is nonnegative. At equality, we prove convexity and eventual linear growth of the first radial moment $\mathcal J(t):=\int_0^\infty r|v(t,r)|^2 dr$, where
$v(t,r)=\sqrt{4\pi}\,r u(t,r)$. This forces a fixed amount of mass to remain at positive rescaled radius.
Its interaction with the inner region through the positive double integral then recovers the uniform bound on $\int_0^\infty a^{-1}|\g(t,a)|^2\,da$.

Newton's formula also improves field convergence away from the origin to $\sup_{a\geq\delta}|\phi_g(t,a)-\Phi_\infty(a)|\lesssim\delta^{-2}t^{-1/2}$. Extending the full limiting coefficient constantly across $(0,\delta)$ yields a bounded Lipschitz coefficient. Ordinary $L^2$ comparison and quantitative frozen time increments without amplitude truncation then give the scattering rate. Above the threshold, thin shells and mass-preserving dilation give negative energy. We then construct a radial energy minimizer at the prescribed mass. This then produces the claimed standing waves. 

\medskip

\paragraph{\bf Small focusing data for general potentials.} 
We replace defocusing coercivity by a bootstrap for the positive form
\[
\mathcal{I}(t):=\|\nabla g(t)\|_2^2+t^2 \int V(ty)|g|^2, \qquad \partial_t \ifun(t) \leq  \frac{b+C_\lambda\|g(t)\|_3^2}{t}\,\ifun(t),
\]
under assumption \eqref{eq:F1}. Gagliardo-Nirenberg interpolation combines the density equation's negative-order estimate with a first-derivative bound to give 
\begin{equation}
    \big\||g(t)|^2-|g(s)|^2\big\|_{3/2}
    \lesssim s^{-1/2} \sup_{\tau\in[s,t]} \|\nabla g(\tau)\|_2^2, \qquad s\leq t\leq2s.
\end{equation}
Under $\ifun(t)^{1/2}\lesssim\varepsilon t^\alpha$, these increments are dyadically summable for $\alpha<1/4$, keeping $\|g(t)\|_3$ small and closing the bootstrap when $b/2<\alpha<1/4$. The improved $L^6$ field convergence and small $L^3$ norm of $\nabla\Phi_\infty$ allow direct frozen comparison with no amplitude truncation needed. For a Coulomb potential, \eqref{eq:F1} would require $b\geq1$. This is where the present bootstrap fails to extend to the nonradial Coulomb problem.

\medskip

\paragraph{\bf Organization of the paper}
Section \ref{sec:est} establishes the rescaled identities, field estimates, and truncation tools. Section \ref{sec:comparison} proves the entropy bounds and frozen comparison, establishes both defocusing theorems, and prepares quantitative estimates for focusing problems. Section \ref{sec:coulomb} treats radial focusing Coulomb scattering and its sharp threshold. Section \ref{sec:general} proves the small-data focusing theorem for general potentials.

%%%%%%%%%%%%%%%%%%%%%%%%%%%%%%%%%%%%%%%
%%%%%%%%%%%%%%%%%%%%%%%%%%%%%%%%%%%%%%%
%%%%%%%%%%%%%%%%%%%%%%%%%%%%%%%%%%%%%%%

\subsection*{Acknowledgment}
This project grew out of discussions at the Brown--Yale PDE Seminar. The author thanks Wilhelm Schlag and Beno\^it Pausader for introducing her to the problem. She also thanks Ebru Toprak for teaching her the linear theory of $-\Delta+|x|^{-1}$ in the summer of 2024, and Wenrui Huang for explaining the papers
\cite{PW,PWY} to her.

%%%%%%%%%%%%%%%%%%%%%%%%%%%%%%%%%%%%%%%
%%%%%%%%%%%%%%%%%%%%%%%%%%%%%%%%%%%%%%%
%%%%%%%%%%%%%%%%%%%%%%%%%%%%%%%%%%%%%%%
%%%%%%%%%%%%%%%%%%%%%%%%%%%%%%%%%%%%%%%
%%%%%%%%%%%%%%%%%%%%%%%%%%%%%%%%%%%%%%%
%%%%%%%%%%%%%%%%%%%%%%%%%%%%%%%%%%%%%%%
%%%%%%%%%%%%%%%%%%%%%%%%%%%%%%%%%%%%%%%
%%%%%%%%%%%%%%%%%%%%%%%%%%%%%%%%%%%%%%%
%%%%%%%%%%%%%%%%%%%%%%%%%%%%%%%%%%%%%%%

\section{Rescaled dynamics and a priori estimates}\label{sec:est}
In this section, we collect the notation and estimates that will be used throughout the paper. After establishing global well-posedness, we derive the identities and estimates for the density, field, and entropy needed for the comparison argument in Section \ref{sec:comparison}. Throughout this section, $V$ is either the repulsive Coulomb potential $\kappa |x|^{-1}$ with $\kappa>0$, or a potential satisfying \eqref{eq:A1}--\eqref{eq:A3}. 

\subsection{Preliminaries and notation}\label{subsec:pre}
Throughout the paper, all integrals are over $\R^3$ unless otherwise specified. We use the following conventions.
\begin{itemize}
    \item We use both $\|\cdot \|_{p}$ and $\|\cdot \|_{L^p}$ for the $L^p$ norm on the spatial domain under consideration: $\R^3$ for the full-space functions and $\dom=(0,\infty)$ for the reduced radial functions. Unless otherwise specified, spatial integrals and norms are taken with respect to Lebesgue measure on the corresponding domain. The domain will be indicated explicitly whenever needed to avoid ambiguity.
    \vspace{3pt}
    \item We use the inner product $\la f,k\ra=\int f\,\overline{k}$, which is linear in the first argument.
    \vspace{3pt}
    \item We denote by $H^1$ the Sobolev space of functions in $L^2$ whose first weak derivatives belong to $L^2$, equipped with the norm $\|f\|_{H^1}^2=\|f\|_2^2+\|\nabla f\|_2^2$. On $\R^3$, $\dot H^1$ denotes the homogeneous Sobolev space, realized as $\{f\in L^6\,\big|\,\nabla f\in L^2\}$, and $H^{-1}$ denotes the dual of $H^1$. On the half-line, $H_0^1(\dom)$ denotes the subspace of $H^1(\dom)$ with zero trace at the origin. 
    \vspace{3pt}
    \item For a time interval $I$ and a Banach space $X$, $C(I;X)$ and $C^1(I;X)$ denote the spaces of continuous and continuously differentiable $X$-valued functions, respectively, with continuity and differentiation understood in the norm of $X$. We use $C^k(\Omega)$ for functions with continuous derivatives up to order $k$, and $C_c^\infty(\Omega)$ for smooth functions with compact support in $\Omega$.
    \vspace{3pt}
    \item We write $A\lesssim B$ if $A\leq CB$ for some constant $C>0$. Unless otherwise indicated, these constants depend only on the dimension and on the fixed smooth cutoffs. Any additional dependence on the datum $u_0$, $\lambda$, $V$, or the parameters $\delta,L_1,L_2,S,R$ will be displayed.
    \vspace{3pt}
    \item The finite bounds $M_V$, $N_V$ and $L_V$ may depend on $u_0$, $\lambda$ and $V$, but not on $t,\delta,L_1,L_2,S,R$. They are distinct from the structural constants $C_V$ in \eqref{eq:A3} and \eqref{eq:D2}. The form-equivalence constant $c_V$ depends only on $V$, and $C_\chi$, $C_w$ depend only on the indicated fixed functions. 
    \vspace{3pt}
    \item We retain $\Sigma$, $\phi_f$, $\dob[f]$, $\mass$, and $\mathbf E$ from Section \ref{sec:intro} in \eqref{eq:Sigma}--\eqref{energy}. In particular, $\mass=\|u_0\|_2^2$.
    \vspace{3pt}
    \item For a normed space $X$ and a positive function $a(t)$, we write $F(t)=O_X(a(t))$ if $\|F(t)\|_X\leq Ca(t)$ for all sufficiently large $t$, and $F(t)=o_X(a(t))$ if $\|F(t)\|_X/a(t)\to0$ as $t\to\infty$. 
\end{itemize}

\medskip

We use Hardy's inequality on $\R^3$, for $f\in \dot H^1(\R^3)$,
\begin{equation}\label{eq:Hardy_full}
    \left\||x|^{-1}f\right\|_{L^2(\R^3)}\leq 2\|\nabla f\|_{L^2(\R^3)},
\end{equation}
and on $\dom$, for $f\in H_0^1(\dom)$,
\begin{equation}\label{eq:Hardy_half_line}
    \left\|r^{-1}f\right\|_{L^2(\R_+)}\leq 2\|f'\|_{L^2(\R_+)}.
\end{equation}

Writing $|\nabla|:=(-\Delta)^{1/2}$, the Riesz transforms defined as 
\begin{equation}
    \mathcal{R}_j:=\partial_j|\nabla|^{-1}, \quad j=1,2,3,
\end{equation}
satisfy
\begin{equation}
    \big\|\mathcal{R}_j f\big\|_p \lesssim_p\|f\|_p,\qquad 1<p<\infty,\,\,\,\text{for }\, f\in L^p(\R^3). 
\end{equation}

We also use the Gagliardo–Nirenberg interpolation inequality in the following form: for $1<q,r<\infty$, 
\begin{equation}\label{eq:GN}
    \|f\|_p\lesssim_{q,r} \|\nabla f\|_q^{\frac{1}{2}}\big\||\nabla|^{-1}f \big\|_r^{\frac{1}{2}},\qquad \frac{1}{p}=\frac{1}{2q}+\frac{1}{2r}
\end{equation}
whenever the two norms on the right-hand side are finite. This follows by applying the usual Gagliardo–Nirenberg inequality to $|\nabla|^{-1}f$, together with the boundedness of the Riesz transforms.

The Hardy-Littlewood-Sobolev inequality states that, for $0<\alpha<3$ and $1<p<3/(3-\alpha)$,
\begin{equation}\label{eq:hls}
      \big\||x|^{-\alpha}*f\big\|_q \lesssim_{\alpha,p}\|f\|_p,\qquad \text{for }\quad \frac1q=\frac1p+\frac{\alpha}{3}-1,\quad f\in L^p(\R^3).
\end{equation}
For $0<q<3$ and nonnegative function $f\in L^1(\R^3)\cap L^\infty(\R^3)$, splitting the integral at radius $\left(\frac{\|f\|_{1}}{\|f\|_\infty}\right)^\frac{1}{3}$ yields
\begin{equation}
    \int_{\R^3} |x|^{-q}f(x)dx\leq \left(1+\frac{4\pi}{3-q}\right)\|f\|_{1}^{1-\frac{q}{3}}\|f\|_{\infty}^\frac{q}{3},
\end{equation}
while the estimate clearly holds when $\|f\|_\infty=0$. 

We also use the Rellich compact embedding on bounded intervals:
\begin{equation}\label{eq:rellich}
    H^1((0,R))\hookrightarrow L^2((0,R)),\qquad \text{compactly for every } 0<R<\infty.
\end{equation}

\medskip

For the potentials under consideration in this paper, the nonnegative sesquilinear form
\begin{equation}
    q_V[f,g]:=\int \nabla f\cdot \overline{\nabla g}+\int Vf\bar g, \qquad \text{for }f,g\in H^1,
\end{equation}
is well-defined. Since \eqref{eq:Hardy_full} gives
\[
\int \frac{|f|^2}{|x|}dx\leq 2\|f\|_2\|\nabla f\|_2,
\]
and $|x|^{-p}\leq 1+|x|^{-2}$ for $1<p\leq 2$, we have 
\begin{align*}
    \|f\|_{H^1}^2 \leq \|f\|_2^2 +q_V[f,f]\leq c_V\|f\|_{H^1}^2,
\end{align*}
where $c_V$ can be taken as $1+\kappa$ in the Coulomb case and $1+8C_V$ for general potentials under assumptions \eqref{eq:A1}-\eqref{eq:A3}. Thus, $q_V$ is densely defined and closed. The representation theorem associates to $q_V$ a unique nonnegative self-adjoint operator $H_V$ that is characterized by 
\begin{equation}
    q_V[f,g]=\left\la H_V^{1/2}f, H_V^{1/2}g \right\ra, \qquad \mathrm{Dom}\left(H_V^{1/2}\right)=H^1.
\end{equation}
Hence, $H_V$ is the nonnegative self-adjoint realization of $-\Delta+V$ associated with the form $q_V$. For $f\in D(H_V)$, we have $H_Vf=\left(-\Delta +V\right)f$ in the distributional sense. For further details on the definition of operator $H_V$, see \cite[Chapter VI]{Kato}.

\medskip

We record the following estimates for the Coulomb field.
\begin{lemma}\label{lem:coulomb_est}
    For $f\in H^1$, we have
    \begin{equation}\label{eq:coulomb_est}
        \|\phi_f\|_\infty\leq 2\|f\|_2\|\nabla f\|_2, \qquad \|\nabla \phi_f\|_3\lesssim\|f\|_3^2,
    \end{equation}
    and 
    \begin{equation}\label{eq:dob_est}
        \dob[f]\leq 2\|f\|_2^3\|\nabla f\|_2.
    \end{equation}
    Moreover, $\phi_f\in \dot H^1$ and 
    \begin{equation}\label{eq:nabla_coulomb}
        -\Delta \phi_f=4\pi |f|^2,\qquad \|\nabla \phi_f\|_2^2=4\pi \dob[f].
    \end{equation}
    The map $f\mapsto\phi_f f$ is locally Lipschitz from $H^1$ to $H^1$
    \begin{equation}\label{eq:lipschitz}
        \|\phi_f f-\phi_g g\|_{H^1}\lesssim\left(\|f\|_{H^1}^2+\|g\|_{H^1}^2\right)\|f-g\|_{H^1}.
    \end{equation}
\end{lemma}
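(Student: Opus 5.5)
The plan is to prove each estimate directly from the kernel $|x|^{-1}$, using Hardy's inequality \eqref{eq:Hardy_full} and the Hardy--Littlewood--Sobolev inequality \eqref{eq:hls}.

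\emph{Pointwise bound.} For fixed $y$, Cauchy--Schwarz and Hardy applied to the translate $f(\cdot+y)$ give
\[
\phi_f(y)=\int\frac{|f(z)|^2}{|y-z|}\,dz\le \|f\|_2\,\big\||\cdot-y|^{-1}f\big\|_2\le 2\|f\|_2\|\nabla f\|_2 .
\]
Hence $\|\phi_f\|_\infty\le 2\|f\|_2\|\nabla f\|_2$. Integrating against $|f|^2$ gives $\dob[f]=\int\phi_f|f|^2\le\|\phi_f\|_\infty\|f\|_2^2$, which is \eqref{eq:dob_est}.

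\emph{Gradient in $L^3$.} We have $\nabla\phi_f=-\frac{x}{|x|^3}*|f|^2$, and $|\nabla\phi_f|\le|x|^{-2}*|f|^2$. Apply \eqref{eq:hls} with $\alpha=2$ and $p=3/2$; this is allowed since $3/2<3$. The exponent is then $1/q=2/3+2/3-1=1/3$, so
\[
\|\nabla\phi_f\|_3\lesssim\||f|^2\|_{3/2}=\|f\|_3^2 .
\]

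\emph{Poisson equation and $\dot H^1$.} Since $(4\pi|x|)^{-1}$ is the fundamental solution of $-\Delta$, we get $-\Delta\phi_f=4\pi|f|^2$ in distributions, using $|f|^2\in L^1\cap L^3$. Membership in $\dot H^1$ needs $\phi_f\in L^6$ and $\nabla\phi_f\in L^2$.
\begin{itemize}
\item[(i)] $L^6$: apply HLS with $\alpha=1$, $p=6/5$, which gives $\|\phi_f\|_6\lesssim\|f\|_{12/5}^2<\infty$ by Sobolev.
\item[(ii)] $L^2$ gradient: apply HLS with $\alpha=2$, $p=6/5$, giving $1/q=5/6+2/3-1=1/2$, so $\nabla\phi_f\in L^2$.
\end{itemize}
For the identity $\|\nabla\phi_f\|_2^2=4\pi\dob[f]$, I would argue in Fourier space: $\widehat{\phi_f}=4\pi|\xi|^{-2}\widehat{|f|^2}$, so
\[
\|\nabla\phi_f\|_2^2=\int|\xi|^2|\widehat{\phi_f}|^2=4\pi\int\widehat{\phi_f}\,\overline{\widehat{|f|^2}}=4\pi\int\phi_f|f|^2 .
\]
Alternatively, integrate by parts against $\phi_f$, justified by approximating $f$ in $H^1$ by Schwartz functions and using continuity of both sides.

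\emph{Lipschitz bound.} This is the only step needing some bookkeeping. Write
\[
\phi_f f-\phi_g g=\phi_f(f-g)+\phi_{f,g}\,g,\qquad \phi_{f,g}:=|x|^{-1}*(|f|^2-|g|^2)=|x|^{-1}*\big((f-g)\bar f+g\overline{(f-g)}\big).
\]
For the $L^2$ part:
\begin{itemize}
\item[(a)] $\|\phi_f(f-g)\|_2\le\|\phi_f\|_\infty\|f-g\|_2$.
\item[(b)] Apply the Cauchy--Schwarz/Hardy argument to the bilinear field: $\|\phi_{f,g}\|_\infty\lesssim(\|f\|_{H^1}+\|g\|_{H^1})\|f-g\|_{H^1}$.
\end{itemize}
For the gradient part:
\begin{itemize}
\item[(c)] $\nabla(\phi_f(f-g))$ is bounded by $\|\phi_f\|_\infty\|\nabla(f-g)\|_2+\|\nabla\phi_f\|_3\|f-g\|_6$. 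Then use $\|\nabla\phi_f\|_3\lesssim\|f\|_3^2\lesssim\|f\|_{H^1}^2$ and Sobolev $\|f-g\|_6\lesssim\|f-g\|_{H^1}$.
\item[(d)] Similarly, $\|\nabla\phi_{f,g}\|_3\lesssim\big\||x|^{-2}*|(f-g)(|f|+|g|)|\big\|_3\lesssim\|(f-g)(|f|+|g|)\|_{3/2}\le\|f-g\|_3(\|f\|_3+\|g\|_3)$, which is paired with $\|g\|_6$.
\end{itemize}
Collecting terms gives the cubic bound \eqref{eq:lipschitz}.
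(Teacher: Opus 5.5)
Your proposal is correct and follows essentially the same route as the paper. Both use Cauchy--Schwarz with a translated Hardy inequality for the $L^\infty$ bound, HLS for $\nabla\phi_f$ in $L^3$ and $L^2$ and for $\phi_f\in\dot H^1$, and the split $\phi_f(f-g)+(\phi_f-\phi_g)g$ for the Lipschitz estimate. The differences are cosmetic: you justify $\|\nabla\phi_f\|_2^2=4\pi\dob[f]$ via Plancherel or approximation and get $\phi_f\in L^6$ directly from HLS. The paper instead pairs the weak Poisson equation with $\phi_f\in L^6$ against $|f|^2\in L^{6/5}$, and gets $L^6$ from Sobolev applied to $\nabla\phi_f\in L^2$.
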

\begin{proof}
By Cauchy-Schwarz and Hardy's inequality centered at $x$, we have
\[
\phi_f(x)=\int\frac{|f(y)|^2}{|x-y|}\,dy\leq\|f\|_2\left(\int\frac{|f(y)|^2}{|x-y|^2}\,dy\right)^{1/2}\leq2\|f\|_2\|\nabla f\|_2.
\]
Moreover, since $\nabla\phi_f=(\nabla|x|^{-1})*|f|^2$ in the sense of distributions and $|\nabla|x|^{-1}|=|x|^{-2}$, the Hardy-Littlewood-Sobolev inequality in \eqref{eq:hls} gives
\[
\|\nabla\phi_f\|_3\leq\big\||x|^{-2}*|f|^2\big\|_3\lesssim\big\||f|^2\big\|_{3/2}=\|f\|_3^2.
\]
This proves \eqref{eq:coulomb_est}, and \eqref{eq:dob_est} follows from $\dob[f]=\int\phi_f|f|^2\leq\|\phi_f\|_\infty\|f\|_2^2$.

Since $f\in L^{12/5}$, another application of the Hardy-Littlewood-Sobolev inequality yields
\[
\|\phi_f\|_6\lesssim\|\nabla\phi_f\|_2\lesssim\big\||f|^2\big\|_{6/5}=\|f\|_{12/5}^2.
\]
Thus $\phi_f\in\dot H^1$. The fundamental solution identity $-\Delta(|x|^{-1})=4\pi\delta_0$ gives $-\Delta\phi_f=4\pi|f|^2$ in distributions. Pairing this identity with $\phi_f$, which is legitimate since $|f|^2\in L^{6/5}$ and $\phi_f\in\dot H^1\subset L^6$, we obtain
\[
\|\nabla\phi_f\|_2^2= 4\pi\int\phi_f|f|^2=4\pi\dob[f].
\]

Finally, using $\big||f|^2-|g|^2\big|\leq|f-g|(|f|+|g|)$, the same Hardy and Hardy-Littlewood-Sobolev estimates give
\[
\|\phi_f-\phi_g\|_\infty \leq2\big(\|f\|_2+\|g\|_2\big)\|\nabla (f-g)\|_2,\qquad \|\nabla\left(\phi_f-\phi_g\right)\|_3\lesssim\big(\|f\|_3+\|g\|_3\big)\|f-g\|_3.
\]
Writing $\phi_f f-\phi_g g=\phi_f (f-g)+(\phi_f-\phi_g) g$, the product rule, H\"older's inequality, and Sobolev embedding imply
\[
\begin{aligned}
\|\phi_f f-\phi_g g\|_{H^1}
&\lesssim \big(\|\phi_f\|_\infty+\|\nabla\phi_f\|_3\big) \|f-g\|_{H^1}+\big(\|\phi_f-\phi_g\|_\infty+ \|\nabla\left(\phi_f-\phi_g\right)\|_3\big)\|g\|_{H^1}\\
&\lesssim\big(\|f\|_{H^1}^2+\|g\|_{H^1}^2\big)\|f-g\|_{H^1}.
\end{aligned}
\]
This proves \eqref{eq:lipschitz}. 
\end{proof}

\medskip

As explained in Subsection \ref{subsec:ideas}, the external Coulomb field produces a phase correction that is singular at zero velocity. We therefore introduce a cutoff away from the origin. 

Fix a radial $\chi\in C^\infty(\R^3)$ with $0\leq\chi\leq1$, $\chi=0$ on $|x|\leq1$, and $\chi=1$ on $|x|\geq2$, and set $\chi_\delta(x)=\chi(x/\delta)$. For $f\in H^1$ and $\delta>0$, by the product rule and Hardy's inequality,
\begin{equation}\label{eq:cutoff_nabla}
    \begin{split}
        \|\nabla(\chi_\delta f)\|_2&\leq\|\nabla f\|_2+\frac{\|\nabla\chi\|_\infty}{\delta}\|f\|_{L^2(\{\delta\leq|x|\leq2\delta\})}\lesssim_\chi\|\nabla f\|_2,
    \end{split}
\end{equation}
where the bound is uniform in $\delta$. Moreover, since $0\leq1-\chi_\delta\leq1$ and $1-\chi_\delta$ vanishes for $|x|\geq2\delta$,
\begin{equation}\label{eq:tail_cutoff}
    \|(1-\chi_\delta)f \|_2^2\leq\int_{|x|\leq2\delta}|f(x)|^2\,dx\leq2\delta\int\frac{|f(x)|^2}{|x|}\,dx.
\end{equation}
We consider commutator $[\chi_\delta,\Delta]f=\chi_\delta\Delta f-\Delta(\chi_\delta f)$ and this further simplifies to
\[
[\chi_\delta,\Delta]f=-2\nabla\chi_\delta\cdot\nabla f-(\Delta\chi_\delta)f.
\]
Hence, we bound the commutator by
\begin{equation}\label{eq:commutator}
    \|[\chi_\delta,\Delta]f\|_2 \lesssim_\chi\delta^{-1}\|\nabla f\|_2.
\end{equation}

%%%%%%%%%%%%%%%%%%%%%%%%%%%%%%%%%%%%%%%
%%%%%%%%%%%%%%%%%%%%%%%%%%%%%%%%%%%%%%%
%%%%%%%%%%%%%%%%%%%%%%%%%%%%%%%%%%%%%%%

\subsection{Global well-posedness}\label{subsec:GWP}
We establish global well-posedness for all the potentials considered in Theorems \ref{thm:Coulomb_defocusing}-\ref{thm:square_focusing}, so that the subsequent content can focus on modified scattering.
\begin{proposition}\label{prop:gwp}
    Let $V$ be either the repulsive Coulomb potential $V(x)=\kappa|x|^{-1}$, with $\kappa>0$, or a potential satisfying \eqref{eq:A1}--\eqref{eq:A3}. 
    For either sign of $\lambda$ and every $u_0\in H^1$, the Cauchy problem
    \begin{equation}
    i\partial_tu-\Delta u+Vu+\lambda\phi_u u=0,\qquad u(0)=u_0,
    \end{equation}
    admits a unique global solution $u\in C(\R;H^1)\cap C^1(\R;H^{-1})$. Moreover, the solution conserves mass and energy:
    \begin{equation}\label{eq:conservation}
        \mass=\|u(t)\|_2^2,\qquad 
        \mathbf E:=\frac12\|\nabla u(t)\|_2^2+\frac12\int_{\R^3} V(x)|u(t,x)|^2\,dx +\frac{\lambda}{4}\dob[u(t)].
    \end{equation}
    If, in addition, $u_0\in\Sigma$, then $u\in C(\R;\Sigma)$ and
    \begin{equation}\label{eq:moment_bound}
         \|xu(t)\|_2 \leq\|xu_0\|_2+2|t|\sqrt{4\mathbf{E}+|\lambda|^2 \mass^3},\qquad t\in\R. 
    \end{equation}
    For every $T>0$, the solution map is continuous from $H^1$ to $C([-T,T];H^1)$, and from $\Sigma$ to $C([-T,T];\Sigma)$.
\end{proposition}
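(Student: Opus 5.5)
We would build the $H^1$ theory around the self-adjoint operator $H_V$ rather than the Laplacian. Since $H_V\geq0$ is self-adjoint with form domain $H^1$ and $\|f\|_2^2+q_V[f,f]$ is equivalent to $\|f\|_{H^1}^2$, the group $e^{-itH_V}$ is unitary on $L^2$. It is also an isometry on $H^1$ equipped with the equivalent norm $\|(1+H_V)^{1/2}f\|_2$, and it extends to $H^{-1}=(H^1)^*$.

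We then write \eqref{main} in Duhamel form,
\[
u(t)=e^{-itH_V}u_0+i\lambda\int_0^t e^{-i(t-s)H_V}\big(\phi_uu\big)(s)\,ds .
\]
Here the sign convention follows from $i\partial_tu=H_Vu-\lambda\phi_uu$ up to the orientation fixed by \eqref{main}. By the local Lipschitz estimate \eqref{eq:lipschitz} of Lemma \ref{lem:coulomb_est}, the nonlinearity maps $H^1$ to $H^1$ in a locally Lipschitz way. A Banach fixed point in $C([-T,T];H^1)$, with no Strichartz estimates needed, gives a unique local solution. The existence time depends only on $\|u_0\|_{H^1}$, and the data-to-solution map is continuous; uniqueness holds in $C(I;H^1)$. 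Since $\partial_tu=-iH_Vu+i\lambda\phi_uu$ with $H_V:H^1\to H^{-1}$ bounded, we get $u\in C^1(I;H^{-1})$. This also justifies pairing the equation with $u$ and with $\partial_tu$ in the $H^1$--$H^{-1}$ duality.

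Conservation of mass follows from $\frac{d}{dt}\|u\|_2^2=2\Re\la\partial_tu,u\ra$, because $q_V$ and $\int\phi_u|u|^2$ are real. Energy conservation is the delicate point, since $\partial_tu$ is only in $H^{-1}$ and the formal computation pairs $H^{-1}$ with $H^{-1}$. We would handle it by regularization. Take data in $D(H_V)$, where the fixed point also closes in $C(I;D(H_V))$ because the nonlinearity is locally Lipschitz on $D(H_V)$ (we would check this using the graph norm, or else use the Yosida-type smoothing $(1+\varepsilon H_V)^{-1}$). For such data, differentiate $\mathbf E$, use $\frac{d}{dt}\dob[u]=4\Re\int\phi_u\bar u\partial_tu$, and obtain $\frac{d}{dt}\mathbf E=0$. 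Then pass to the limit using continuous dependence in $H^1$ and continuity of $\dob$ on $H^1$ from \eqref{eq:dob_est}.

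Global existence follows from a uniform $H^1$ bound. When $\lambda>0$, every term of $\mathbf E$ is nonnegative. When $\lambda<0$, \eqref{eq:dob_est} and Young's inequality give
\[
\tfrac{|\lambda|}{4}\dob[u]\leq\tfrac{|\lambda|}{2}\mass^{3/2}\|\nabla u\|_2\leq\tfrac14\|\nabla u\|_2^2+\tfrac{|\lambda|^2}{4}\mass^3 ,
\]
so $\|\nabla u\|_2^2\leq4\mathbf E+|\lambda|^2\mass^3$ (using $V\geq0$). The local theory then iterates globally.

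For the weighted space we use the virial-type identity $\frac{d}{dt}\|xu\|_2^2=\pm4\Im\int\bar u\,x\cdot\nabla u$. The potential and Hartree terms drop out because they are real multipliers. This gives $\big|\frac{d}{dt}\|xu\|_2\big|\leq2\|\nabla u\|_2\leq2\sqrt{4\mathbf E+|\lambda|^2\mass^3}$, which integrates to \eqref{eq:moment_bound}.

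To justify the identity rigorously, we would use the truncated weight $e^{-\varepsilon|x|^2}|x|^2$. Its commutator with $-\Delta$ involves only first derivatives of the weight paired with $\bar u\nabla u$, and these are bounded uniformly in $\varepsilon$ by a constant times $|x|$. This gives a Gronwall-type bound uniform in $\varepsilon$; letting $\varepsilon\to0$ yields $xu\in L^\infty_{\mathrm{loc}}L^2$. Continuity in $\Sigma$ and continuous dependence in $\Sigma$ then follow from the same weighted identity applied to differences, together with the $H^1$ continuity.

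The main obstacle is the rigorous energy identity, given the singular $V$ (inverse-square). Showing that $D(H_V)$ regularity propagates, or choosing a regularization that commutes well with the nonlinearity, is the step we expect to need the most care. An alternative would be an approximation argument applying Kato's method to $(1+\varepsilon H_V)^{-1}$-regularized equations.
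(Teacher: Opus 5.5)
Your proposal is correct and follows essentially the same route as the paper: Duhamel's formula for the $H_V$-group on $H^1$ with the contraction supplied by \eqref{eq:lipschitz}, $C^1(H^{-1})$ regularity from boundedness of $H_V:H^1\to H^{-1}$, coercivity of $\mathbf E$ via \eqref{eq:dob_est} and Young's inequality for global existence, and the virial identity for \eqref{eq:moment_bound}; you also supply more justification (regularization for the energy identity, a truncated weight for the moment) than the paper's ``standard computation''. Two small corrections: from \eqref{main} one has $i\partial_tu=-H_Vu-\lambda\phi_uu$, so the propagator is $e^{itH_V}$ as in the paper (your sign flips the relative sign of the linear and Hartree terms, under which the stated $\mathbf E$ would not be the conserved energy), and what closes your Gronwall step for $w_\varepsilon=e^{-\varepsilon|x|^2}|x|^2$ is the bound $|\nabla w_\varepsilon|^2\le 4w_\varepsilon$, not merely $|\nabla w_\varepsilon|\lesssim|x|$.
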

\begin{proof}
    Since the group $e^{itH_V}$ is strongly continuous and uniformly bounded on $H^1$, the local Lipschitz estimate \eqref{eq:lipschitz} and the Duhamel formula
    \begin{equation*}
        u(t)=e^{itH_V}u_0+i\lambda\int_0^t e^{i(t-s)H_V}\phi_{u(s)}u(s)\,ds
    \end{equation*}
\end{proof}
give local well-posedness in $H^1$, with continuation as long as the $H^1$ norm remains bounded. Since $H_V:H^1\to H^{-1}$ is bounded, the equation also gives $\partial_tu\in C(I;H^{-1})$ on the maximal existence interval $I$.

A standard computation, using the reality of $V$ and $\phi_u$ and the symmetry of the Coulomb kernel, gives the conservation laws in \eqref{eq:conservation}. By \eqref{eq:dob_est}, we have
\begin{align*}
    \mathbf{E} &\,\geq\, \frac{1}{2}\|\nabla u(t)\|_2^2+\frac{1}{2}\int V|u(t)|^2-\frac{|\lambda|}{2}\mass ^\frac{3}{2}\|\nabla u(t)\|_2\\
    &\,\geq\, \frac{1}{4}\|\nabla u(t)\|_2^2+\frac{1}{2}\int V|u(t)|^2-\frac{|\lambda|^2}{4}\mass^3.
\end{align*}
Since $V\geq 0$, we have
\begin{equation}\label{eq:nabla_u_bound}
    \|\nabla u(t)\|_2^2\leq 4\mathbf{E}+|\lambda|^2\mass^3, \qquad \int V|u(t,x)|^2dx\leq 2\mathbf{E}+\frac{|\lambda|^2}{2}\mass^3.
\end{equation}
Together with mass conservation, this bounds the $H^1$ norm uniformly and proves global existence. Continuous dependence in $H^1$ on every bounded time interval follows from the local theory.

Suppose now that $u_0\in\Sigma$. Using the local mass identity
\begin{equation}\label{eq:local_mass_u}
    \partial_t |u|^2=2\nabla \cdot \Im(\bar u\nabla u), 
\end{equation}
integrating by parts and applying Cauchy–Schwarz yield
\begin{align*}
    \big|\partial_t\|xu(t)\|_2^2\big| = 4\left|\Im\int x\cdot\overline{u(t)}\nabla u(t)dx\right| \leq 4\|xu(t)\|_2\|\nabla u(t)\|_2.
\end{align*}
Integrating and using \eqref{eq:nabla_u_bound}, we obtain
\begin{equation}\label{eq:weighted_u}
    \|xu(t)\|_2\leq\|xu_0\|_2+ 2|t|\sqrt{4\mathbf{E}+|\lambda|^2 \mass^3}.
\end{equation}
The standard localized moment argument gives $u\in C(\R;\Sigma)$. The corresponding difference estimate, together with continuous dependence in $H^1$, gives continuous dependence in $\Sigma$ on bounded time intervals.

%%%%%%%%%%%%%%%%%%%%%%%%%%%%%%%%%%%%%%%
%%%%%%%%%%%%%%%%%%%%%%%%%%%%%%%%%%%%%%%
%%%%%%%%%%%%%%%%%%%%%%%%%%%%%%%%%%%%%%%

\subsection{Rescaled dynamics and energy identities}\label{subsec:kfun}
From now on, assume $u_0\in\Sigma$ and write $u=u(t)$ as the unique global solution in Proposition \ref{prop:gwp}. For $t\geq1$, as explained in Subsection \ref{subsec:ideas}, we define
\begin{equation}\label{eq:g}
g(t,y):=t^{\frac{3}{2}}e^{i\frac{t|y|^2}{4}}u(t,ty).
\end{equation}
Then $g\in C([1,\infty);\Sigma)$ and satisfies
\begin{equation}\label{eq:PDE_g_general}
    i\partial_tg-t^{-2}\Delta g+V(ty)g =-\frac{\lambda}{t}\phi_g g,
\end{equation}
whereas $g$ satisfies \eqref{eq:intro_PDE_g_coulomb} when $V$ is the Coulomb potential. The scaling relations between $u$ and $g$ are
\begin{equation}
    t^{-1}\phi_g(t,y)=\phi_u(t,ty), \qquad \dob[g(t)]=t\dob[u(t)], \qquad \|g(t)\|_2^2=\mass,
\end{equation}
and 
\begin{equation}\label{eq:Galilean}
    \|\nabla g(t)\|_2=\frac12\|(2t\nabla+ix)u(t)\|_2, \qquad \|yg(t)\|_2=t^{-1}\|xu(t)\|_2.
\end{equation}
Similarly to identity \eqref{eq:local_mass_u}, the real multiplication terms in \eqref{eq:PDE_g_general} cancel and give the density-flux relation
\begin{equation}\label{eq:density_flux_g}
    \partial_t|g|^2=2t^{-2}\nabla\cdot \Im(\bar g\nabla g).
\end{equation}
The corresponding interaction identity is
\begin{equation}\label{eq:dob_g}
    \partial_t \,\dob[g(t)]= -\frac{4}{t^2}\int \nabla \phi_g\cdot \Im(\bar g \nabla g). 
\end{equation}
This integral is absolutely convergent, since
\[
\int|\nabla\phi_g|\, |g|\, |\nabla g| \lesssim\|g\|_3^2\|\nabla g\|_2^2.
\]
This density-flux relation \eqref{eq:density_flux_g} also inspires us to define the rescaled energy
\begin{equation}\label{eq:K}
    \kfun(t):=\frac{\|\nabla g(t)\|_2^2}{t} +t\int V(ty)|g(t,y)|^2\,dy +\frac{\lambda}{2} \dob[g(t)],
\end{equation}
and, since $\kfun$ need not be positive when $\lambda<0$, we also define
\begin{equation}\label{eq:I}
    \ifun[f](t):=\|\nabla f\|_2^2+t^2\int V(ty)|f(y)|^2\,dy,
\end{equation}
and we abbreviate $\ifun[g(t)](t)$ as $\ifun(t)$ for simplicity.

\begin{lemma}\label{lem:rescaled_energy_identity}
    In integrated form, we have
    \begin{equation}\label{eq:Kprime}
        \kfun'(t)=-t^{-2}\|\nabla g(t)\|_2^2+ \int(V+x\cdot\nabla V)|u(t,x)|^2\,dx.
    \end{equation}
    We also have
    \begin{equation}\label{eq:Iprime}
        \begin{split}
            \mathcal{I}'(t)&= t\int\big(2V(ty)+ty\cdot\nabla V(ty)\big)|g(t,y)|^2\,dy+ \frac{2\lambda}{t} \int \nabla\phi_g(t,y)\cdot\Im(\overline{g}\nabla g)(t,y)\,dy.
        \end{split}
    \end{equation}
\end{lemma}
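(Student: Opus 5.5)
The plan is to differentiate each term of $\kfun$ and $\ifun$ directly, using the rescaled equation \eqref{eq:PDE_g_general}, and to verify the result first for smooth, well-localized data. The general case then follows by a density argument in integrated form. That is, I will prove
\[
\kfun(t)-\kfun(s)=\int_s^t(\text{RHS of \eqref{eq:Kprime}})\,d\tau
\]
for $u_0$ in a dense class, for instance $u_0\in\Sigma$ with $H_Vu_0$ suitably regular, so that $g\in C^1([1,\infty);L^2)$ with $\Delta g\in L^2$. I then pass to the limit using continuous dependence in $\Sigma$ from Proposition \ref{prop:gwp}. Every term on both sides is continuous in $C([s,t];\Sigma)$: the potential term is controlled by Hardy's inequality, the $\dob$ term by \eqref{eq:dob_est}, and the flux term by the bound below \eqref{eq:dob_g}.

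\textbf{Computing $\kfun'$.} I will avoid the singular commutator $[\Delta,V(ty)]$ by using the energy of $u$. By \eqref{eq:Galilean}, one has $4t^{-1}\|\nabla g\|_2^2=t^{-1}\|(2t\nabla+ix)u\|_2^2$. The scaling relations give $t\int V(ty)|g|^2\,dy=t\int V|u|^2\,dx$ and $\frac\lambda2\dob[g]=\frac{\lambda t}{2}\dob[u]$. Expanding the square gives
\[
t^{-1}\|(2t\nabla+ix)u\|_2^2=4t\|\nabla u\|_2^2+4\Im\!\int x\cdot\bar u\nabla u\,dx+t^{-1}\|xu\|_2^2 .
\]
Hence
\[
\kfun(t)=2t\mathbf E+\Im\!\int x\cdot\bar u\nabla u+\frac{\|xu\|_2^2}{4t}.
\]
By energy conservation, the first term has derivative $2\mathbf E$. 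The derivative of $\frac{1}{4t}\|xu\|_2^2$ follows from the local mass law \eqref{eq:local_mass_u}, as in the proof of Proposition \ref{prop:gwp}. For $\frac{d}{dt}\Im\int x\cdot\bar u\nabla u$ I use the standard virial identity
\[
\frac{d}{dt}\Im\int x\cdot\bar u\nabla u=-2\|\nabla u\|_2^2+\int x\cdot\nabla V|u|^2-\frac{\lambda}{2}\dob[u],
\]
where I must fix the sign conventions of \eqref{main}. The coefficient of $\dob$ comes from the degree $-1$ homogeneity of the Coulomb kernel. Collecting terms, the $\dob$ contributions cancel against $2\mathbf E$, and what is left should reorganize into $-t^{-2}\|\nabla g\|_2^2+\int(V+x\cdot\nabla V)|u|^2$. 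The check that the $t^{-1}\Im\int x\cdot\bar u\nabla u$ and $t^{-2}\|xu\|^2$ pieces recombine into $-t^{-2}\|\nabla g\|_2^2$ via \eqref{eq:Galilean} is pure bookkeeping.

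\textbf{Computing $\ifun'$.} Note that $\ifun(t)=t^2\bigl(\|\nabla u\|_2^2+\int V|u|^2\bigr)$ after recentering. More precisely, I will work directly in $g$. Writing $\ifun=\langle H(t)g,g\rangle$ with $H(t)=-\Delta+t^2V(ty)$, the rescaled equation reads $i\partial_t g=t^{-2}H(t)g-\frac{\lambda}{t}\phi_g g$. The self-adjoint part drops out, so
\[
\ifun'=\langle \partial_tH(t)\,g,g\rangle+\frac{2\lambda}{t}\Im\langle H(t)g,\phi_g g\rangle .
\]
Since $\partial_t\bigl(t^2V(ty)\bigr)=t(2V(ty)+ty\cdot\nabla V(ty))$, the first term is exactly the first term in \eqref{eq:Iprime}. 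In the second term, the potential part is real and vanishes, and $\Im\langle-\Delta g,\phi_g g\rangle=\Im\int\nabla g\cdot\nabla\phi_g\,\bar g$. Up to sign conventions, this gives $\frac{2\lambda}{t}\int\nabla\phi_g\cdot\Im(\bar g\nabla g)$.

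\textbf{Main obstacle.} The genuinely delicate step is the justification. For singular $V$, in particular $|x|^{-2}$, $Vu$ need not lie in $L^2$, so the formal pairings $\langle\partial_tH g,g\rangle$ and the virial computation are not a priori meaningful. I will handle this in two steps.
\begin{enumerate}
\item First, regularize by a truncated potential $V_\epsilon=\min\{V,\epsilon^{-1}\}$, or smooth it radially so that (A1)--(A3) hold uniformly. For $V_\epsilon$ the computations are rigorous for $H^2\cap\Sigma$ data.
\item Then let $\epsilon\to0$ in integrated form. This uses the uniform $H^1$ bounds \eqref{eq:nabla_u_bound}, which are uniform since $V_\epsilon\ge0$, and the fact that $|x\cdot\nabla V|\lesssim|x|^{-2}+|x|^{-p}$ is controlled by Hardy's inequality.
\end{enumerate}
Convergence of the regularized solutions to $u$ in $C([s,t];H^1)$ follows from a standard Duhamel/energy stability argument.

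For the virial step I will also need a localized weight $x\,\psi(|x|/R)$ and a limit $R\to\infty$ using $xu\in L^2$. This is routine.
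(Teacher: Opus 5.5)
\textbf{Comparison with the paper's proof.} Your derivation of \eqref{eq:Kprime} is the paper's own. You expand $t^{-1}\|\nabla g\|_2^2$ through \eqref{eq:Galilean} to obtain $\kfun(t)=2t\mathbf E+\Im\int x\cdot\bar u\nabla u+\|xu\|_2^2/(4t)$, and then use energy conservation, the local mass law and the virial identity \eqref{eq:weighted_momentum}, all in integrated form.

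For \eqref{eq:Iprime} you take a different route. The paper does no new computation. It writes $\ifun(t)=t\kfun(t)-\frac{\lambda t}{2}\dob[g(t)]$. Then $\kfun+t\kfun'$ supplies $t\int(2V(ty)+ty\cdot\nabla V(ty))|g|^2\,dy+\frac{\lambda}{2}\dob[g]$, the two $\dob$ terms cancel, and $-\frac{\lambda t}{2}\partial_t\dob[g]$ becomes the flux term by \eqref{eq:dob_g}. Since \eqref{eq:dob_g} uses only the density--flux relation, in which the real potential drops out, nothing beyond the justification of \eqref{eq:Kprime} is needed.

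Your direct differentiation of $\langle H(t)g,g\rangle$ is correct. The equation actually reads $i\partial_tg=-t^{-2}H(t)g-\frac{\lambda}{t}\phi_g g$, but that sign is harmless because the term cancels. This route has the merit of exhibiting the external contribution as $\langle\partial_tH(t)g,g\rangle$, with $\partial_t(t^2V(ty))=t(2V(ty)+ty\cdot\nabla V(ty))$. That is exactly the fixed-profile identity \eqref{eq:identity_deriving_ifun} that the paper exploits later.

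The price is that the cancellation $\Re\langle H(t)g,iH(t)g\rangle=0$ pairs two $H^{-1}$ objects when $V\sim|x|^{-2}$. So your route genuinely needs the regularization $V_\epsilon$ for $\ifun'$ as well, whereas the paper's reduction gets \eqref{eq:Iprime} for free once \eqref{eq:Kprime} is known. Your regularization plan is more explicit than the paper's footnote. It is sound provided you pass to the limit in the order: localized weight first, then $\epsilon\to0$ (which needs only $H^1$ convergence), then $R\to\infty$.

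Finally, your aside $\ifun=t^2\bigl(\|\nabla u\|_2^2+\int V|u|^2\bigr)$ is not correct, since the kinetic part is $\frac14\|(2t\nabla+ix)u\|_2^2$. You do not use it, so it does not affect the argument.
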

\begin{proof}
    The local mass identity \eqref{eq:local_mass_u} gives
    \begin{equation*}
        \partial_t \|xu(t)\|_2^2=-4\Im\int x\cdot \bar u\nabla u\,dx.
    \end{equation*}
    Using \eqref{main} and integration by part, we have
    \begin{equation}\label{eq:weighted_momentum}
        \begin{split}
            \partial_t \Im\int x\cdot \bar u\nabla u\,dx &= \Im\int x\cdot\big(\overline{\partial_tu}\nabla u+\bar u\nabla\partial_tu\big)\,dx \\
            &=-2\|\nabla u\|_2^2+\int x\cdot\nabla V\,|u|^2\,dx -\frac{\lambda}{2}\dob[u(t)],
        \end{split}
    \end{equation}
    where the last term is obtained by symmetrizing $\int x\cdot \nabla \phi_u|u|^2$ in $x$ and $y$\footnote{These identities are understood in integrated form. The external terms are finite by $|x||\nabla V|\leq C_V(|x|^{-2}+|x|^{-p})$ and Hardy's inequality; for Coulomb, use $|x||\nabla V|=\kappa|x|^{-1}$. The unsymmetrized Hartree integral is absolutely convergent because $xu\in L^2$, $u\in L^6$, and $|x|^{-2}*|u|^2\in L^3$. Thus the kernel symmetrization is justified by Fubini.}. 

    Expanding \eqref{eq:K} using \eqref{eq:Galilean} gives
    \begin{equation}
        \kfun(t)=2t\mathbf{E}+\frac{\|xu(t)\|_2^2}{4t}+\Im\int x\cdot \overline{u(t,x)}\nabla u(t,x)dx,
    \end{equation}
    thus yielding \eqref{eq:Kprime}. Finally, we write
    \[
    \ifun(t)=t\kfun(t)-\lambda t\frac{\dob[g]}{2},
    \]
    so \eqref{eq:dob_g} and \eqref{eq:Kprime} give \eqref{eq:Iprime}. Note that this proof uses the potential through its quadratic form and does not need $V(ty)g\in L^2$, which need not hold for inverse-square singularities.
\end{proof}

\medskip

Suppose that $\lambda>0$. For $V(x)= \kappa|x|^{-1}$, we have $V+x\cdot\nabla V=0$. For general potentials satisfying \eqref{eq:A1}--\eqref{eq:A3} and \eqref{eq:D1}--\eqref{eq:D2}, we first need to estimate the positive contribution of $V+x\cdot\nabla V$. Repeating the weighted estimate as in \eqref{eq:weighted_momentum} with weight $\la x\ra $ gives 
\begin{align*}
    -\partial_t \left(2\Im\int\frac{x}{\la x\ra}\cdot\bar u\nabla u\,dx\right) &\,=4\int \left(\frac{|\nabla u|^2}{\la x\ra} -\frac{|x\cdot\nabla u|^2}{\la x\ra^3}\right)dx-\int \Delta^2 \la x\ra |u(x)|^2\,dx\\
    &\quad -2\int \nabla \la x\ra \cdot \nabla V|u|^2dx-2\lambda\int\nabla \la x\ra \cdot \nabla \phi_u|u|^2 dx\\
    &\geq -\int \Delta^2 \la x\ra |u(x)|^2\,dx\geq 15\int \la x\ra^{-7}|u(t,x)|^2 dx.
\end{align*}
The first term is nonnegative by convexity of $\la x\ra$, and the third by \eqref{eq:D1}\footnote{For inverse-square potential, $|u|^2\nabla V$ need not be locally integrable at the origin. In the third term here, $\nabla \la x\ra =O(|x|)$ near the origin, so the external potential integral is finite by Hardy’s inequality.}. The second term uses $\Delta^2\la x\ra=-15\la x\ra^{-7}$. The last term is also nonnegative for $\lambda>0$ since symmetrization rewrites it as
\[
\lambda\iint \frac{\big(\nabla \la x\ra -\nabla \la y\ra\big)\cdot(x-y)} {|x-y|^3}|u(x)|^2 |u(y)|^2\,dx\,dy,
\]
and we can easily check 
\[
\big(\nabla \la x\ra-\nabla \la y\ra\big)\cdot(x-y)=\left(\frac1{\la x\ra}+\frac1{\la y\ra}\right) \big(\la x\ra\la y\ra-1-x\cdot y\big)\geq 0.
\]

Mass and energy conservation in \eqref{eq:conservation} bound
\begin{equation*}
    \left|2\Im \int\frac{x}{\la x\ra}\cdot\overline{u(t)}\nabla u(t)\,dx\right|\leq 2\|u(t)\|_2\|\nabla u(t)\|_2\leq 2\sqrt{2\mass\mathbf{E}}, \quad t\geq 0.
\end{equation*}
Hence, integrating in time and using \eqref{eq:D2}, we obtain
\begin{equation}\label{eq:Kprime_bound}
    \int_1^\infty\int(V+x\cdot\nabla V)_+ |u|^2\,dx\,dt \leq C_V\int_1^\infty\int\la x\ra^{-7}|u(t)|^2 dxdt \leq\frac{4\sqrt2}{15}C_V\sqrt{\mass\mathbf E}.
\end{equation}
Consequently, integrating the identity for $\kfun'$ yields
\begin{equation}
    \kfun(t)+\int_1^t s^{-2}\|\nabla g(s)\|_2^2ds\leq \kfun(1)+\frac{4\sqrt2}{15}C_V\sqrt{\mass\mathbf E}\,=: M_V<\infty, \quad t\geq 1.
\end{equation}
For the Coulomb potential, the same inequality holds with $M_V=\kfun(1)$. Since all terms in $\kfun$ are nonnegative for $\lambda>0$, we therefore obtain
\begin{equation}\label{eq:nabla_g_grow}
    \|\nabla g(t)\|_2\leq \sqrt{M_V}\,t^\frac{1}{2}, \qquad \dob[g(t)]\leq \frac{2M_V}{\lambda}, \qquad t\geq 1.
\end{equation}
and 
\begin{equation}\label{eq:integrability_nabla_g}
    \int_1^\infty s^{-2}\|\nabla g(s)\|_2^2ds\leq M_V.
\end{equation}
as in \eqref{eq:K_bounds}.

\medskip

Suppose now that $\lambda<0$ and that $V$ satisfies \eqref{eq:A1}--\eqref{eq:A3} and \eqref{eq:F1} with $0\leq b<1/2$. The negative Coulumb interaction contribution to $\kfun$ prevents us from using the preceding positivity argument. We therefore work with $\ifun$, which remains nonnegative and still controls $\|\nabla g\|_2^2$. Its evolution separates the growth allowed by \eqref{eq:F1} from the nonlinear contribution, which we will control through $\|g\|_3$. We first hold $f$ fixed to isolate the contribution from the time dependence of the potential:
\begin{equation}\label{eq:identity_deriving_ifun}
    \partial_t \ifun[f](t)=t\int\big(2V(ty)+ty\cdot\nabla V(ty)\big)|f(y)|^2\,dy.
\end{equation}
Applying \eqref{eq:identity_deriving_ifun} and \eqref{eq:F1} to $f_t(x):=t^{-3/2}f(x/t)$ and changing variables, we obtain
\begin{align*}
    \partial_t \ifun[f](t) &=t \int\big(2V(x)+x\cdot\nabla V(x)\big)|f_t(x)|^2dx\leq \frac{b}{t}\left(\|\nabla f\|_2^2+t^2\int V(ty)|f(y)|^2\,dy\right).
\end{align*}
Gronwall inequality therefore gives, for every fixed $f\in H^1$,
\begin{equation}\label{eq:ifun_fixed}
\ifun[f](t)\leq\left(\frac{t}{s}\right)^b \ifun[f](s), \qquad t\geq s\geq1.
\end{equation}
Along the solution, Lemma \ref{lem:coulomb_est}, H\"older inequality and Sobolev embedding also control the nonlinear contribution to $\ifun'$ by
\begin{align*}
    \left|\int \nabla \phi_g(t,y)\cdot\Im(\overline{g}\nabla g)(t,y)\,dy\right|\leq \|\nabla\phi_g(t)\|_3\|g(t)\|_6\|\nabla g\|_2\lesssim \|g(t)\|_3^2\,\ifun(t).
\end{align*}
Summing up, \eqref{eq:Iprime} gives
\begin{equation}\label{eq:I_bound}
    \ifun'(t)\leq \frac{b+C|\lambda|\|g(t)\|_3^2}{t}\,\ifun(t),\qquad t\geq 1.
\end{equation}
\eqref{eq:I_bound} provides the energy bound for the focusing bootstrap. Indeed, if the bootstrap supports $\|g(t)\|_3\leq\varepsilon$ , Gronwall’s inequality gives
\begin{equation}
    \ifun(t)\leq \ifun(1)t^{b+C|\lambda|\varepsilon^2}
\end{equation}
Choosing $b/2<\alpha<1/4$ and $\varepsilon$ sufficiently small makes this compatible with the bound $\mathcal I(t)\lesssim\varepsilon^2 t^{2\alpha}$ when $\ifun(1)\lesssim \varepsilon^2$. The bootstrap will be discussed in details in Section \ref{sec:general}. 

We finally estimate the initial rescaled norm. Identity \eqref{eq:g} and estimates \eqref{eq:weighted_u} and \eqref{eq:nabla_u_bound} yield
\begin{equation}
    \ifun(1)\leq 2\|\nabla u(1)\|_2^2+\int V|u(1)|^2\,dx+\frac12\|xu(1)\|_2^2\leq 6\big(4\mathbf E+|\lambda|^2\mathbf M^3\big)+\|xu_0\|_2^2.
\end{equation}
Since $\lambda<0$, the initial energy satisfies
\[
\mathbf E\leq\frac12\|\nabla u_0\|_2^2+ \frac12\int V|u_0|^2\,dx.
\]
Thus assumption \eqref{eq:A3} and \eqref{eq:Hardy_full} imply
\begin{equation}\label{eq:small_initial}
    \mass+\ifun(1)\lesssim (1+C_V)\|u_0\|_\Sigma^2+|\lambda|^2\|u_0\|_\Sigma^6.
\end{equation}
This supplies the smallness at $t=1$ used in the focusing bootstrap.

%%%%%%%%%%%%%%%%%%%%%%%%%%%%%%%%%%%%%%%
%%%%%%%%%%%%%%%%%%%%%%%%%%%%%%%%%%%%%%%
%%%%%%%%%%%%%%%%%%%%%%%%%%%%%%%%%%%%%%%

\subsection{Density estimates and the limiting Hartree field}\label{subsec:limiting_field}
We now construct a limiting Coulomb interaction field under explicit bounds on $\|\nabla g(t)\|_2$ and $\dob[g(t)]$. These bounds follow from Subsection \ref{subsec:kfun} in the defocusing case and will be established separately for the focusing problems in Sections \ref{sec:coulomb} and \ref{sec:general}. 

\begin{proposition}\label{prop:limiting_field}
    Suppose that
    \begin{equation}\label{eq:field_assumption}
        A:=\sup_{t\geq1}\frac{\|\nabla g(t)\|_2^2}{t}<\infty,\qquad B:=\sup_{t\geq1} \dob[g(t)]<\infty.
    \end{equation}
    Then there is a nonnegative $\Phi_\infty\in\dot H^1$ such that, for $t\geq1$,
    \begin{equation}\label{eq:limiting_field}
          \big\|\phi_g(t)-\Phi_\infty\big\|_2 \lesssim\mass^{\frac{1}{4}} A^{\frac{3}{4}} t^{-\frac{1}{4}},\qquad \|\nabla\Phi_\infty\|_2 \leq(4\pi B)^{\frac{1}{2}}.
    \end{equation}
    Additionally, for every $f\in H^1$,
    \begin{equation} \label{eq:limiting_field_pair_est}
        \int\Phi_\infty(x)|f(x)|^2dx =\frac{1}{4\pi}\int \nabla\Phi_\infty(x)\cdot\nabla\phi_f(x) \leq B^{\frac{1}{2}} \dob[f]^{\frac{1}{2}}.
    \end{equation}
\end{proposition}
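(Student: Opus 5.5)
We will show that $\phi_g(t)$ is Cauchy in $L^2$, with an integrable-in-time bound on $\|\partial_t\phi_g\|_2$ obtained from the density-flux relation \eqref{eq:density_flux_g}. Since $-\Delta\phi_g=4\pi|g|^2$, we have
\[
\partial_t\phi_g = |\cdot|^{-1}*\partial_t|g|^2 = 2t^{-2}\,|\cdot|^{-1}*\nabla\cdot\Im(\bar g\nabla g)=8\pi t^{-2}(-\Delta)^{-1}\nabla\cdot J,\qquad J:=\Im(\bar g\nabla g).
\]
Thus $\partial_t\phi_g=8\pi t^{-2}|\nabla|^{-1}\sum_j\mathcal R_jJ_j$ up to sign. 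By Hardy--Littlewood--Sobolev \eqref{eq:hls} (with $\alpha=2$) and Riesz boundedness,
\[
\|\partial_t\phi_g\|_2\lesssim t^{-2}\|J\|_{6/5}\leq t^{-2}\|g\|_3\|\nabla g\|_2 .
\]
We interpolate $\|g\|_3\leq\|g\|_2^{1/2}\|g\|_6^{1/2}\lesssim \mass^{1/4}\|\nabla g\|_2^{1/2}$. This gives
\[
\|\partial_t\phi_g(t)\|_2\lesssim \mass^{1/4}t^{-2}\|\nabla g(t)\|_2^{3/2}\leq \mass^{1/4}A^{3/4}t^{-5/4}.
\]
Integrating from $t$ to $\infty$ yields the Cauchy property. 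Hence $\phi_g(t)-\phi_g(1)$ converges in $L^2$, and we set $\Phi_\infty:=\lim_{t\to\infty}\phi_g(t)$, with limit taken in $\phi_g(1)+L^2$. The rate $\|\phi_g(t)-\Phi_\infty\|_2\lesssim \mass^{1/4}A^{3/4}t^{-1/4}$ follows.

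The justification needs some care: $\partial_t|g|^2$ holds only in $H^{-1}$-type senses. We will work in integrated form, pairing with Schwartz test functions $\psi$ and writing $\langle\phi_g(t)-\phi_g(s),\psi\rangle=\int_s^t\langle 2\tau^{-2}J,\nabla|x|^{-1}*\psi\rangle d\tau$. Duality against $L^2$ then gives the bound. The $C([1,\infty);\Sigma)$ regularity of $g$ makes $J\in C_tL^{6/5}$, so this is routine.

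For the regularity of $\Phi_\infty$, \eqref{eq:nabla_coulomb} gives $\|\nabla\phi_g(t)\|_2^2=4\pi\dob[g(t)]\leq4\pi B$. So $\nabla\phi_g(t)$ is bounded in $L^2$, and along a sequence it converges weakly to some $F$. The $L^2$ convergence of $\phi_g(t)$ identifies $F=\nabla\Phi_\infty$ distributionally, and lower semicontinuity gives $\|\nabla\Phi_\infty\|_2\leq(4\pi B)^{1/2}$. To place $\Phi_\infty$ in $\dot H^1=\{f\in L^6:\nabla f\in L^2\}$, note that $\phi_g(t)\in L^6$ is bounded by Sobolev. It converges weakly in $L^6$ to a limit that must agree with the $L^2_{loc}$ limit $\Phi_\infty$. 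Nonnegativity passes to the limit from $\phi_g\geq0$ via a.e.\ convergence along a subsequence.

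For \eqref{eq:limiting_field_pair_est}, take $f\in H^1$. Then $|f|^2\in L^{6/5}$ and $-\Delta\phi_f=4\pi|f|^2$, so pairing with $\Phi_\infty\in L^6$ and integrating by parts (approximate $\Phi_\infty$ in $\dot H^1$ by $C_c^\infty$ functions) gives
\[
\int\Phi_\infty|f|^2=\frac1{4\pi}\int\nabla\Phi_\infty\cdot\nabla\phi_f .
\]
Cauchy--Schwarz together with $\|\nabla\phi_f\|_2=(4\pi\dob[f])^{1/2}$ then yields the bound $\frac1{4\pi}(4\pi B)^{1/2}(4\pi\dob[f])^{1/2}=B^{1/2}\dob[f]^{1/2}$.

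The main obstacle is the $t^{-5/4}$ decay estimate for $\partial_t\phi_g$. It hinges on choosing the negative-order estimate ($|\nabla|^{-1}$ from $L^{6/5}$ to $L^2$) together with the $L^3$ interpolation, so that the gradient growth $t^{1/2}$ enters only to the power $3/2$. The remaining issue is making the identification of the limit rigorous in the homogeneous space, which is technical but standard.
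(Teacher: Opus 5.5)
Your proposal is correct and follows the paper's proof essentially step for step. It uses the same negative-order estimate $\|\partial_t\phi_g\|_2\lesssim t^{-2}\|g\|_3\|\nabla g\|_2\lesssim \mass^{1/4}A^{3/4}t^{-5/4}$ from the density-flux relation, the same weak-compactness and lower-semicontinuity argument for $\nabla\Phi_\infty$, nonnegativity via a.e.\ convergence along a subsequence, and the same pairing of the weak Poisson equation with $\Phi_\infty$. Your duality justification in integrated form and your weak-$L^6$ identification of the limit spell out two points that the paper treats only briefly.
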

\begin{proof}
    In the sense of distributions, \eqref{eq:density_flux_g} implies
    \begin{equation}
        \partial_t\phi_g=8\pi t^{-2} (-\Delta)^{-1}\nabla\cdot \Im (\bar g\nabla g).
    \end{equation}
    Hardy-Littlewood-Sobolev inequality, interpolation and Sobolev embedding imply
    \begin{equation}\label{eq:partial_t_phi}
        \begin{split}
            \big\|\partial_t\phi_g(t)\|_2 &\lesssim t^{-2}\|g(t)\|_3\|\nabla g(t)\|_2\lesssim t^{-2}\|g(t)\|_2^\frac{1}{2}\|\nabla g(t)\|_2^\frac{3}{2}\leq \mass^\frac{1}{4}A^\frac{3}{4}t^{-\frac{5}{4}}.
        \end{split}
    \end{equation}
    Hence, we have that $\phi_g(t)-\phi_g(1)$ converges in $L^2$ and we denote this limit by $\Phi_\infty-\phi_g(1)$. Note that this convergence does not assert $\Phi_\infty$ belongs to $L^2$. Moreover, integrating \eqref{eq:partial_t_phi}, we obtain
    \begin{equation*}
       \big\| \phi_g(t)-\Phi_\infty\big\|_2\leq\int_t^\infty\|\partial_s\phi_g(s)\|_2ds \lesssim\mass^\frac{1}{4}A^\frac{3}{4}t^{-\frac{1}{4}},
    \end{equation*}
    proving the first estimate in \eqref{eq:limiting_field}. In particular, $\phi_g(t)-\Phi_\infty\to 0$ in $L^2$, and therefore $\phi_g(t_n)\to \Phi_\infty$ almost everywhere along some sequence $t_n\to\infty$. Since $\phi_g(t)\geq 0$, we therefore have $\Phi_\infty \geq 0$.

    Moreover, \eqref{eq:nabla_coulomb} gives
    \begin{equation*}
        \|\nabla\phi_g(t)\|_2^2=4\pi\dob [g(t)] \leq 4\pi B.
    \end{equation*}
    Thus, up to passing to a subsequence, $\phi_g(t_n)$ converges weakly in $\dot H^1$ to $\Phi_\infty$ by uniqueness of limit. Hence, we have $\Phi_\infty\in \dot H^1$ and 
    \begin{equation*}
        \|\nabla\Phi_\infty\|_2^2\leq \liminf_{n\to\infty}\|\nabla\phi_g(t_n)\|_2^2\leq 4\pi B,
    \end{equation*}
    proving \eqref{eq:limiting_field}.

    Finnaly, for $f\in H^1$, Sobolev embedding gives $|f|^2\in L^{6/5}$ and $\Phi_\infty\in L^6$. Testing the weak equation $-\Delta\phi_f=4\pi|f|^2$ in \eqref{eq:nabla_coulomb} against $\Phi_\infty$ and applying H\"older inequality yield
    \begin{align*}
        \int\Phi_\infty(x)|f(x)|^2dx&=\frac{1}{4\pi}\int \nabla\Phi_\infty(x)\cdot\nabla\phi_f(x)dx\leq \frac{1}{4\pi}\|\nabla\Phi_\infty\|_2\|\nabla\phi_f\|_2.
    \end{align*}
    Applying \eqref{eq:limiting_field} and \eqref{eq:nabla_coulomb} proves \eqref{eq:limiting_field_pair_est}. 
\end{proof}
\begin{remark}\label{rmk:defocusing_AB}
    For the \emph{defocusing} interactions, \eqref{eq:field_assumption} has been verified. In fact, \eqref{eq:nabla_g_grow} gives
    \begin{equation}\label{eq:defocusing_AB}
        A\leq M_V,\qquad B\leq \frac{2M_V}{\lambda}. 
    \end{equation}
\end{remark}

We also record in this subsection the density estimate to be used in Section \ref{sec:general}.
\begin{lemma}\label{lem:density_diff}
    For $1\leq s\leq t\leq 2s$, we have
    \begin{equation}\label{eq:density_diff}
        \big\||g(t)|^2-|g(s)|^2\big\|_\frac{3}{2}\lesssim s^{-\frac{1}{2}}\sup_{\tau\in[s,t]}\|\nabla g(\tau)\|_2^2.
    \end{equation}
\end{lemma}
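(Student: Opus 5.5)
We plan to use the Gagliardo--Nirenberg inequality \eqref{eq:GN} in the form
\[
\|F\|_{3/2}\lesssim \|\nabla F\|_{q}^{1/2}\,\||\nabla|^{-1}F\|_{r}^{1/2},\qquad \tfrac23=\tfrac1{2q}+\tfrac1{2r},
\]
applied to the density increment $F:=|g(t)|^2-|g(s)|^2$. The natural choice is $q=1$ and $r=2$ on the level of exponents, but $q=1$ is excluded in \eqref{eq:GN}. We therefore take $q$ slightly above $1$ and $r$ below $2$, or, more simply, use $\nabla|g|^2=2\Re(\bar g\nabla g)$ with $q=6/5$ and pair it with $r=3/2$. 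Checking the exponents: $\tfrac1{2}\cdot\tfrac56+\tfrac12\cdot\tfrac23=\tfrac{5}{12}+\tfrac13=\tfrac34\neq\tfrac23$. So instead we fix the derivative side as $q=3/2$, which requires $r=3/2$ as well: $\tfrac13+\tfrac13=\tfrac23$.

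\textbf{Derivative factor.} By H\"older, Sobolev and interpolation,
\[
\|\nabla|g|^2\|_{3/2}\le 2\|g\|_6\|\nabla g\|_2\lesssim\|\nabla g\|_2^2 .
\]
The same bound holds at time $s$, so $\|\nabla F\|_{3/2}\lesssim\sup_{\tau\in[s,t]}\|\nabla g(\tau)\|_2^2$.

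\textbf{Negative-order factor.} Integrate the density-flux relation \eqref{eq:density_flux_g} in time:
\[
|\nabla|^{-1}F=2\int_s^t\tau^{-2}\,|\nabla|^{-1}\nabla\cdot\Im(\bar g\nabla g)(\tau)\,d\tau .
\]
The operator $|\nabla|^{-1}\nabla\cdot$ is a combination of Riesz transforms, hence bounded on $L^{3/2}$. Therefore
\[
\||\nabla|^{-1}F\|_{3/2}\lesssim\int_s^t\tau^{-2}\|g\|_6\|\nabla g\|_2\,d\tau\lesssim s^{-1}\sup_{[s,t]}\|\nabla g\|_2^2 ,
\]
using $t\le 2s$ so that the interval has length at most $s$. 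Here we have also used H\"older with $\tfrac23=\tfrac16+\tfrac12$ and Sobolev once more.

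\textbf{Combining.} Inserting both factors into \eqref{eq:GN} gives
\[
\|F\|_{3/2}\lesssim\Big(\sup\|\nabla g\|_2^2\Big)^{1/2}\Big(s^{-1}\sup\|\nabla g\|_2^2\Big)^{1/2}=s^{-1/2}\sup_{\tau\in[s,t]}\|\nabla g(\tau)\|_2^2 ,
\]
which is exactly \eqref{eq:density_diff}.

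\textbf{Main obstacle.} The delicate point is justification rather than the estimates. One must check that $|\nabla|^{-1}F$ is finite and well defined, so that \eqref{eq:GN} applies. The time integration of \eqref{eq:density_flux_g} holds in distributions, and its right-hand side lies in $L^1_\tau L^{3/2}$ because $g\in C([1,\infty);\Sigma)$. This identifies $|\nabla|^{-1}F$ as an $L^{3/2}$ function, so there is no ambiguity modulo constants or polynomials. Beyond this, one only needs $\nabla g\in L^2$ for the $L^6$ Sobolev bound, which holds since $g\in H^1$.
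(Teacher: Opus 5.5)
Your proposal is correct and follows the paper's proof essentially verbatim. You bound $\||\nabla|^{-1}(|g(t)|^2-|g(s)|^2)\|_{3/2}\lesssim s^{-1}\sup_{\tau\in[s,t]}\|\nabla g(\tau)\|_2^2$ through the time-integrated density-flux relation and the Riesz transforms, bound $\|\nabla(|g(t)|^2-|g(s)|^2)\|_{3/2}\lesssim\sup_{\tau\in[s,t]}\|\nabla g(\tau)\|_2^2$ by H\"older and Sobolev, and combine the two through \eqref{eq:GN} with $q=r=3/2$. The opening exploration of other exponent pairs can simply be deleted; note that $q=1$, $r=2$ gives $\tfrac34$ rather than $\tfrac23$, so it was never the natural choice.
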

\begin{proof}
   The density-flux relation \eqref{eq:density_flux_g} controls the density increment with one derivative lost. We combine this bound with the Gagliardo–Nirenberg inequality to recover an $L^\frac{3}{2}$ estimate \eqref{eq:density_diff}. \eqref{eq:density_flux_g}, boundedness of the Riesz transforms, and Sobolev embedding give 
   \begin{equation}\label{eq:density_diff_1}
       \begin{split}
           \big\||\nabla|^{-1}\left(|g(t)|^2-|g(s)|^2\right)\big\|_\frac{3}{2}\lesssim\int_s^t\tau^{-2}\|g(\tau)\|_6\|\nabla g(\tau)\|_2d\tau\lesssim s^{-1}\sup_{\tau\in[s,t]}\|\nabla g(\tau)\|_2^2.
       \end{split}
   \end{equation}
   Moreover, 
   \begin{equation}\label{eq:density_diff_2}
       \big\|\nabla\left(|g(t)|^2-|g(s)|^2\right)\big\|_\frac{3}{2}\lesssim \sup_{\tau\in[s,t]}\|g(\tau)\|_6\|\nabla g(\tau)\|_2\lesssim \sup_{\tau\in[s,t]}\|\nabla g(\tau)\|_2^2.
   \end{equation}
   Applying the Gagliardo–Nirenberg inequality \eqref{eq:GN}, we have 
   \begin{align*}
       \big\||g(t)|^2-|g(s)|^2\big\|_\frac{3}{2}&\lesssim \big\||\nabla|^{-1}\left(|g(t)|^2-|g(s)|^2\right)\big\|_\frac{3}{2}^\frac{1}{2}\big\|\nabla\left(|g(t)|^2-|g(s)|^2\right)\big\|_\frac{3}{2}^\frac{1}{2}.
   \end{align*}
   Plugging in estimates \eqref{eq:density_diff_1} and \eqref{eq:density_diff_2} proves \eqref{eq:density_diff}. 
\end{proof}

%%%%%%%%%%%%%%%%%%%%%%%%%%%%%%%%%%%%%%%
%%%%%%%%%%%%%%%%%%%%%%%%%%%%%%%%%%%%%%%
%%%%%%%%%%%%%%%%%%%%%%%%%%%%%%%%%%%%%%%

\subsection{Entropy and amplitude truncation}\label{subsec:smt}
As explained in Subsection \ref{subsec:ideas}, the available gradient bounds and $L^2$ convergence of the field differences do not suffice for a direct comparison of the original and frozen evolutions. We therefore introduce smooth amplitude truncations, together with an entropy functional that controls the discarded mass.

For $f\in H^1$, define
\begin{equation}\label{eq:entropy}
    \ent[f]:=\int |f(y)|^2 \log(1+|f(y)|^2)\,dy.
\end{equation}
Since 
\[
\log(1+r)=\int_0^r\frac{ds}{1+s} \leq\int_0^r s^{-2/3}ds =3r^{1/3},
\]
interpolation gives
\begin{equation}\label{eq:ent_well_defined}
\ent[f]\lesssim\|f\|_{\frac{8}{3}}^{\frac{8}{3}} \lesssim \|f\|_2 ^{\frac{5}{3}} \|\nabla f\|_2.
\end{equation}

\medskip

We now construct the amplitude truncations. For $L>0$, let 
\begin{equation}\label{eq:Truncation}
    T_L(z):=
    \begin{cases}
        z,\qquad &|z|\leq L,\\[4pt]
        \displaystyle L\frac{z}{|z|}, \qquad &|z|>L.
    \end{cases}
\end{equation}
We then construct the smooth truncation from $T_L$. Fix $w\in C_c^\infty(1,2)$ with $w\geq 0$ and $\int w=1$. Set
\begin{equation}\label{eq:smt}
    \smt_L(z):=\int _1^2 w(\theta)T_{\theta L}(z)\,d\theta.
\end{equation}
We prove the following properties on this smooth amplitude truncation operator.
\begin{lemma}\label{lem:smt}
    The map $\smt_L:\C\to \C$ is smooth, equals the identity on $|z|\leq L$, and satisfies
    \begin{equation}\label{eq:smt_bound_map}
        \big|\smt_L(z)\big|\leq \min\{|z|,2L\},
    \end{equation}
    and 
    \begin{equation}\label{eq:Dsmt_bound}
        \big\|D\smt_L(z)\|_{\mathrm{op}}\leq 1, \qquad \big\|D^2\smt_L(z)\|_{\mathrm{op}}\lesssim_w L^{-1}.
    \end{equation}
    Here, derivatives are taken over $\R^2\simeq\C$, and $\|\cdot\|_{\mathrm{op}}$ denotes the operator norm induced by the Euclidean norm. 
    For $\theta\in\R$,
    \begin{equation}\label{eq:smt_phase_equivariance}
        \smt_L(e^{i\theta}z)= e^{i\theta}\smt_L(z),\qquad D\smt_L(z)[iz]=i\smt_L(z).
    \end{equation}
    For $f\in H^1$,
    \begin{equation}\label{eq:smt_bdd}
         \|\smt_L f\|_p\leq\|f\|_p\quad\text{for }\,2\leq p \leq 6, \qquad \|\nabla\smt_L f\|_2\leq\|\nabla f\|_2,
    \end{equation}
    and
    \begin{equation}\label{eq:smt_tail}
         \big\|f-\smt_L f\big\|_2^2\leq\frac{\ent[f]}{\log(1+L^2)}.
    \end{equation}
\end{lemma}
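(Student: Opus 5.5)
The starting point is to write $T_L$ in polar form. For $z=re^{i\alpha}$ we have $T_L(z)=\min\{r,L\}e^{i\alpha}$, so averaging in $\theta$ gives
\[
\smt_L(z)=\psi_L(|z|)\frac{z}{|z|},\qquad \psi_L(r):=\int_1^2 w(\theta)\min\{r,\theta L\}\,d\theta ,
\]
with the convention $\smt_L(0)=0$. Several properties follow at once.

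\emph{Identity, size and equivariance.} Since $w$ is supported in $(1,2)$, we get $\psi_L(r)=r$ for $r\leq L$, so $\smt_L$ is the identity on $\{|z|\leq L\}$. Using $\min\{r,\theta L\}\leq\min\{r,2L\}$ and $\int w=1$ gives \eqref{eq:smt_bound_map}. Phase equivariance \eqref{eq:smt_phase_equivariance} is immediate from the polar form. Differentiating $\theta\mapsto\smt_L(e^{i\theta}z)$ at $\theta=0$ yields $D\smt_L(z)[iz]=i\smt_L(z)$.

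\emph{Smoothness and derivative bounds.} We have $\psi_L'(r)=\int_{r/L}^2 w(\theta)\,d\theta$. This is smooth, lies in $[0,1]$, equals $1$ for $r\leq L$, and vanishes for $r\geq 2L$. Moreover $\psi_L''(r)=-L^{-1}w(r/L)$, so $|\psi_L''|\lesssim_w L^{-1}$. Smoothness of $\smt_L$ is clear near the origin, where it is the identity, and away from the origin by the polar form. In polar coordinates the radial derivative of $\smt_L$ has size $\psi_L'(r)\leq1$ and the angular derivative has size $\psi_L(r)/r\leq1$. The two directions are orthogonal, so $\|D\smt_L\|_{\mathrm{op}}\leq1$.

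For the second derivative we only need to consider $r\geq L$, where $D^2\smt_L$ is nonzero. Its entries involve $\psi_L''$, $\psi_L'/r$ and $\psi_L/r^2$. Each is $\lesssim L^{-1}$, using $\psi_L(r)\leq 2L$ together with $r\geq L$.

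\emph{Lebesgue and gradient bounds.} Since $|\smt_L f|\leq|f|$ pointwise, $\|\smt_L f\|_p\leq\|f\|_p$ for every $p$; the range $2\le p\le 6$ just records where finiteness follows from $f\in H^1$. Next, $\smt_L$ is globally Lipschitz with constant $1$ and fixes the origin. The chain rule for Lipschitz maps composed with Sobolev functions (here $\smt_L$ is even $C^\infty$) gives $\nabla\smt_L f=D\smt_L(f)[\nabla f]$. Hence $|\nabla\smt_L f|\leq|\nabla f|$ pointwise, which gives \eqref{eq:smt_bdd}.

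\emph{Tail bound.} We have $|f-\smt_L f|=|f|-\psi_L(|f|)$. This vanishes where $|f|\leq L$ and is at most $|f|$ elsewhere. On the set $\{|f|>L\}$ we have $\log(1+|f|^2)\geq\log(1+L^2)$, so
\[
\|f-\smt_L f\|_2^2\leq\int_{\{|f|>L\}}|f|^2\leq\frac{1}{\log(1+L^2)}\int|f|^2\log(1+|f|^2)=\frac{\ent[f]}{\log(1+L^2)},
\]
which is \eqref{eq:smt_tail}.

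The only step requiring care is the second-derivative bound: we have to compute $D^2$ of the radial map $\psi_L(r)z/r$ in Cartesian form and check that every term carries a factor of $L^{-1}$ on its support $\{r\geq L\}$. This reduces to the three scalar bounds on $\psi_L''$, $\psi_L'/r$ and $\psi_L/r^2$ listed above.
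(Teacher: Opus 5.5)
Your proof is correct and follows essentially the same route as the paper: the radial form $\psi(|z|)z/|z|$, radial and tangential derivatives lying in $[0,1]$, the pointwise and chain-rule bounds, and the tail estimate on $\{|f|>L\}$. The only cosmetic difference is in the $D^2$ bound: the paper proves it for $L=1$ and transfers it by the scaling $\smt_L(z)=L\smt_1(z/L)$, whereas you bound $\psi_L''$, $\psi_L'/r$ and $\psi_L/r^2$ directly on $\{r\geq L\}$.
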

\begin{proof}
    The identity property and \eqref{eq:smt_bound_map} follow directly from the definition. For $z\neq0$, write 
    \[
    \smt_1(z)=a(|z|)\frac{z}{|z|}, \qquad  a(r):=\int_1^2 w(\theta) \min\{r,\theta\}d\theta.
    \]
    Extending $w$ by zero outside $(1,2)$, we have
    \[
    a'(r)=\int_r^\infty w(\theta) d\theta, \qquad a''(r)=-w(r).
    \]
    Thus $a$ is smooth, equals $r$ for $r\leq1$, and is constant for $r\geq2$. In particular, $\smt_1$ is smooth, including at the origin. The radial and tangential eigenvalues of $D\smt_1(z)$ are $a'(|z|)$ and $a(|z|)/|z|$, respectively, and both belong to $[0,1]$. Hence $\|D\smt_1(z)\|_{\mathrm{op}}\leq1$. 
    Moreover, $D^2\smt_1=0$ on $|z|\leq1$, is bounded on $1\leq|z|\leq2$, and is $O_w(|z|^{-2})$ for $|z|\geq2$. The scaling identity $\smt_L(z)=L\smt_1(z/L)$ therefore gives \eqref{eq:Dsmt_bound}. 

    Phase equivariance follows from the radial form of $\smt_L$. Differentiating this identity in $\theta$ at $\theta=0$ yields \eqref{eq:smt_phase_equivariance}.
    
    The $L^p$ bound in \eqref{eq:smt_bdd} follows from \eqref{eq:smt_bound_map}, while the Sobolev chain rule gives
    \begin{align*}
        \big|\nabla\smt_L f\big|^2 =\sum_{j=1}^3 \big|D\smt_L(f)[\partial_jf]\big|^2 \leq |\nabla f|^2,
    \end{align*}
    proving \eqref{eq:smt_bdd}. 
    
    Finally, $\smt_L(z)$ is a nonnegative multiple of $z$, with modulus at most $|z|$, and equals $z$ for $|z|\leq L$. Consequently,
    \begin{align*}
        \|f-\smt_L f\|_2^2\leq \int_{\{|f|>L\}}|f(y)|^2dy \leq\frac{\ent[f]}{\log(1+L^2)}.
    \end{align*}
\end{proof}

%%%%%%%%%%%%%%%%%%%%%%%%%%%%%%%%%%%%%%%
%%%%%%%%%%%%%%%%%%%%%%%%%%%%%%%%%%%%%%%
%%%%%%%%%%%%%%%%%%%%%%%%%%%%%%%%%%%%%%%
%%%%%%%%%%%%%%%%%%%%%%%%%%%%%%%%%%%%%%%
%%%%%%%%%%%%%%%%%%%%%%%%%%%%%%%%%%%%%%%
%%%%%%%%%%%%%%%%%%%%%%%%%%%%%%%%%%%%%%%

\section{Comparison with the frozen evolution}\label{sec:comparison}
In this section, we prove the comparison and convergence estimates described in Subsection \ref{subsec:ideas} for the convergence of the corrected profiles. The first three subsections develop the comparison argument and apply it to the \emph{defocusing} problems, using the energy bounds and limiting field from Section \ref{sec:est}. The last subsection prepares the focusing arguments by giving quantitative estimates for more regular fields, with \emph{no} sign condition.

\subsection{Frozen evolutions and entropy bounds}\label{subsec:frozen_evolution}

The limiting field determines the correction phase, but convergence of the field alone does not give convergence of the profile. We therefore freeze the field and consider the corresponding evolution. 

Define the modified profile as
\begin{equation}\label{eq:h}
    h(t)=e^{-i\non_\infty\log t}g(t),\qquad
    \non_\infty(y)=
    \begin{cases}
        \displaystyle \frac{\kappa}{|y|}+\lambda\Phi_\infty, \quad &\text{Coulomb potential,}\\[10pt]
        \lambda \Phi_\infty, \quad &\text{general potentials}.
    \end{cases}
\end{equation}

For both problems, we use the spatial cutoff from Subsection \ref{subsec:pre} and define
\begin{equation}\label{eq:profile_cutoff}
    g_\delta=\chi_\delta g,\qquad h_\delta=\chi_\delta h,\qquad 0<\delta<1.
\end{equation}
The cutoff avoids the Coulomb singularity on the support of $g_\delta$. Although unnecessary for general potentials, we also use it for general potentials so that we work with the same localized profiles. For general potentials, the cutoff makes $V(ty)g_\delta$ an $L^1_tL^2_y$ forcing term on $[1,\infty)$. The Coulomb term, however, has the nonintegrable time factor $t^{-1}$ and remains part of the frozen coefficient. Since $g_\delta$ vanishes on $|y|\leq\delta$, we cap the Coulomb term near the origin without changing its product with $g_\delta$, and define
\begin{equation}\label{eq:non_coulomb}
    \non_{\infty,\delta}(y)=\frac{\kappa}{\max\{|y|,\delta\}}+\lambda\Phi_\infty(y).
\end{equation}
Throughout Subsections \ref{subsec:frozen_evolution}--\ref{subsec:convergence_frozen}, $\non$ denotes the frozen coefficient:
\begin{equation}\label{eq:frozen_coefficient}
    \non=\non_{\infty,\delta}\quad\text{for Coulomb,}\qquad \non=\non_\infty\quad\text{for general potentials}.
\end{equation}
It agrees with $\non_\infty$ on the support of $g_\delta$, so $h_\delta=e^{-i\non\log t}g_\delta$. Moreover, for each $\delta>0$, the capped Coulomb coefficient belongs to $L^6$ and has an $L^2$ weak gradient. Together with $\Phi_\infty\in\dot H^1\subset L^6$, this gives $\non\in L^6$ and $\nabla\non\in L^2$ for both the Coulomb and general potentials.

\medskip

For $S\geq 1$, define the frozen evolution and its corrected profile by
\begin{equation}\label{eq:frozen_PDE}
    i\partial_t g_\delta^S-t^{-2}\Delta g_\delta^S=-t^{-1}\non g_\delta^S,\qquad g_\delta^S(S)=g_\delta(S),
\end{equation}
and 
\begin{equation}\label{eq:h_cutoff}
    h_\delta^S(t)=e^{-i\non\log(t)}g_\delta^S(t).
\end{equation}
The frozen solution can spread into $|y|<\delta$, so its equation uses $\non$ on the whole space. The localized profile satisfies
\begin{equation}\label{eq:PDE_g_cutoff}
    i\partial_t g_\delta-t^{-2}\Delta g_\delta=-t^{-1}\non g_\delta-\lambda t^{-1}\left(\phi_g-\Phi_\infty\right)g_\delta+\mathcal R_\delta,
\end{equation}
where, with the convention in \eqref{eq:commutator},
\begin{equation}\label{eq:comparison_forcing}
    \mathcal R_\delta=
    \begin{cases}
        t^{-2}[\chi_\delta,\Delta]g,\quad & \text{Coulomb potential},\\
        t^{-2}[\chi_\delta,\Delta]g-V(ty)g_\delta,\quad &\text{general potentials}.
    \end{cases}
\end{equation}
For fixed $\delta$, this forcing has a small tail. Indeed, \eqref{eq:commutator} and the bound $A$ in \eqref{eq:field_assumption} give
\begin{equation}\label{eq:cutoff_forcing_tail}
    \int_S^\infty t^{-2}\|[\chi_\delta,\Delta]g(t)\|_2\,dt
    \lesssim_\chi\delta^{-1}\sqrt A\,S^{-1/2}.
\end{equation}
For general potentials, \eqref{eq:A3} and $g_\delta=0$ on $|y|\leq\delta$ also give
\begin{equation}\label{eq:external_forcing_tail}
    \int_S^\infty\|V(ty)g_\delta(t)\|_2\,dt
    \leq C_V\sqrt\mass\left(\delta^{-2}S^{-1}
    +\frac{\delta^{-p}}{p-1}S^{1-p}\right).
\end{equation}
Thus $\mathcal R_\delta\in L^1([1,\infty);L^2)$ in both cases. The cutoff is fixed before sending $S\to\infty$ as in \eqref{eq:intro_order_pick}. 

\medskip

The frozen equation \eqref{eq:frozen_PDE} is well-posed for real $\non\in L^6$. If $\non\geq 0$, this supplies the integrated gradient bound needed for the defocusing comparison. 
\begin{lemma}\label{lem:frozen_est}
    Let $\non\in L^6$ be real. Equation \eqref{eq:frozen_PDE} with initial datum $g_\delta(S)\in H^1$, has a unique solution in $C([S,\infty);H^1)\cap C^1([S,\infty);H^{-1})$. Moreover, it has mass conservation
    \begin{equation}\label{eq:frozen_mass}
        \big\|g_\delta^S(t)\big\|_2=\big\|g_\delta(S)\big\|_2\leq \|g(S)\|_2=\sqrt{\mass}.
    \end{equation}
    Define the frozen energy
    \begin{equation*}
        \kfun_\delta^S(t):=\frac{\big\|\nabla g_\delta^S(t)\big\|_2^2}{t}+\int \non(y) |g_\delta^S(t,y)|^2dy.
    \end{equation*}
    We have
    \begin{equation}\label{eq:K_cutoff_prime}
        \partial_t\kfun_\delta^S(t)=-\frac{\big\|\nabla g_\delta^S(t)\big\|_2^2}{t^2}. 
    \end{equation}
    If, in addition, $\non\geq 0$, then 
    \begin{equation}\label{eq:frozen_est}
        \sup_{t\geq S}\frac{\big\|\nabla g_\delta^S(t)\big\|_2^2}{t}\leq \kfun_\delta^S(S), \qquad \int _S^\infty \frac{\big\|\nabla g_\delta^S(t)\big\|_2^2}{t^2}dt\leq \kfun_\delta^S(S). 
    \end{equation}
\end{lemma}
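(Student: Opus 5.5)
The plan is to treat \eqref{eq:frozen_PDE} as a linear Schr\"odinger equation with a time-dependent, bounded-below perturbation, and to obtain well-posedness by a fixed-point argument. The only structure needed is that multiplication by the real function $\non\in L^6$ maps $H^1$ into $L^2$. By H\"older's inequality and Sobolev embedding, $\|\non f\|_2\leq\|\non\|_6\|f\|_3\lesssim\|\non\|_6\|f\|_{H^1}$. The free propagator generated by $t^{-2}\Delta$ is unitary on $L^2$ and on $H^1$, since it is a time-reparametrised free Schr\"odinger group. I would therefore write the Duhamel formula on short intervals and close a contraction in $C([S,S+\tau];L^2)$ using the $L^2$ bound above, which needs an $H^1$ bound on the iterates.

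A cleaner alternative is to work in the energy space $H^1$ directly. For this we need $\|\nabla(\non f)\|_{H^{-1}}$-type control. Since $\non f\in L^2$ and $\|\non f\|_{L^{3/2}}\lesssim\|f\|_{H^1}$ while $\nabla\non$ may fail to exist, I would instead regularise: take $\non_n\in C_c^\infty$ with $\non_n\to\non$ in $L^6$, and solve the smooth problem globally in $H^1$. Then I would pass to the limit using the uniform energy bound \eqref{eq:K_cutoff_prime}, which is derived below at the regularised level, together with the equation in $H^{-1}$. Uniqueness in $C([S,\infty);H^1)\cap C^1(H^{-1})$ follows by pairing the difference $w$ of two solutions with itself: the real potential drops out of $\partial_t\|w\|_2^2=2\Re\la\partial_tw,w\ra$, which is legitimate since $\partial_tw\in H^{-1}$ and $w\in H^1$, so $w\equiv0$. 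This same pairing with $g_\delta^S$ gives the mass identity \eqref{eq:frozen_mass}. The inequality $\|g_\delta(S)\|_2\leq\|g(S)\|_2$ holds because $0\leq\chi_\delta\leq1$.

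For the energy identity, I would compute formally with $t^{-1}\|\nabla g_\delta^S\|_2^2+\int\non|g_\delta^S|^2$, writing $E_t[f]=t^{-2}\|\nabla f\|_2^2+t^{-1}\int\non|f|^2$ for the instantaneous Hamiltonian of the equation. Then $\kfun_\delta^S=tE_t[g_\delta^S]$. Along the flow only the explicit time dependence contributes, $\frac{d}{dt}E_t[g]=\partial_tE_t[g]=-2t^{-3}\|\nabla g\|_2^2-t^{-2}\int\non|g|^2$. Hence
\[
\partial_t\kfun_\delta^S=E_t+t\,\partial_tE_t=t^{-2}\|\nabla g\|^2_2+t^{-1}\!\!\int\non|g|^2-2t^{-2}\|\nabla g\|_2^2-t^{-1}\!\!\int\non|g|^2=-t^{-2}\|\nabla g_\delta^S\|_2^2 ,
\]
which is \eqref{eq:K_cutoff_prime}. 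I would justify this for smooth $\non_n$ and $H^2$ data, where $\partial_t g\in L^2$, and pass to the limit in integrated form. The limit uses $\int\non_n|g_n|^2\to\int\non|g|^2$, which follows from the $L^6$ convergence and the $L^{12/5}$ bounds, and weak lower semicontinuity of the gradient terms. This yields the identity in integrated form, or at least the inequality $\kfun_\delta^S(t)+\int_S^t s^{-2}\|\nabla g_\delta^S\|_2^2\,ds\leq\kfun_\delta^S(S)$, which is all that \eqref{eq:frozen_est} needs.

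Finally, if $\non\geq0$, both terms of $\kfun_\delta^S$ are nonnegative and the integrated identity immediately gives the two bounds in \eqref{eq:frozen_est}. The main obstacle is the approximation step. The Duhamel map need not preserve $H^1$ directly, because $\nabla\non\notin L^{3}$ in general. Global control therefore comes only from the energy, and for signed $\non$ this requires $\int\non|f|^2\lesssim\|\non\|_6\|f\|_2^{3/2}\|\nabla f\|_2^{1/2}$ to be absorbed by the gradient term for fixed $t$, which is possible on bounded time intervals. Obtaining $C([S,\infty);H^1)$ continuity, rather than weak continuity, would use energy conservation in the limit, via the identity's continuity in $t$, together with mass conservation.
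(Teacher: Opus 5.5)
\textbf{Gap: the limit $n\to\infty$ gives only an energy inequality and weak continuity.} Your formal computation of $\partial_t\kfun_\delta^S$ is right, and so are the mass and uniqueness pairings. The scheme, however, proves less than the lemma asserts.

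Removing the regularisation $\non_n\to\non$ in $L^6$ gives convergence of $g_n$ in $C([S,T];L^2)$, via $\|g_n(t)-g_m(t)\|_2\le\int_S^t s^{-1}\|(\non_n-\non_m)g_m(s)\|_2\,ds$. It gives only weak-$*$ convergence in $L^\infty([S,T];H^1)$. So the limit is a priori only weakly continuous in $H^1$. Weak lower semicontinuity then yields only the energy inequality $\kfun_\delta^S(t)+\int_S^t s^{-2}\|\nabla g_\delta^S(s)\|_2^2\,ds\le\kfun_\delta^S(S)$, not ``the identity in integrated form''.

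That inequality does give \eqref{eq:frozen_est}, which is all the paper uses later. But the statement also claims the exact identity \eqref{eq:K_cutoff_prime} and $g_\delta^S\in C([S,\infty);H^1)\cap C^1([S,\infty);H^{-1})$. Your route to strong continuity, ``energy conservation in the limit'', presupposes exactly the identity you have not obtained, so as written that step is circular.

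\textbf{Missing idea: the flow preserves $H^2$.} Although the Duhamel map does not preserve $H^1$, the flow preserves $H^2$ for the actual coefficient. H\"older, Gagliardo--Nirenberg and $\|\nabla f\|_2^2\le\|f\|_2\|\Delta f\|_2$ give, for every $\varepsilon>0$, $\|\non f\|_2\le\varepsilon\|\Delta f\|_2+C_\varepsilon\|f\|_2$, with $C_\varepsilon$ depending only on $\varepsilon$ and $\|\non\|_6$. Hence $-t^{-2}\Delta+t^{-1}\non$ is self-adjoint on the common domain $H^2$. The paper invokes the Kato/Reed--Simon theorem for time-dependent generators with a common domain to get a unitary $L^2$ propagator preserving $H^2$. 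For $H^2$ data one then has $\partial_tg_\delta^S\in L^2$, so the computation of $\partial_t\kfun_\delta^S$ is rigorous. Interpolation between $L^2$ and $H^2$, plus density, give $C(H^1)$ and the exact identity for $H^1$ data.

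You can close your own scheme the same way, in three steps. First, $\partial_tg_n$ solves the same equation with source $-2t^{-3}\Delta g_n+t^{-2}\non_ng_n$. The relative bound is uniform in $n$, so Gronwall gives $H^2$ bounds on $[S,T]$ uniform in $n$ for $H^2$ data. Second, the energy identity for $g_n-g_m$ with coefficient $\non_n$ has source bounded by $2t^{-1}\|(-t^{-1}\Delta+\non_n)(g_n-g_m)\|_2\,\|(\non_n-\non_m)g_m\|_2\lesssim_T\|\non_n-\non_m\|_6$. So $g_n$ is Cauchy in $C([S,T];H^1)$ and the identity passes to the limit. Third, linearity extends this to $H^1$ data: apply the identity to differences of solutions, absorbing the signed term as you indicate.
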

\begin{proof}
    Since $\non\in L^6$ is real, the operators $t^{-2}\Delta-t^{-1}\non$ are self-adjoint on the common domain $H^2$ and depend smoothly on $t$ as maps from $H^2$ to $L^2$. \cite{RS} gives a unitary $L^2$ propagator that preserves $H^2$ and is locally bounded there. Interpolation between $L^2$ and $H^2$ gives the asserted $H^1$ well-posedness, and the equation gives $C^1$ regularity in $H^{-1}$. Unitarity and $|\chi_\delta|\leq1$ give \eqref{eq:frozen_mass}.

    Differentiation and integration by parts give
    \begin{align*}
        \partial_t \kfun_\delta^S(t)&=-\frac{\|\nabla g_\delta^S(t)\|_2^2}{t^2}+2\Re\int\left(-\frac1t\Delta g_\delta^S+\non g_\delta^S\right) \overline{\partial_tg_\delta^S}dy\\
        &=-\frac{\|\nabla g_\delta^S(t)\|_2^2}{t^2}
        +2\Re\int(-it)|\partial_t g_\delta^S|^2 dy = -\frac{\|\nabla g_\delta^S(t)\|_2^2}{t^2}.
    \end{align*}
    If $\non\geq 0$, then both terms in $\kfun_\delta^S$ are nonnegative. \eqref{eq:K_cutoff_prime} then implies \eqref{eq:frozen_est}. 
\end{proof}

Since $\non$ is real, the potential term cancels from the density equation. We next derive estimates on the entropies.
\begin{lemma}\label{lem:ent_evolution}
    Let $f$ denote the original profile $g$ or the frozen solution $g^S_\delta$. For $s\leq t$ in the corresponding interval of existence, we have
    \begin{equation}\label{eq:ent_evolution}
        \big|\ent[f(t)]-\ent[f(s)]\big|\leq 4\int_s^t \tau^{-2}\big\|\nabla f(\tau)\|_2^2d\tau.
    \end{equation}
\end{lemma}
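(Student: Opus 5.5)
The key observation is that both $f=g$ and $f=g_\delta^S$ satisfy a Schr\"odinger equation of the form
\[
i\partial_t f - t^{-2}\Delta f = W f,
\]
with a \emph{real} multiplication coefficient $W$. For $g$, the coefficient is $W=-V(ty)-\lambda t^{-1}\phi_g$, or $-t^{-1}(\kappa/|y|+\lambda\phi_g)$ in the Coulomb case. For the frozen flow it is $W=-t^{-1}\non$. Since $W$ is real, the multiplication term drops out of the density equation, exactly as in \eqref{eq:density_flux_g}:
\[
\partial_t|f|^2=2t^{-2}\nabla\cdot\Im(\bar f\nabla f).
\]
This holds for $g$ by \eqref{eq:density_flux_g}, and for $g_\delta^S$ by the same computation.

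Write $\ent[f]=\int F(|f|^2)$ with $F(r)=r\log(1+r)$. Then
\[
F'(r)=\log(1+r)+\frac{r}{1+r}.
\]
Pairing the density equation with $F'(|f|^2)$ and integrating by parts once gives
\[
\frac{d}{dt}\ent[f]=-2t^{-2}\int F''(|f|^2)\,\nabla|f|^2\cdot\Im(\bar f\nabla f)\,dy,
\qquad F''(r)=\frac{1}{1+r}+\frac{1}{(1+r)^2}.
\]
To bound the integrand, use $|\nabla|f|^2|\le 2|f||\nabla f|$ and $|\Im(\bar f\nabla f)|\le|f||\nabla f|$. This gives the pointwise bound
\[
F''(|f|^2)\,\big|\nabla|f|^2\big|\,\big|\Im(\bar f\nabla f)\big|
\le \frac{2+|f|^2}{(1+|f|^2)^2}\,2|f|^2|\nabla f|^2\le 2|\nabla f|^2,
\]
since $r(2+r)\le(1+r)^2$. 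Hence $|\partial_t\ent[f]|\le 4t^{-2}\|\nabla f\|_2^2$, and integrating from $s$ to $t$ yields \eqref{eq:ent_evolution}. A sharper bound uses $\nabla|f|^2=2\Re(\bar f\nabla f)$ and the elementary inequality $|\Re z\,\Im z|\le|z|^2/2$ for each component; this gains a factor, but the stated constant $4$ suffices.

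\textbf{Main obstacle: justification.} The solutions are only in $C(H^1)\cap C^1(H^{-1})$, and $F'(|f|^2)$ grows logarithmically, so the pairing and the integration by parts need care. The plan is as follows.
\begin{itemize}
\item First derive the identity in integrated form for smooth, bounded functions $F$. Replace $F$ by the truncation $F_L(r)=\int_0^r\int_0^\rho F''\mathbf 1_{\{\sigma\le L\}}\,d\sigma\,d\rho$, for which $F_L'$ is bounded and $F_L''\le F''$. With this, $\partial_t|f|^2\in L^1_tW^{-1,1}$ can be legitimately tested against $F_L'(|f|^2)$. To see this, note $\nabla F_L'(|f|^2)=F_L''(|f|^2)\nabla|f|^2$ is in $L^2$, because $F_L''(r)\sqrt r$ is bounded, and $|f|\nabla f\in L^1$.
\item If needed, additionally regularize the data in $H^2$ and use continuous dependence (Proposition~\ref{prop:gwp} and Lemma~\ref{lem:frozen_est}) to pass to the limit in $H^1$.
\item The pointwise bound above holds uniformly in $L$. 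Let $L\to\infty$ using monotone convergence, together with $\ent[f]<\infty$ from \eqref{eq:ent_well_defined}.
\item For the inverse-square case, note that $V$ never enters the density identity, because only the quadratic form is used (as in Lemma~\ref{lem:rescaled_energy_identity}). So no $V(ty)g\in L^2$ is needed.
\end{itemize}
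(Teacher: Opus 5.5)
Your proposal is correct and follows the same route as the paper. The real multiplication coefficient drops out of the density--flux relation, and one integration by parts against $F'(|f|^2)$ with $F(r)=r\log(1+r)$ gives $\partial_t\ent[f]=-2t^{-2}\int F''(|f|^2)\nabla|f|^2\cdot\Im(\bar f\nabla f)\,dy$; the pointwise bound $rF''(r)\le 1$ then yields the constant $4$. Your truncation $F_L$, with $F_L'$ bounded, $F_L''\le F''$, and monotone convergence as $L\to\infty$, is a sound way to supply the justification that the paper leaves implicit.
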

\begin{proof}
   Since $\non$ is real, the frozen profile's density satisfies the same density-flux relation as $g$ in \eqref{eq:density_flux_g}. Hence, we write
   \[
   \partial_t|f|^2=2t^{-2}\nabla \cdot \Im(\bar f\,\nabla f). 
   \]
    Write $F(r):=r\log(1+r)$. The density-flux relation and integration by parts give
    \begin{align*}
        \partial_t \ent[f(t)]=-2t^{-2}\int F''(|f(t,y)|^2)\nabla |f(t,y)|^2\cdot \Im (\bar f\,\nabla f)(t,y)dy
    \end{align*}
    Using $F''(r)=(r+2)/(1+r)^2$ and $0\leq rF''(r)\leq 1$, the integral is bounded in absolute value by $2\|\nabla v\|_2^2$. This proves \eqref{eq:ent_evolution}.
\end{proof}

\medskip

From this point through the end of Subsection \ref{subsec:convergence_frozen}, we work with the \emph{defocusing} problems and assume $\lambda>0$, and, for general potentials, \eqref{eq:A1}--\eqref{eq:A3} and \eqref{eq:D1}--\eqref{eq:D2}. We now use the \emph{defocusing} bounds \eqref{eq:nabla_g_grow} and \eqref{eq:integrability_nabla_g} from Section \ref{sec:est}. Lemma \ref{lem:ent_evolution} gives
\begin{equation}\label{eq:defocusing_ent_bound}
   \sup_{t\geq 1} \ent[g(t)]\leq \ent[g(1)]+4M_V, \qquad \ent[g_\delta(t)]\leq \ent[g(t)].
\end{equation}
The second inequality comes from $|\chi_\delta|\leq 1$. Moreover, since $\Phi_\infty\geq 0$, \eqref{eq:limiting_field_pair_est} and Remark \ref{rmk:defocusing_AB} give
\begin{align*}
    0\leq \int \Phi_\infty(y)\big|g_\delta(S,y)\big|^2 dy\leq \int \Phi_\infty(y)\big|g(S,y)\big|^2 dy\leq \frac{2M_V}{\lambda}.
\end{align*}
For general potentials, together with \eqref{eq:cutoff_nabla} and Remark \ref{rmk:defocusing_AB}, this yields a uniform bound on the initial frozen energy:
\begin{equation}
    \kfun_\delta^S(S)\lesssim_\chi M_V. 
\end{equation}
For the Coulomb potential, the support of $g_\delta$ and positivity of the rescaled energy \eqref{eq:K} give
\[
    \int\frac{\kappa}{\max\{|y|,\delta\}} |g_\delta(S,y)|^2 dy=\kappa \int\frac{|g_\delta(S,y)|^2}{|y|} dy\leq \kappa \int\frac{|g(S,y)|^2}{|y|} dy\leq \kfun(S)\leq M_V.
\]
Hence, in both cases, we have
\begin{equation}\label{eq:initial_frozen_bound}
    \kfun_\delta^S(S)\lesssim_\chi  M_V,
\end{equation}
and Lemma \ref{lem:ent_evolution},  \eqref{eq:frozen_est} and \eqref{eq:integrability_nabla_g} imply
\begin{equation}\label{eq:ent_frozen_bound}
    \begin{split}
        \sup_{S\geq 1}\sup_{t\geq S}\ent[g_\delta^S(t)]\leq \sup_{S\geq 1}\left(\ent[g_\delta(S)]+4\kfun_\delta^S(S) \right)\lesssim_\chi \ent[g(1)]+M_V.
    \end{split}
\end{equation}
These bounds hold uniform in $\delta$. 

For later use, we also define the gradient tails here as
\begin{equation}\label{eq:nata}
    \nata_\delta(S):=\int_S^\infty t^{-2}\big\|\nabla g_\delta(t)\big\|_2^2dt,\qquad \nata_\delta^S(R):=\int_R^\infty t^{-2}\big\|\nabla g_\delta^S(t)\big\|_2^2dt,\quad \text{for }R\geq S.
\end{equation}
\eqref{eq:integrability_nabla_g}, \eqref{eq:frozen_est} and \eqref{eq:initial_frozen_bound} imply
\begin{equation}\label{eq:nata_limit}
    \nata_\delta(S)\longrightarrow 0, \quad \text{as }\,S\to \infty,\quad \text{and}\quad \nata_\delta^S(R)\longrightarrow 0\quad \text{as }\,R\to \infty, \text{ with }S \text{ fixed.}
\end{equation}
These bounds do not imply $\nata_\delta^S(S)\to 0$, as $S\to \infty$. They give only
\begin{equation}\label{eq:nata_bound}
    \sup_{S\geq 1}\nata_\delta^S(S)\lesssim_\chi M_V.
\end{equation}
We will use unequal truncation levels in the next subsection to balance these two bounds. Both convergences in \eqref{eq:nata_limit} are only qualitative. Accordingly, he argument below gives qualitative modified scattering in Theorems \ref{thm:Coulomb_defocusing} and \ref{thm:square_defocusing}.

%%%%%%%%%%%%%%%%%%%%%%%%%%%%%%%%%%%%%%%
%%%%%%%%%%%%%%%%%%%%%%%%%%%%%%%%%%%%%%%
%%%%%%%%%%%%%%%%%%%%%%%%%%%%%%%%%%%%%%%

\subsection{Comparison with two truncation levels}\label{subsec:2level}
Equality of mass turns a distance into an inner product. Since we start the frozen evolution at $g_\delta(S)$, the conserved mass for the frozen evolution \eqref{eq:frozen_mass} implies
\begin{equation}\label{eq:comparison_frozen_decom}
\begin{split}
    \big\|g_\delta(t)-g_\delta^S(t)\big\|_2^2&=\left(\|g_\delta(t)\|_2^2-\|g_\delta(S)\|_2^2\right)\\
    &\qquad +2\Re \left(\int g_\delta(S,y)\overline{g_\delta^S(S,y)}dy-\int g_\delta(t,y)\overline{g_\delta^S(t,y)}dy\right).
\end{split}
\end{equation}
The common Laplacian and frozen coefficient cancel, giving
\begin{equation}\label{eq:comparison_frozen_norm_PDE}
    \partial_t \|g_\delta(t)\|_2^2=2\Im \la \mathcal{R}_\delta(t),g_\delta(t)\ra,
\end{equation}
and 
\begin{equation}\label{eq:comparison_frozen_pair_PDE}
    \partial_t \Re \int  g_\delta(t,y)\overline{ g_\delta^S(t,y)}dy=-\frac{\lambda}{t}\int (\phi_g(t,y)-\Phi_\infty(y))\Im(g_\delta\overline{g_\delta^S})(t,y)dy+\Im \la \mathcal R_\delta(t),g_\delta^S(t)\ra.
\end{equation}

The field difference in $L^2$ gives decay, but the product of the two profiles is not uniformly controlled in $L^2$. Amplitude truncation supplies this missing bound, while entropy controls the discarded part. More specifically, we prove the following estimate. 
\begin{proposition}\label{prop:frozen_comparison_convergence}
    Under $\lambda>0$, for $0<\delta<1$, $S\geq 1$, and $1\leq L_1\leq L_2$, we have
    \begin{equation}\label{eq:frozen_comparison_bound}
        \begin{split}
            \sup_{t\geq S}\|g_\delta(t)-g_\delta^S(t)\|_2^2
        &\lesssim_{\chi,w}
        \sqrt{\mass\big(\ent[g(1)]+M_V\big)}
        \left(\frac{1}{\sqrt{\log(1+L_1^2)}}+\frac{1}{\sqrt{\log(1+L_2^2)}}\right)\\
        &\quad+ \frac{L_2}{L_1}\nata_\delta(S)
        +\frac{L_1}{L_2}\nata_\delta^S(S)
        +|\lambda|\mass^{\frac{3}{4}} M_V^{\frac{3}{4}}L_1 S^{-\frac{1}{4}}\\
        &\quad+ \sqrt\mass\int_S^\infty\|\mathcal R_\delta(t)\|_2\,dt.
        \end{split}
    \end{equation}
    In particular, for every fixed $0<\delta<1$, we have
    \begin{equation}\label{eq:frozen_comparison_convergence}
        \sup_{t\geq S}\|g_\delta(t)-g_\delta^S(t)\|_2 \longrightarrow 0
        \qquad \text{as }S\to\infty.
    \end{equation}
\end{proposition}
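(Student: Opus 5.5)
We start from the decomposition \eqref{eq:comparison_frozen_decom}. The mass-variation term is handled by \eqref{eq:comparison_frozen_norm_PDE}: integrating from $S$ to $t$ and using $\|g_\delta\|_2\le\sqrt\mass$ gives a bound by $2\sqrt\mass\int_S^\infty\|\mathcal R_\delta\|_2\,dt$. For the pairing, we will not differentiate $\Re\la g_\delta,g_\delta^S\ra$ directly. Instead we replace it by the truncated pairing $P(t):=\Re\la \Tone g_\delta(t),\Ttwo g_\delta^S(t)\ra$. The replacement error at times $S$ and $t$ is controlled by Cauchy--Schwarz, \eqref{eq:smt_tail}, \eqref{eq:smt_bdd} and the uniform entropy bounds \eqref{eq:defocusing_ent_bound}, \eqref{eq:ent_frozen_bound}. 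This error is at most $\sqrt{\mass(\ent[g(1)]+M_V)}\,(\log(1+L_1^2)^{-1/2}+\log(1+L_2^2)^{-1/2})$, which yields the first line of \eqref{eq:frozen_comparison_bound}.

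Next we differentiate $P$ using \eqref{eq:PDE_g_cutoff} and \eqref{eq:frozen_PDE}. We write $\partial_t\smt_L f=D\smt_L(f)[\partial_tf]$ and sort the contributions.

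\textbf{Frozen coefficient.} The coefficient $-t^{-1}\non$ enters both equations as $i t^{-1}\non f$. By the equivariance \eqref{eq:smt_phase_equivariance}, it produces $i t^{-1}\non\,\smt_L f$ in each factor. These two contributions cancel exactly in $\Re\la\cdot,\cdot\ra$, since $\non$ is real.

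\textbf{Field error.} This gives $t^{-1}\lambda\int(\phi_g-\Phi_\infty)\,\Re\big(-iD\Tone(g_\delta)[g_\delta]\,\overline{\Ttwo g_\delta^S}\big)$. For $z=g_\delta$ we have $|D\Tone(z)[iz]|=|\Tone z|\le 2L_1$. By Proposition \ref{prop:limiting_field} and Remark \ref{rmk:defocusing_AB}, the term is bounded by $t^{-1}|\lambda|\,\|\phi_g-\Phi_\infty\|_2\,2L_1\sqrt\mass\lesssim|\lambda|\mass^{3/4}M_V^{3/4}L_1t^{-5/4}$. Integrating from $S$ gives the $L_1S^{-1/4}$ term.

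\textbf{Forcing.} The forcing $\mathcal R_\delta$ enters through $D\Tone$, which has operator norm at most $1$. It contributes $\sqrt\mass\|\mathcal R_\delta\|_2$.

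\textbf{Laplacians (the main obstacle).} The remaining term is
\[
t^{-2}\,\Re\Big(\la D\Tone(g_\delta)[-i\Delta g_\delta],\Ttwo g_\delta^S\ra+\la\Tone g_\delta,D\Ttwo(g_\delta^S)[-i\Delta g_\delta^S]\ra\Big).
\]
Without truncation the two terms cancel after integrating by parts. With truncation, we integrate by parts once in each term, writing $\Delta=\nabla\cdot\nabla$. Derivatives then fall either on $D\smt_L(f)$, which produces $D^2\smt_L(f)[\nabla f,\nabla f]$, or on the other factor.

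The $D^2$ terms are bounded by \eqref{eq:Dsmt_bound}. For the first term we get
\[
L_1^{-1}\int|\nabla g_\delta|^2\,|\Ttwo g_\delta^S|\le \frac{2L_2}{L_1}\|\nabla g_\delta\|_2^2.
\]
For the second we get $L_2^{-1}\|\nabla g_\delta^S\|_2^2\cdot 2L_1$. Multiplying by $t^{-2}$ and integrating yields $\frac{L_2}{L_1}\nata_\delta(S)+\frac{L_1}{L_2}\nata_\delta^S(S)$.

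The cross terms have the form $\Re\,i\int D\Tone[\nabla g_\delta]\cdot\overline{\nabla\Ttwo g_\delta^S}$ together with its mirror. We would like them to cancel, but the derivatives $D\smt$ act on different arguments. The step we will try first is to write $\nabla\Ttwo g_\delta^S=D\Ttwo(g_\delta^S)[\nabla g_\delta^S]$, and likewise for $\Tone g_\delta$. Both cross terms then become $\pm\,\Re\, i\int D\Tone(g_\delta)[\nabla g_\delta]\cdot\overline{D\Ttwo(g_\delta^S)[\nabla g_\delta^S]}$. This is the same expression with opposite signs, so the two cross terms cancel. The bookkeeping must be done carefully because $D\smt$ is only $\R$-linear. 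If exact cancellation fails, the fallback is to bound the discrepancy through the regions $|g_\delta|>L_1$ or $|g_\delta^S|>L_2$ using the $D^2$ bounds again.

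Collecting all the terms gives \eqref{eq:frozen_comparison_bound}. For \eqref{eq:frozen_comparison_convergence}, we take the choices in the order \eqref{eq:intro_order_pick}. First take $L_1$ large to make the entropy terms small. Then take $L_2/L_1$ large, using \eqref{eq:nata_bound}. Finally take $S$ large, using \eqref{eq:nata_limit}, the $S^{-1/4}$ decay, and \eqref{eq:cutoff_forcing_tail}--\eqref{eq:external_forcing_tail}.
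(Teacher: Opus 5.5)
Everything in your outline matches the paper's proof except the first-order cross terms coming from the Laplacians. After one integration by parts those terms are
\[
\frac{1}{t^2}\sum_{k}\Re\int\Big(D\Tone(g_\delta)[i\partial_kg_\delta]\,\overline{D\Ttwo(g_\delta^S)[\partial_kg_\delta^S]}+D\Tone(g_\delta)[\partial_kg_\delta]\,\overline{D\Ttwo(g_\delta^S)[i\partial_kg_\delta^S]}\Big)dy .
\]
The factor $i$ sits \emph{inside} the argument of $D\smt_L$, which is only $\R$-linear. Rewriting these as $\pm\Re\, i\int D\Tone(g_\delta)[\nabla g_\delta]\cdot\overline{D\Ttwo(g_\delta^S)[\nabla g_\delta^S]}$ pulls the $i$ out, that is, it treats $D\smt_L(z)$ as complex-linear. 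That fails exactly where the truncation is active: for $|z|>L$ the radial and tangential eigenvalues of $D\smt_L(z)$ differ.

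Here is a concrete instance. Take a point where $g_\delta>2L_1$ is real and $|g_\delta^S|<L_2$. There $D\Tone(g_\delta)$ annihilates real directions and multiplies $i\R$ by some $\mu>0$, while $D\Ttwo(g_\delta^S)$ is the identity. With $\partial_kg_\delta=1$ and $\partial_kg_\delta^S=i$, the two integrands add up to $\mu\neq0$, so the cross terms do not cancel.

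Your fallback does not close this. The discrepancy is a first-order quantity, of pointwise size up to $2|\nabla g_\delta||\nabla g_\delta^S|$ on $\{|g_\delta|>L_1\}\cup\{|g_\delta^S|>L_2\}$. The $D^2$ bounds give no information about it, and nothing makes the gradients small on those sets. The same slip appears in your field-error term, which should involve $D\Tone(g_\delta)[ig_\delta]=i\Tone g_\delta$ rather than $-iD\Tone(g_\delta)[g_\delta]$. There it is harmless, because you only use a bound on the modulus.

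The missing observation is that no cancellation is needed. Since $\|D\smt_L\|_{\mathrm{op}}\le1$, the cross terms are bounded by $2t^{-2}\|\nabla g_\delta\|_2\|\nabla g_\delta^S\|_2$. The weighted Young inequality
\[
2\|\nabla g_\delta\|_2\|\nabla g_\delta^S\|_2\le\frac{L_2}{L_1}\|\nabla g_\delta\|_2^2+\frac{L_1}{L_2}\|\nabla g_\delta^S\|_2^2
\]
then puts them in the same form as your $D^2$ contributions. After integrating in time they add only $\frac{L_2}{L_1}\nata_\delta(S)+\frac{L_1}{L_2}\nata_\delta^S(S)$ to the bound; this is how the paper obtains \eqref{eq:bound_comparison_differentiated}. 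With this replacement, the rest of your argument goes through as written, including the order in which $L_1$, $L_2/L_1$ and $S$ are chosen.
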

\begin{proof}
    Lemma \ref{lem:smt} and the mass bounds \eqref{eq:conservation} and \eqref{eq:frozen_mass} imply
    \begin{equation}\label{eq:discarded_truncation}
    \begin{split}
        \left|\la g_\delta(t),g_\delta^S(t)\ra-\la \Tone g_\delta(t), \Ttwo g_\delta^S(t)\ra\right|
        \leq \sqrt\mass \left(
        \sqrt{\frac{\ent[g_\delta(t)]}{\log(1+L_1^2)}}
        +\sqrt{\frac{\ent[g_\delta^S(t)]}{\log(1+L_2^2)}}\right).
    \end{split}
    \end{equation}
    Using \eqref{eq:smt_phase_equivariance}, we have
    \begin{equation*}
        \begin{split}
            \partial_t (\Tone g_\delta)&=-\frac{1}{t^2} D\Tone(g_\delta)[i\Delta g_\delta]
            +\frac{i}{t}\big(\non+\lambda(\phi_g-\Phi_\infty)\big)\Tone g_\delta-D\Tone (g_\delta)[i\mathcal R_\delta],\\
            \partial_t (\Ttwo g_\delta^S)&=-\frac{1}{t^2}D\Ttwo(g_\delta^S)[i\Delta g_\delta^S] +\frac{i}{t}\non \Ttwo g_\delta^S.
        \end{split}
    \end{equation*}
    Thus the terms containing $\non$ cancel when differentiating the real inner product. Integrating the Laplacian terms by parts, we obtain 
    \begin{equation}
        \begin{split}
            &\partial_t \Re \la\Tone g_\delta,\Ttwo g_\delta^S\ra\\
            &=\frac{1}{t^2}\sum_{k=1}^3\Re\int\Big( D\Tone (g_\delta)[i\partial_k g_\delta]\overline{D\Ttwo (g_\delta^S)[\partial_k g_\delta^S]}+\Tone (g_\delta) \overline{D^2\Ttwo (g_\delta^S)[\partial_k g_\delta^S,i\partial_k g_\delta^S]}\Big)dy\\
            &\quad+\frac{1}{t^2}\sum_{k=1}^3 \Re \int\Big(D\Tone (g_\delta)[\partial_k g_\delta]\overline{D\Ttwo(g_\delta^S)[i\partial_k g_\delta^S]}+D^2\Tone (g_\delta)[\partial_k g_\delta,i\partial_k g_\delta] \overline{\Ttwo g_\delta^S}\Big)dy\\
            &\quad -\frac{\lambda}{t}\int(\phi_g-\Phi_\infty) \Im\big(\Tone g_\delta\,\overline{\Ttwo g_\delta^S}\big)dy-\Re\int D\Tone (g_\delta)[i\mathcal R_\delta]\, \overline{\Ttwo g_\delta^S}\,dy
        \end{split}
    \end{equation}
    The truncation bounds \eqref{eq:smt_bound_map}, \eqref{eq:Dsmt_bound} and \eqref{eq:smt_bdd} imply
    \begin{equation}\label{eq:bound_comparison_differentiated}
        \begin{split}
            \left|\partial_t\Re\la \Tone g_\delta,\Ttwo g_\delta^S\ra\right|&\leq  \frac{\lambda}{t}\left|\int(\phi_g-\Phi_\infty) \Im\big(\Tone g_\delta\,\overline{\Ttwo g_\delta^S}\big)dy\right|+\|\mathcal R_\delta\|_2\|g_\delta^S\|_2\\
            &\quad + \frac{1}{t^2}\left( 2\|\nabla g_\delta\|_2\|\nabla g_\delta^S\|_2 +2C_w\frac{L_2}{L_1}\|\nabla g_\delta\|_2^2 +2C_w\frac{L_1}{L_2}\|\nabla g_\delta^S\|_2^2\right).
        \end{split}
    \end{equation}
    Young's inequality bounds
    \begin{equation}
        2\|\nabla g_\delta\|_2\|\nabla g_\delta^S\|_2 \leq\frac{L_2}{L_1}\|\nabla g_\delta\|_2^2 +\frac{L_1}{L_2}\|\nabla g_\delta^S\|_2^2.
    \end{equation}
    The truncated amplitude makes the field error integrable. Integrating \eqref{eq:bound_comparison_differentiated}, Proposition \ref{prop:limiting_field} and \eqref{eq:frozen_mass} give
    \begin{equation}\label{eq:bound_comparison_integrated}
        \begin{split}
            &\big|\Re \la\Tone g_\delta(t),\Ttwo g_\delta^S(t)\ra-\Re \la\Tone g_\delta(S),\Ttwo g_\delta^S(S)\ra\big|\\
        &\lesssim_w\frac{L_2}{L_1}\nata_\delta(S)+\frac{L_1}{L_2}\nata_\delta^S(S) \\
        &\qquad +|\lambda|L_1\int_S^\infty t^{-1}\|\phi_g(t)-\Phi_\infty\|_2\big\| g_\delta^S(t)\big\|_2 dt+\sqrt{\mass}\int_S^\infty \|\mathcal{R}_\delta(t)\|_2 dt\\
        &\lesssim_w \frac{L_2}{L_1}\nata_\delta(S)
        +\frac{L_1}{L_2}\nata_\delta^S(S)
        +|\lambda|\mass^{\frac{3}{4}} M_V^{\frac{3}{4}}L_1 S^{-\frac{1}{4}}+ \sqrt\mass\int_S^\infty\|\mathcal R_\delta(t)\|_2\,dt.
        \end{split}
    \end{equation}
    Finally, integrating \eqref{eq:comparison_frozen_norm_PDE} yields
    \begin{equation}\label{eq:bound_comparison_norm}
        \left|\|g_\delta(t)\|_2^2-\|g_\delta(S)\|_2^2\right| \leq 2\sqrt\mass\int_S^\infty\|\mathcal R_\delta(t)\|_2dt.   
    \end{equation}
    Applying \eqref{eq:comparison_frozen_decom} and adding \eqref{eq:discarded_truncation} at both $t$ and $S$, \eqref{eq:bound_comparison_integrated} and \eqref{eq:bound_comparison_norm} prove \eqref{eq:frozen_comparison_bound}. 

    \medskip

    The two levels have different roles. First take $L_1$ large to control the discarded mass, then $L_2/L_1$ large to suppress the uniformly bounded frozen tail. With these choices fixed, take $S$ large. The gradient tail bounds \eqref{eq:nata_limit}, \eqref{eq:nata_bound} and the $\mathcal R_\delta$ estimates \eqref{eq:cutoff_forcing_tail} and \eqref{eq:external_forcing_tail} together prove \eqref{eq:frozen_comparison_convergence}.     
\end{proof}

%%%%%%%%%%%%%%%%%%%%%%%%%%%%%%%%%%%%%%%
%%%%%%%%%%%%%%%%%%%%%%%%%%%%%%%%%%%%%%%
%%%%%%%%%%%%%%%%%%%%%%%%%%%%%%%%%%%%%%%

\subsection{Convergence of the frozen profiles}\label{subsec:convergence_frozen}
We now fix the starting time $S$ for the frozen evolution and prove that $h_\delta^S(t)$ is a Cauchy sequence. 

\begin{proposition}\label{prop:frozen_profile_Cauchy}
    Let $\lambda>0$ and let $\non$ be as in \eqref{eq:frozen_coefficient}, so that $\non\in \dot H^1$ and nonnegative. For fixed $0<\delta<1$ and $S\geq 1$, the corrected profile $h_\delta^S$ in \eqref{eq:h_cutoff} converges strongly in $L^2$. More precisely, for $R\geq S$ and $L_2\geq 1$, \begin{equation}\label{eq:frozen_profile_cauchy}
        \begin{split}
            \sup_{t\geq R} &\big\|h_\delta^S(t)-h_\delta^S(R)\big\|_2^2\\
            &\quad \leq \frac{4\left(\ent[g_\delta(S)]+4\kfun_\delta^S(S)\right)}{\log(1+L_2^2)}+4\nata_\delta^S(R)^\frac{1}{2}\left(\kfun_\delta^S(S)^\frac{1}{2}+\frac{2\sqrt{2}L_2\|\nabla \non\|_2}{\sqrt{R}}\right)\\
            &\quad \lesssim_\chi  \frac{\ent[g(1)]+M_V}{\log(1+L_2^2)}+\nata_\delta^S(R)^\frac{1}{2}\left(\sqrt{M_V}+\frac{L_2\|\nabla \non\|_2}{\sqrt{R}}\right).
        \end{split}
    \end{equation}
    Here the third line follows from the second line using \eqref{eq:defocusing_ent_bound} and \eqref{eq:initial_frozen_bound}. The constant here depends only on $\chi$ and the dimension, and is independent of $\delta, L_1, L_2, S$ and $R$. 
\end{proposition}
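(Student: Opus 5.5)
Since the frozen flow \eqref{eq:frozen_PDE} is unitary and the phase $e^{-i\non\log t}$ is unimodular, $\|h_\delta^S(t)\|_2=\|h_\delta^S(R)\|_2=\|g_\delta(S)\|_2$. This gives
\[
\|h_\delta^S(t)-h_\delta^S(R)\|_2^2=2\Re\big(\la h_\delta^S(R),h_\delta^S(R)\ra-\la h_\delta^S(t),h_\delta^S(R)\ra\big).
\]
So it suffices to control the variation in $t\in[R,\infty)$ of the pairing $\Re\la h_\delta^S(t),h_\delta^S(R)\ra$.

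First replace the fixed test function $h_\delta^S(R)$ by its truncation $\psi:=\smt_{L_2}h_\delta^S(R)=e^{-i\non\log R}\smt_{L_2}g_\delta^S(R)$. The equality uses the phase equivariance \eqref{eq:smt_phase_equivariance}. By \eqref{eq:smt_tail} and mass conservation, replacing $h_\delta^S(R)$ by $\psi$ in the pairing at times $t$ and $R$ costs at most $2\sqrt{\mass}\,(\ent[g_\delta^S(R)]/\log(1+L_2^2))^{1/2}$. Lemma \ref{lem:ent_evolution} together with \eqref{eq:frozen_est} gives $\ent[g_\delta^S(R)]\le\ent[g_\delta(S)]+4\kfun_\delta^S(S)$. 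To obtain the squared form in \eqref{eq:frozen_profile_cauchy}, I would rather bound $\|h(t)-h(R)\|_2\le\|h(t)-\psi\|_2+\|\psi-h(R)\|_2$ and square, giving the factor $4$ in the first term. I may need to adjust this bookkeeping. The remaining quantity is $\|h_\delta^S(t)-\psi\|_2^2$, which I expand as $\|h(t)\|^2+\|\psi\|^2-2\Re\la h(t),\psi\ra$. Its value at $t=R$ is exactly $\|h(R)-\psi\|_2^2$, already controlled, so only the time derivative of $\Re\la h_\delta^S(t),\psi\ra$ remains.

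Equivalently, write $\la h_\delta^S(t),\psi\ra=\la g_\delta^S(t),e^{i\non\log(t/R)}\smt_{L_2}g_\delta^S(R)\ra$. Differentiating in $t$, the $t^{-1}\non$ terms cancel exactly, as in the earlier comparison. What is left is
\[
\partial_t\la h_\delta^S,\psi\ra=-it^{-2}\la e^{-i\non\log t}\Delta g_\delta^S,\psi\ra .
\]
I integrate this by parts once, so that the derivative falls on $e^{i\non\log t}\psi$. The result is
\[
t^{-2}\Big(\|\nabla g_\delta^S(t)\|_2\|\nabla\psi\|_2+\log(t)\,\|\nabla g_\delta^S(t)\|_2\|\nabla\non\|_2\|\psi\|_\infty\Big),
\]
with $\|\psi\|_\infty\le2L_2$ and $\|\nabla\psi\|_2\le\|\nabla g_\delta^S(R)\|_2$. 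This integration by parts is the crucial use of the truncation: $\nabla\non\in L^2$ alone pairs against $\nabla g_\delta^S$ only because $\psi$ is bounded.

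The main obstacle is the logarithmic growth $\log t$ coming from the phase derivative. I would avoid it by differentiating the pairing in the corrected frame with the relative phase $e^{i\non\log(t/R)}$. The derivative then carries $\log(t/R)$, and I bound this not by a constant but through Cauchy–Schwarz in time against $\nata_\delta^S(R)^{1/2}$. Integrating over $[R,t]$ and applying Cauchy–Schwarz gives the first contribution:
\[
\int_R^\infty t^{-2}\|\nabla g_\delta^S\|_2\|\nabla\psi\|_2\,dt\le\nata_\delta^S(R)^{1/2}\,\|\nabla g_\delta^S(R)\|_2\,R^{-1/2}\le\nata_\delta^S(R)^{1/2}\kfun_\delta^S(S)^{1/2},
\]
where the last step uses \eqref{eq:frozen_est}. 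For the phase term, $\big(\int_R^\infty t^{-2}\log^2(t/R)\,dt\big)^{1/2}=\sqrt{2}\,R^{-1/2}$, which yields $2\sqrt2L_2\|\nabla\non\|_2R^{-1/2}\nata_\delta^S(R)^{1/2}$. Multiplying both contributions by $2$ from the real-part expansion and collecting terms gives the second line of \eqref{eq:frozen_profile_cauchy}. The third line follows from \eqref{eq:defocusing_ent_bound} and \eqref{eq:initial_frozen_bound}. Convergence then follows: first choose $L_2$ large, then $R$ large, using \eqref{eq:nata_limit} with $S$ fixed.
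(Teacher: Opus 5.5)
Your proposal is correct and follows the paper's proof step by step. You use mass conservation to reduce to a pairing, truncate the fixed test function at level $L_2$ with the entropy bound from Lemma~\ref{lem:ent_evolution} and \eqref{eq:frozen_est}, and let the relative phase $e^{i\non\log(t/R)}$ cancel the frozen coefficient. You then integrate by parts once using $\|\smt_{L_2}g_\delta^S(R)\|_\infty\le 2L_2$, and finish with Cauchy--Schwarz in time against $\nata_\delta^S(R)^{1/2}$.

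Your triangle-inequality-and-square bookkeeping gives the same factor $4$ on both terms as the paper's Young's-inequality step. One intermediate claim needs correcting: $\|\nabla\psi\|_2\le\|\nabla g_\delta^S(R)\|_2$ is false for $\psi=e^{-i\non\log R}\smt_{L_2}g_\delta^S(R)$, and holds only for the unphased $\smt_{L_2}g_\delta^S(R)$. This is exactly why the relative-phase formulation you switch to is needed.
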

\begin{proof}
    Mass conservation gives 
    \begin{equation}\label{eq:cauchy_rewrite}
        \big\|h_\delta^S(t)-h_\delta^S(R)\big\|_2^2=2\Re \left\la h_\delta^S(R)-h_\delta^S(t),h_\delta^S(R)\right\ra.
    \end{equation}
    By phase equivariance \eqref{eq:smt_phase_equivariance}, truncation tail bound \eqref{eq:smt_tail} and entropy bound \eqref{eq:ent_frozen_bound}, 
    \begin{equation}\label{eq:smt_h_cutoff_bound}
       \sup_{t\geq R} \big\|h_\delta^S(t)-\Ttwo h_\delta^S(t)\big\|_2^2=\sup_{t\geq R} \big\|g_\delta^S(t)-\Ttwo g_\delta^S(t)\big\|_2^2\leq \frac{\ent[g_\delta(S)]+4\kfun_\delta^S(S)}{\log(1+L_2^2)};
    \end{equation}
    i.e. the truncation estimate is applied to $g_\delta^S(t)\in H^1$ and no $H^1$ regularity of $h_\delta^S$ is needed or necessarily holds. Hence, expression \eqref{eq:cauchy_rewrite}, \eqref{eq:smt_h_cutoff_bound} and Young's inequality give
    \begin{equation}\label{eq:cauchy_truncated_pair}
        \begin{split}
            \big\|h_\delta^S(t)-h_\delta^S(R)\big\|_2^2\leq 4\big|\Re \left \la h_\delta^S(R)-h_\delta^S(t), \Ttwo h_\delta^S(R)\right\ra \big|+\frac{4\left(\ent[g_\delta(S)]+4\kfun_\delta^S(S)\right)}{\log(1+L_2^2)}.
        \end{split}
    \end{equation}

    Differentiating the inner product in \eqref{eq:cauchy_truncated_pair}, phase equivariance gives
    \begin{equation*}
        \begin{split}
            \partial_t \Re \left \la h_\delta^S(t)-h_\delta^S(R), \Ttwo h_\delta^S(R)\right\ra&=\partial_t \Re \left \la g_\delta^S(t), e^{i\non \log(t/R)}\Ttwo g_\delta^S(R) \right\ra. 
        \end{split}
    \end{equation*}
    Differentiating the real inner product, the time derivative of the phase cancels the frozen multiplication term. Integration by parts then gives 
    \begin{equation}\label{eq:cauchy_rewrite_pair}
        \partial_t \Re \left \la h_\delta^S(t)-h_\delta^S(R), \Ttwo h_\delta^S(R)\right\ra=-t^{-2}\Im\int\nabla g_\delta^S(t)\cdot \overline{\nabla\left(e^{i\non\log(t/R)}\Ttwo g_\delta^S(R)\right)}dy.
    \end{equation}
    This is valid because the truncation gives $\Ttwo g_\delta^S\in H^1\cap L^\infty$: 
    \begin{equation}\label{eq:cauchy_test_bound}
        \|\nabla\left(e^{i\non\log(t/R)}\Ttwo g_\delta^S(R)\right)\|_2 \leq \|\nabla g_\delta^S(R)\|_2+2L_2\log(t/R)\|\nabla\non\|_2.
    \end{equation}
    Hence \eqref{eq:cauchy_rewrite_pair} is justified in integrated form for the $\non$ regularity in $\dot H^1$. The estimate \eqref{eq:cauchy_test_bound} also gives, for $t\geq R$,
    \begin{equation*}
        \big|\partial_t \Re \left \la h_\delta^S(t)-h_\delta^S(R), \Ttwo h_\delta^S(R)\right\ra\big|\leq \frac{\big\|\nabla g_\delta^S(t)\|_2}{t^2}\left(\|\nabla g_\delta^S(R)\|_2+2L_2\log\left(t/R\right)\|\nabla\non\|_2\right).
    \end{equation*}
    Integrating this, H\"older's inequality implies
    \begin{equation}\label{eq:cauchy_pair_est}
        \begin{split}
            \big|\Re \left \la h_\delta^S(t)-h_\delta^S(R), \Ttwo h_\delta^S(R)\right\ra\big|\leq \nata_\delta^S(R)^\frac{1}{2}\frac{\big\|\nabla g_\delta^S(R)\big\|_2}{\sqrt{R}} +\nata_\delta^S(R)^\frac{1}{2}\, \frac{2\sqrt{2}L_2\|\nabla\non\|_2}{\sqrt{R}},
        \end{split}
    \end{equation}
    since
    \[
    \int_R^\infty t^{-2}\log^2(t/R) dt=\frac{1}{R}\int_0^\infty e^{-s}s^2 ds=\frac{2}{R}.
    \]
    \eqref{eq:cauchy_truncated_pair}, \eqref{eq:cauchy_pair_est} and \eqref{eq:frozen_est} prove \eqref{eq:frozen_profile_cauchy}. With $\delta$ and $S$ fixed, first choose $L_2$ large to make the first term in \eqref{eq:frozen_profile_cauchy} small. Then let $R\to\infty$. Using \eqref{eq:nata_limit}, this proves the Cauchy property and strong $L^2$ convergence of the modified profile. 
\end{proof}

\medskip

We are now ready to prove the two defocusing theorems. 
\begin{proof}[Proof of Theorems \ref{thm:Coulomb_defocusing} and \ref{thm:square_defocusing}]
We prove the two theorems together, since the comparison and convergence estimates above apply to both problems. Global well-posedness follows from Proposition \ref{prop:gwp}. The bounds \eqref{eq:defocusing_AB} and Proposition \ref{prop:limiting_field} give a nonnegative limiting field $\Phi_\infty\in\dot H^1$.

We first control the error from the spatial cutoff, uniformly in time. Positivity of the Coulomb kernel gives
\[
    \dob[g(t)]\geq\iint_{|y|,|z|\leq2\delta}
    \frac{|g(t,y)|^2|g(t,z)|^2}{|y-z|}\,dy\,dz
    \geq\frac1{4\delta}\left(\int_{|y|\leq 2\delta}|g(t,y)|^2\,dy\right)^2.
\]
Since $0\leq1-\chi_\delta\leq1$ is supported in $|y|\leq2\delta$, \eqref{eq:nabla_g_grow} implies
\begin{equation}\label{eq:remove_common_cutoff}
    \sup_{t\geq1}\|h(t)-h_\delta(t)\|_2
    =\sup_{t\geq1}\|(1-\chi_\delta)g(t)\|_2
    \leq\left(\frac{8\delta M_V}{\lambda}\right)^{\frac{1}{4}}.
\end{equation}
Given $\varepsilon>0$, first choose $0<\delta<1$ so that the right-hand side is less than $\varepsilon$, and keep $\delta$ fixed. The coefficient $\non$ in \eqref{eq:frozen_coefficient} is then nonnegative and belongs to $\dot H^1$, with a norm that may depend on $\delta$.

\medskip

Next, choose the comparison levels $L_1$ and $L_2/L_1$ large\footnote{By \eqref{eq:defocusing_ent_bound} and \eqref{eq:initial_frozen_bound}, the numerator of the first term in \eqref{eq:frozen_profile_cauchy} is bounded uniformly in $S$. We therefore take $L_2$ large enough to make this term less than $\varepsilon^2/2$ before choosing $S$.}, and then $S$ large, as in Proposition \ref{prop:frozen_comparison_convergence}, so that
\[
    \sup_{t\geq S}\|g_\delta(t)-g_\delta^S(t)\|_2<\varepsilon.
\]

With $\delta$ and $S$ fixed and $L_2$ already chosen sufficiently large, Proposition \ref{prop:frozen_profile_Cauchy} allows us to choose $R\geq S$ so that
\[
    \sup_{t\geq R}\|h_\delta^S(t)-h_\delta^S(R)\|_2<\varepsilon.
\]

Since multiplication by the phase preserves the $L^2$ norm, for $t,s\geq R$,
\begin{equation}\label{eq:transfer_frozen_convergence}
    \begin{split}
        \|h(t)-h(s)\|_2
        &\leq 2\sup_{\tau\geq1}\|h(\tau)-h_\delta(\tau)\|_2
        + 2\sup_{\tau\geq S}\|g_\delta(\tau)-g_\delta^S(\tau)\|_2\\
        &\quad+\|h_\delta^S(t)-h_\delta^S(R)\|_2
        +\|h_\delta^S(s)-h_\delta^S(R)\|_2
        <6\varepsilon.
    \end{split}
\end{equation}
Thus $h(t)$ is Cauchy in $L^2$. Denote its limit by $U_\infty$.

We next identify the limiting field. Since $|h(t)|=|g(t)|$, mass conservation and strong convergence give
\begin{equation}\label{eq:limit_density_identification}
    \|U_\infty\|_2^2=\mass,\qquad
    \big\||g(t)|^2-|U_\infty|^2\big\|_1
    \leq2\sqrt\mass\|h(t)-U_\infty\|_2\longrightarrow0.
\end{equation}
For $\zeta\in C_c^\infty(\R^3)$, the function $|\cdot|^{-1}*|\zeta|$ is bounded. Fubini's theorem and \eqref{eq:limit_density_identification} therefore yield
\[
    \int\phi_g(t)\zeta
    =\int|g(t)|^2\big(|\cdot|^{-1}*\zeta\big)
    \longrightarrow\int|U_\infty|^2\big(|\cdot|^{-1}*\zeta\big)
    =\int\phi_{U_\infty}\zeta.
\]
All pairings are absolutely convergent. On the other hand, \eqref{eq:limiting_field} gives $\phi_g(t)\to\Phi_\infty$ in distributions. Hence
\begin{equation}\label{eq:identified_limiting_field}
    \Phi_\infty=\phi_{U_\infty}\in\dot H^1.
\end{equation}

Finally, the definitions \eqref{eq:g} and \eqref{eq:h} give the modified scattering
\[
    \left\|u(t,x)-t^{-3/2}e^{-i|x|^2/(4t)}
    e^{i\non_\infty(x/t)\log t}U_\infty(x/t)\right\|_{L_x^2}
    =\|h(t)-U_\infty\|_2\longrightarrow0.
\]
Substituting the two expressions for $\non_\infty$ in \eqref{eq:h} proves \eqref{eq:modified_scattering_coulomb} and \eqref{eq:modified_scattering_general}.
\end{proof}

%%%%%%%%%%%%%%%%%%%%%%%%%%%%%%%%%%%%%%%
%%%%%%%%%%%%%%%%%%%%%%%%%%%%%%%%%%%%%%%
%%%%%%%%%%%%%%%%%%%%%%%%%%%%%%%%%%%%%%%

\subsection{Frozen evolution without a sign condition}\label{subsec:frozen_focusing}
This subsection prepares the focusing arguments in Sections \ref{sec:coulomb} and \ref{sec:general}. When $\lambda<0$, the frozen coefficient $\non$ need not be nonnegative, so the energy identity \eqref{eq:K_cutoff_prime} alone does not give \eqref{eq:frozen_est}. 

We instead impose additional control on $\nabla\non$ and derive quantitative convergence estimates without any sign condition on $\non$, while retaining the definitions \eqref{eq:frozen_PDE} and \eqref{eq:h_cutoff}. This additional bound on $\nabla \non$ controls the growth of $\|\nabla g_\delta^S(t)\|_2$ and allows us to control the change in $h_\delta^S(t)$ without amplitude truncation. Sections \ref{sec:coulomb} and \ref{sec:general} will verify these hypotheses for the radial Coulomb problem and for small focusing data with general potentials, respectively.

\begin{proposition}\label{prop:focusing_cauchy_frozen}
    Let $\non\in \dot H^1$ be real. Suppose that, for every $f\in H^1$, $\nabla \non$ satisfies
    \begin{equation}\label{eq:focusing_cauchy_assumption}
        \big\|\left(\nabla \non\right)f\big\|_2\leq a_*\|\nabla f\|_2+b_*\|f\|_2,\qquad 0\leq a_*<1/2,\quad b_*\geq 0.
    \end{equation}
    The frozen solution to \eqref{eq:frozen_PDE} then satisfies, for $t\geq R\geq S$, 
    \begin{equation}\label{eq:focusing_nabla_frozen_bound}
        \big\|\nabla g_\delta^S(t)\big\|_2\leq \left(\frac{t}{R}\right)^{a_*} \left(\big\|\nabla g_\delta^S(R)\big\|_2+b_*\sqrt{\mass}\,\log(t/R)\right),
    \end{equation}
    and 
    \begin{equation}\label{eq:focusing_frozen_cauchy_est}
        \sup_{t\geq R} \big\|h_\delta^S(t)-h_\delta^S(R)\big\|_2^2\leq \frac{C_{a_*}}{R} \left(\big\|\nabla g_\delta^S(R)\big\|_2+b_*\sqrt{\mass}\right)^2.
    \end{equation}
    Hence, $h_\delta^S(t)$ converges in $L^2$ for each fixed $S$. Moreover, if $b_*=0$, we have
    \begin{equation}\label{eq:nata_bound_frozen}
        \nata_\delta^S(R)\leq \frac{\big\|\nabla g_\delta^S(R)\big\|_2^2}{(1-2a_*)R}.
    \end{equation}
\end{proposition}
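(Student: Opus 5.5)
The plan is to derive a differential inequality for $\|\nabla g_\delta^S(t)\|_2$ directly from the frozen equation \eqref{eq:frozen_PDE}. After that, the Cauchy estimate for $h_\delta^S$ follows from the same pairing argument as in Proposition \ref{prop:frozen_profile_Cauchy}, but with no amplitude truncation, because the hypothesis \eqref{eq:focusing_cauchy_assumption} controls $(\nabla\non)g_\delta^S(R)$ directly.

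\emph{Step 1 (gradient growth).} Write $f=g_\delta^S$. Then $\partial_t f=-it^{-2}\Delta f+it^{-1}\non f$. In $\partial_t\|\nabla f\|_2^2=2\Re\la\nabla\partial_tf,\nabla f\ra$ the Laplacian contributes $2t^{-2}\Re\, i\|\Delta f\|_2^2=0$. In the $\non$ term, the part $i t^{-1}\non\nabla f$ also contributes zero because $\non$ is real. This leaves
\[
\partial_t\|\nabla f\|_2^2=-\frac{2}{t}\Im\int(\nabla\non) f\cdot\overline{\nabla f}\,dy\leq\frac{2}{t}\big\|(\nabla\non)f\big\|_2\|\nabla f\|_2 .
\]
Combining \eqref{eq:focusing_cauchy_assumption} with mass conservation \eqref{eq:frozen_mass} gives
\[
\frac{d}{dt}\|\nabla f\|_2\leq\frac{a_*\|\nabla f\|_2+b_*\sqrt\mass}{t}.
\]
In the variable $s=\log t$ this is linear, and Gronwall gives \eqref{eq:focusing_nabla_frozen_bound}; the forcing term is handled using $e^{a_*(s-\sigma)}\leq e^{a_*(s-\log R)}$. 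For $b_*=0$, squaring the bound and integrating $t^{-2}(t/R)^{2a_*}$ over $[R,\infty)$ gives \eqref{eq:nata_bound_frozen}. This is exactly where $a_*<1/2$ is needed.

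\emph{Step 2 (Cauchy estimate).} As in \eqref{eq:cauchy_rewrite}, mass conservation gives
\[
\|h_\delta^S(t)-h_\delta^S(R)\|_2^2=2\Re\la h_\delta^S(R)-h_\delta^S(t),h_\delta^S(R)\ra .
\]
Differentiating in $t$, the phase derivative cancels the frozen multiplication term, and one integration by parts gives, as in \eqref{eq:cauchy_rewrite_pair},
\[
t^{-2}\Im\int\nabla f(t)\cdot\overline{\nabla\big(e^{i\non\log(t/R)}f(R)\big)}\,dy .
\]
We have
\[
\nabla\big(e^{i\non\log(t/R)}f(R)\big)=e^{i\non\log(t/R)}\big(\nabla f(R)+i\log(t/R)(\nabla\non)f(R)\big),
\]
so by \eqref{eq:focusing_cauchy_assumption} its $L^2$ norm is at most $(1+a_*\log(t/R))\|\nabla f(R)\|_2+b_*\sqrt\mass\log(t/R)$. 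Bounding $\|\nabla f(t)\|_2$ by Step 1 and substituting $t=R\tau$, the time integral becomes
\[
R^{-1}\int_1^\infty\tau^{a_*-2}(1+\log\tau)^{3}\,d\tau\,\big(\|\nabla f(R)\|_2+b_*\sqrt\mass\big)^2 .
\]
This integral is finite since $a_*<1$, which gives \eqref{eq:focusing_frozen_cauchy_est} with $C_{a_*}$ depending only on $a_*$. Convergence for fixed $S$ then follows from Step 1: $\|\nabla f(R)\|_2^2/R\to0$ because $\|\nabla f(R)\|_2$ grows at most like $R^{a_*}\log R$ with $a_*<1/2$.

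\emph{Main obstacle.} The main difficulty is justifying these formal computations when $\non$ is only in $\dot H^1$, with $\nabla\non$ controlled only through \eqref{eq:focusing_cauchy_assumption}. I would first run Step 1 for $H^2$ data, using the $H^2$-preserving propagator from Lemma \ref{lem:frozen_est}. There $\Delta f\in L^2$ and $\nabla(\non f)=(\nabla\non)f+\non\nabla f\in L^2$ by the hypothesis together with $\non\in L^6$ and $\nabla f\in L^3$, so the computation holds pointwise in time. The estimate then extends to $H^1$ data by density and $H^1$ continuity of the flow. The identity in Step 2 would be justified in integrated form, as was done for \eqref{eq:cauchy_rewrite_pair}, since the test function lies in $H^1$ by the bound above.
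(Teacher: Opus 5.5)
Your proposal is correct and follows essentially the same route as the paper: the same gradient identity $\partial_t\|\nabla g_\delta^S\|_2^2=-\frac{2}{t}\Im\int g_\delta^S\nabla\non\cdot\overline{\nabla g_\delta^S}$ combined with Gronwall, and then the same untruncated pairing against $e^{i\non\log(t/R)}g_\delta^S(R)$, using the multiplier hypothesis on the test function. Your power $(1+\log\tau)^3$ is only a looser, still integrable, version of the paper's $(1+\log z)^2$, and your $H^2$-density justification is a reasonable expansion of what the paper leaves implicit.
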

\begin{proof}
    Using the frozen equation \eqref{eq:frozen_PDE} and applying integration by parts, we obtain
    \begin{align*}
        \partial_t\big\|\nabla g_\delta^S(t)\big\|_2^2&=\frac{2}{t}\Re\int i\left(\non \nabla g_\delta^S+g_\delta^S\nabla\non\right)\cdot\overline{\nabla g_\delta^S}dy= -\frac{2}{t}\Im\int g_\delta^S\nabla\non \cdot\overline{\nabla g_\delta^S}dy.
    \end{align*}
    H\"older's inequaltiy, \eqref{eq:focusing_cauchy_assumption} and \eqref{eq:frozen_mass} give
    \begin{align*}
        \left| \partial_t\big\|\nabla g_\delta^S(t)\big\|_2^2\right|\leq 2t^{-1} \big\|\nabla g_\delta^S(t)\big\|_2\left(a_*\big\|\nabla g_\delta^S(t)\big\|_2+b_*\sqrt{\mass}\right).
    \end{align*}
    Applying Gronwall's inequality, we get
    \begin{align*}
        \big\|\nabla g_\delta^S(t)\big\|_2 \leq \left(\frac {t}{R}\right)^{a_*}\big\|\nabla g_\delta^S(R)\big\|_2 +b_*\sqrt{\mass}\int_R^t\left(\frac {t}{\tau}\right)^{a_*}\frac{d\tau}{\tau},
    \end{align*}
    which gives \eqref{eq:focusing_nabla_frozen_bound}. When $b_*=0$, integrating the square of bound \eqref{eq:focusing_nabla_frozen_bound} proves \eqref{eq:nata_bound_frozen}.

    The multiplier assumption \eqref{eq:focusing_cauchy_assumption} ensures that the untruncated relative-phase test belongs to $H^1$. Its time derivative lies in $L^2$ because $\non\in L^6$ and the fixed profile belongs to $L^3$. With the untruncated expression \eqref{eq:cauchy_rewrite}, the same computation as in \eqref{eq:cauchy_rewrite_pair} gives
    \begin{equation}
        \begin{split}
            \partial_t \big\|h_\delta^S(t)-h_\delta^S(R)\big\|_2^2 &=2\partial_t \Re \left\la h_\delta^S(R)-h_\delta^S(t), h_\delta^S(R)\right\ra \\
            &=2t^{-2} \Im \int \nabla g_\delta^S(t)\cdot \overline{\nabla \left(e^{i\non\log(t/R)}g_\delta^S(R)\right)}dy.
        \end{split}
    \end{equation}
    H\"older's inequality,  \eqref{eq:focusing_cauchy_assumption} and \eqref{eq:frozen_mass} bound 
    \begin{align*}
        \left|\partial_t \big\|h_\delta^S(t)-h_\delta^S(R)\big\|_2^2\right|\leq 2t^{-2} \big\|\nabla g_\delta^S(t)\big\|_2\left( \big\|\nabla g_\delta^S(R)\big\|_2+\log(t/R) \left(a_* \big\|\nabla g_\delta^S(R)\big\|_2+b_*\sqrt{\mass}\right)\right).
    \end{align*}
    Integrating this and using \eqref{eq:focusing_nabla_frozen_bound}, together with $a_*<1/2$, gives
    \begin{align*}
        \big\|h_\delta^S(t)-h_\delta^S(R)\big\|_2^2 &\leq 2\left(\big\|\nabla g_\delta^S(R) \big\|_2 + b_*\sqrt{\mass}\right)^2 \int_R^t \tau^{-2}\left(\frac{\tau}{R}\right)^{a_*} \left(1+\log\frac{\tau}{R}\right)^2 d\tau\\
        &\leq \frac{2}{R} \left(\big\|\nabla g_\delta^S(R)\big\|_2+b_*\sqrt{\mass}\right)^2 \int_1^\infty z^{a_*-2}(1+\log z)^2 dz.
    \end{align*}
    Since
    \[
    \int_1^\infty z^{a_*-2}(1+\log z)^2\,dz
    =\frac{1}{1-a_*}+\frac{2}{(1-a_*)^2}+\frac{2}{(1-a_*)^3}<\infty,
    \]
    we prove \eqref{eq:focusing_frozen_cauchy_est}. For fixed $\delta$, $S$,  \eqref{eq:focusing_nabla_frozen_bound} gives 
    \[
    \big\|\nabla g_\delta^S(R)\big\|_2\lesssim_S R^{a_*}(1+\log R).
    \]
    Since $a_*<1/2$, \eqref{eq:focusing_frozen_cauchy_est} tends to 0 and proves $L^2$ convergence of $h_\delta^S(t)$ for fixed $S\geq 1$.
\end{proof}

%%%%%%%%%%%%%%%%%%%%%%%%%%%%%%%%%%%%%%%
%%%%%%%%%%%%%%%%%%%%%%%%%%%%%%%%%%%%%%%
%%%%%%%%%%%%%%%%%%%%%%%%%%%%%%%%%%%%%%%
%%%%%%%%%%%%%%%%%%%%%%%%%%%%%%%%%%%%%%%
%%%%%%%%%%%%%%%%%%%%%%%%%%%%%%%%%%%%%%%
%%%%%%%%%%%%%%%%%%%%%%%%%%%%%%%%%%%%%%%

\section{The radial focusing Coulomb problem}\label{sec:coulomb} 
We now prove Theorem \ref{thm:coulomb_focusing}. Throughout this section, we consider the repulsive Coulomb potential $V(x)=\kappa|x|^{-1}$ with $\kappa>0$, $\lambda<0$, and radial initial data $u_0\in\Sigma$. Proposition \ref{prop:gwp} provides a unique global solution $u(t)$, which remains radial by the rotation invariance of \eqref{main} and uniqueness.

Below the mass threshold $|\lambda|\mass<2\kappa$, Newton's formula makes the coercivity of the rescaled energy explicit, yielding a uniform weighted bound. At the endpoint, we recover this bound using the convexity and eventual linear growth of the first radial moment. In both cases, Newton's formula yields uniform convergence of the field away from the origin, allowing a common \emph{quantitative} comparison with the frozen evolution \eqref{eq:frozen_PDE}. We finally construct standing waves at every mass above the threshold.

%%%%%%%%%%%%%%%%%%%%%%%%%%%%%%%%%%%%%%%
%%%%%%%%%%%%%%%%%%%%%%%%%%%%%%%%%%%%%%%
%%%%%%%%%%%%%%%%%%%%%%%%%%%%%%%%%%%%%%%

\subsection{Radial reduction and the mass threshold}\label{subsec:newton}
Since $u$ is radial, we write $u(t,x)=u(t,r)$, where $r=|x|$. The equation \eqref{main} with $V(r)=\kappa /r$ becomes 
\begin{equation}\label{eq:radial_coulomb}
    i\partial_t u-\partial_{rr}u-\frac{2}{r}\partial_r u+\frac{\kappa}{r}u+\lambda \phi_u u=0, \qquad r>0.
\end{equation}
Newton's formula expresses the Coulomb field as
\begin{equation}\label{eq:Newton}
    \phi_u(t,r)=4\pi \int_0^\infty \frac{s^2|u(t,s)|^2}{\max\{r,s\}}\,ds. 
\end{equation}
In particular, radial symmetry gives pointwise control of the field and its derivative using only the mass:
\begin{equation}\label{eq:radial_phi_bound}
    0\leq \phi_u(t,r)\leq \frac{\mass}{r}, \quad 0\leq -\partial_r \phi_u(t,r)=\frac{4\pi}{r^2}\int_0^r s^2|u(t,s)|^2 ds\leq \frac{\mass}{r^2}.
\end{equation}

To remove the first-order radial derivative in \eqref{eq:radial_coulomb}, we consider instead
\begin{equation}\label{eq:v}
    v(t,r):=\sqrt{4\pi}\, ru(t,r).
\end{equation}
Then $v$ satisfies the half-line equation
\begin{equation}\label{eq:v_PDE}
    i\partial_t v-\partial_{rr}v+\frac{\kappa}{r}v+\lambda \varphi_v v=0,\qquad v(t,0)=0,
\end{equation}
where
\begin{equation}
    \varphi_v(t,r)=\int_0^\infty \frac{|v(t,s)|^2}{\max\{r,s\}}ds.
\end{equation}
Here $v(t)\in H_0^1(\dom)$, and the normalization gives
\begin{equation}\label{eq:radial_scale_mass}
    \|v(t)\|_{L^2(\dom)}^2=\|u(t)\|_{L^2(\R^3)}^2=\mass,
\end{equation}
\begin{equation}\label{eq:radial_scale_nabla}
    \|\partial_r v(t)\|_{L^2(\dom)}=\|\nabla u(t)\|_{L^2(\R^3)}, \quad \|rv(t)\|_{L^2(\R_+)}=\|xu(t)\|_{L^2(\R^3)},
\end{equation}
and 
\begin{equation}\label{eq:radial_energy}
    \mathbf{E}=\frac{1}{2}\int_0^\infty |\partial_rv(t,r)|^2 dr+\frac{\kappa}{2}\int_0^\infty \frac{|v(t,r)|^2}{r}dr+\frac{\lambda}{4}\int_0^\infty\int_0^\infty \frac{|v(t,r)|^2|v(t,s)|^2}{\max\{r,s\}}drds.
\end{equation}
The potential terms are finite by the half-line Hardy inequality \eqref{eq:Hardy_half_line}, $v\in H_0^1(\dom)$ and the uniform bound $\varphi_v(t,r)\leq \mass/r$. 

\medskip

We set the corresponding half-line representative of the radial profile $g$ defined in \eqref{eq:g} as
\begin{equation}\label{eq:g_radial}
    \g(t,a):=\sqrt{4\pi}\,ag(t,a) =t^{\frac{1}{2}} e^{i\frac{ta^2}{4}} v(t,ta), \qquad t\geq 1.
\end{equation}
The profile $\g$ satisfies
\begin{equation}\label{eq:g_PDE_radial}
    i\partial_t\g -t^{-2}\partial_{aa}\g=-\frac{1}{t}\left(\frac{\kappa}{a}+\lambda\varphi_\g\right)\g,\qquad \g(t,0)=0,
\end{equation}
with
\begin{equation}
    \varphi_\g(t,a)=\int_0^\infty \frac{|\g(t,b)|^2}{\max\{a,b\}}db.
\end{equation}
The same norm identities give
\begin{equation}\label{eq:radial_profile_scale}
    \|\g(t)\|_{L^2(\R_+)}^2=\mass, \quad \|\partial_a \g(t)\|_{L^2(\dom)}=\|\nabla g(t)\|_{L^2(\R^3)}, \quad\, \|a\g(t)\|_{L^2(\dom)}=\|yg(t)\|_{L^2(\R^3)}.
\end{equation}
Thus, the rescaled energy in \eqref{eq:K} takes the form 
\begin{equation}\label{eq:K_radial}
    \kfun(t)=\frac{\|\partial_a \g(t)\|_{L^2(\dom)}^2}{t}+\kappa\int_0^\infty \frac{|\g(t,a)|^2}{a}da+\frac{\lambda}{2}\mathfrak{D}[\g(t)].
\end{equation}
where $\mathfrak{D}$ is the corresponding Hartree interaction functions
\begin{equation}
    \mathfrak{D}[f]:=\int_0^\infty \varphi_f(a) |f(a)|^2 da, \qquad \varphi_f(a):=\int_0^\infty\frac{|f(b)|^2}{\max\{a,b\}}db.
\end{equation}
Newton's formula bounds these Hartree interaction terms by
\begin{equation}\label{eq:half_line_coulomb_est}
    \varphi_f(a)\leq \frac{\|f\|_{L^2(\dom)}^2}{a}, \qquad \mathfrak{D}[f]\leq \|f\|_{L^2(\dom)}^2 \int_0^\infty\frac{|f(a)|^2}{a}da.
\end{equation}
To treat the threshold case as well, we record the following identity. Throughout the remainder of this section, we write $\|\cdot\|_p$ for the $L^p(\dom)$ norm, while retaining the explicit notation $\|\cdot\|_{L^p(\R^3)}$ for norms on the full space. Also, for half-line functions, primes denote differentiation with respect to the spatial variable.
\begin{lemma}\label{lem:radial_energy_difference}
    For every $f\in H_0^1(\dom)$, we have
    \begin{equation}\label{eq:radial_coulomb_identity}
        \|f\|_2^2\int_0^\infty \frac{|f(a)|^2}{a}da-\mathfrak{D}[f]=\iint_{0<a<b<\infty}\left(\frac{1}{a}-\frac{1}{b}\right)|f(a)|^2|f(b)|^2 dadb\geq 0.
    \end{equation}
    The inequality is strict whenever $f\neq 0$ in $H_0^1(\dom)$.
\end{lemma}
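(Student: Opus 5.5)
The plan is to expand both terms on the left as double integrals over $\dom\times\dom$ and compare them on the two ordered regions $a<b$ and $a>b$. First we check finiteness: for $f\in H_0^1(\dom)$ the half-line Hardy inequality \eqref{eq:Hardy_half_line} together with Cauchy--Schwarz gives $\int_0^\infty a^{-1}|f|^2\,da\leq\|f\|_2\|r^{-1}f\|_2\leq2\|f\|_2\|f'\|_2<\infty$. By \eqref{eq:half_line_coulomb_est}, $\mathfrak D[f]$ is then finite, and every integrand below is nonnegative. Hence Tonelli's theorem justifies all rearrangements, and the diagonal $\{a=b\}$ has measure zero.

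Next we write
\[
\|f\|_2^2\int_0^\infty\frac{|f(a)|^2}{a}\,da=\iint\frac{|f(a)|^2|f(b)|^2}{a}\,da\,db .
\]
By the symmetry $a\leftrightarrow b$, this equals
\[
\iint_{a<b}\left(\frac1a+\frac1b\right)|f(a)|^2|f(b)|^2\,da\,db .
\]
Similarly, $\mathfrak D[f]=\iint\max\{a,b\}^{-1}|f(a)|^2|f(b)|^2$, and after symmetrizing this becomes $\iint_{a<b}\frac{2}{b}|f(a)|^2|f(b)|^2$. Subtracting the two expressions gives the integrand $\bigl(\frac1a+\frac1b-\frac2b\bigr)=\bigl(\frac1a-\frac1b\bigr)$ on $\{a<b\}$. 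This proves the identity, and the nonnegativity follows because $\frac1a-\frac1b>0$ there.

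For strictness, suppose $f\neq0$. We claim $|f|^2$ cannot be concentrated at a single point, so the set $\{a<b\}$ carries positive $|f(a)|^2|f(b)|^2$-measure. Indeed, let $E=\{f\neq0\}$, which has positive Lebesgue measure. Lebesgue points of $E$ give $a_0<b_0$ with $|E\cap(a_0-\epsilon,a_0+\epsilon)|>0$ and $|E\cap(b_0-\epsilon,b_0+\epsilon)|>0$ for some $\epsilon<(b_0-a_0)/2$; any two distinct density points of $E$ work. The product of these two sets lies in $\{a<b\}$, has positive measure, and on it the integrand $(\frac1a-\frac1b)|f(a)|^2|f(b)|^2$ is strictly positive. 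Therefore the integral is strictly positive.

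The only step requiring care is this strictness argument, which needs two disjoint pieces of $E$ of positive measure; it holds because a set of positive measure has at least two distinct density points. The rest is routine Tonelli bookkeeping.
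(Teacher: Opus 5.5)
Your proof is correct and follows the paper's route for the identity: you expand both terms as double integrals and reduce to $\{a<b\}$. The paper does this slightly more directly, writing the difference as $\iint\bigl(\frac1a-\frac1{\max\{a,b\}}\bigr)|f(a)|^2|f(b)|^2\,da\,db$, whose integrand vanishes on $\{a>b\}$, so no term-by-term symmetrization is needed. For strictness, the paper avoids density points by noting that symmetrization gives $\iint_{a<b}|f(a)|^2|f(b)|^2\,da\,db=\frac12\|f\|_2^4>0$, so the strictly positive weight $\frac1a-\frac1b$ is integrated against a measure of positive mass on $\{a<b\}$; your density-point argument is valid but heavier than necessary.
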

\begin{proof}
    H\"older's inequality and Hardy's inequality \eqref{eq:Hardy_half_line} give
    \begin{equation*}
        \int_0^\infty\frac{|f(a)|^2}{a}da\leq 2\|f\|_2\|\partial_a f\|_2
    \end{equation*}
    \eqref{eq:half_line_coulomb_est} proves that both sides are finite. The identity then follows by writing the difference as
    \[
    \int_0^\infty\int_0^\infty\left(\frac{1}{a}-\frac{1}{\max\{a,b\}}\right)|f(a)|^2|f(b)|^2 dadb.
    \]
    Finally, symmetrization in $a,b$ gives
    \[
    \iint_{0<a<b<\infty} |f(a)|^2|f(b)|^2dadb=\frac{1}{2}\|f\|_2^4.
    \]
    For $f\neq 0$ in $H_0^1(\dom)$, this is positive. Since $1/a-1/b$ is strictly positive on $a<b$. This proves the strict inequality.
\end{proof}
By \eqref{eq:radial_coulomb_identity}, we then rewrite \eqref{eq:K_radial} as
\begin{equation}\label{eq:K_radial_rewrite}
    \begin{split}
        \kfun(t)&=\frac{\|\g'(t)\|_2^2}{t}+\left(\kappa-\frac{|\lambda|\mass}{2}\right)\int_0^\infty\frac{|\g(t,a)|^2}{a}da\\
        &\quad\,\,\, +\frac{|\lambda|}{2}\iint_{0<a<b<\infty}\left(\frac{1}{a}-\frac{1}{b}\right) |\g(t,a)|^2|\g(t,b)|^2dadb.
    \end{split}
\end{equation}
Each term on the right is nonnegative whenever $|\lambda|\mass\leq 2\kappa$, which is the mass threshold in Theorem \ref{thm:coulomb_focusing}. Assume that $|\lambda|\mass\leq 2\kappa$. Since the Coulomb potential satisfies $V+x\cdot \nabla V=0$, \eqref{eq:Kprime} yields $\kfun'(t)\leq 0$ for $t\geq 1$ and also
\begin{equation}\label{eq:k_radial_positivity}
    \kfun(t)+\int_1^t \frac{\|\g'(s)\|_2^2}{s^2}ds= \kfun(1). 
\end{equation}
Consequently, for $t\geq 1$, we have
\begin{equation}\label{eq:radial_k_bound}
    0\leq \kfun(t)\leq \kfun(1), \quad \|\g'(t)\|_2^2\leq \kfun(1)t, \quad \int_1^\infty \frac{\|\g'(t)\|_2^2}{t^2}dt\leq \kfun(1).
\end{equation}
When mass is strictly below the threshold, we additionally have
\begin{equation}\label{eq:radial_coulomb_energy}
    \int_0^\infty \frac{|\g(t,a)|^2}{a}da\leq \frac{2\kfun(1)}{2\kappa-|\lambda|\mass},\quad \text{for }|\lambda|\mass<2\kappa.
\end{equation}
At $|\lambda|\mass=2\kappa$, the coefficient of the weighted integral vanishes and we will use the remaining positive double integral to recover a uniform bound for the weighted integral in \eqref{eq:radial_coulomb_energy} in the next subsection. One important gradient is the moment estimate \eqref{eq:moment_bound} from Section \ref{sec:est}. Using $\|a\g(t)\|_2=t^{-1}\|xu(t)\|_{L^2(\R^3)}$, \eqref{eq:moment_bound} yields
\begin{equation}\label{eq:radial_moment_profile}
    \sup_{t\geq 1} \|a\g(t)\|_2\leq \|xu_0\|_{L^2(\R^3)}+2\sqrt{4\mathbf{E}+|\lambda|^2 \mass^3}=:N_V.
\end{equation}

%%%%%%%%%%%%%%%%%%%%%%%%%%%%%%%%%%%%%%%
%%%%%%%%%%%%%%%%%%%%%%%%%%%%%%%%%%%%%%%
%%%%%%%%%%%%%%%%%%%%%%%%%%%%%%%%%%%%%%%

\subsection{The endpoint of the mass threshold}\label{subsec:endpoint}
We handle the endpoint $|\lambda|\mass=2\kappa$ and prove the needed estimates in this subsection. At the endpoint, we recover the uniform bound on $\int a^{-1}|\g(t,a)|^2da$ by showing that at least some fixed positive amount of mass must remain at some radius level.
\begin{lemma}\label{lem:recover_endpt}
    Consider $u_0\neq 0$ with $|\lambda|\mass=2\kappa$. Set
    \begin{equation}\label{eq:J}
        \mathcal{J}(t):=\int_0^\infty r|v(t,r)|^2 dr.
    \end{equation}
    $\mathcal J(t)$ is convex on $[0,\infty)$ and tends to infinity as $t\to \infty$. Thus, there exist constants $c_0>0$, $T_0\geq 1$, depending on $u_0,\lambda,\kappa$, such that $\mathcal{J}(t)\geq c_0t$ for $t\geq T_0$. This then gives
    \begin{equation}\label{eq:radial_recovered_coulomb}
        \sup_{t\geq 1} \int_0^\infty \frac{|\g(t,a)|^2}{a}da\leq \max\left\{2\sqrt{\mass\kfun(1)T_0},\,\, \frac{16\kfun(1)N_V^2}{|\lambda|c_0^2}+\frac{4\mass^2}{c_0} \right\}.
    \end{equation}
\end{lemma}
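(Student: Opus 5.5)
The plan is a virial (first radial moment) argument on the half-line equation \eqref{eq:v_PDE}, followed by a transfer of the resulting linear growth to the profile $\g$ and the positive double integral in \eqref{eq:K_radial_rewrite}.

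\textbf{Step 1: second derivative of $\mathcal J$.} I would compute $\mathcal J''$ using \eqref{eq:v_PDE} with the weight $w(r)=r$, for which $w''=0$ and $w''''=0$. Working in integrated form, justified by $u_0\in\Sigma$ and \eqref{eq:moment_bound}, I expect
\[
\mathcal J''(t)=2|\partial_r v(t,0)|^2+2\int_0^\infty\Big(-\partial_r\Big(\frac{\kappa}{r}+\lambda\varphi_v\Big)\Big)|v|^2\,dr .
\]
The boundary term from the Dirichlet condition is nonnegative; if it is not rigorously available, I would simply drop it as a nonnegative quantity. Write $m(t,r)=\int_0^r|v(t,s)|^2ds$, so that $\partial_r\varphi_v=-m/r^2$ and $\partial_r m=|v|^2$. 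Integrating by parts with $m$ as the variable gives $\int m\,\partial_rm/r^2=\int m^2/r^3$ and $\int\partial_rm/r^2=2\int m/r^3$. Hence
\[
\mathcal J''(t)\;\geq\;2\int_0^\infty\frac{m(2\kappa-|\lambda|m)}{r^3}\,dr=2|\lambda|\int_0^\infty\frac{m(\mass-m)}{r^3}\,dr\;\geq\;0,
\]
using $|\lambda|\mass=2\kappa$. This gives convexity of $\mathcal J$. Note also that $\mathcal J$ is finite, since $\mathcal J\leq\|v\|_2\|rv\|_2$.

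\textbf{Step 2: $\mathcal J\to\infty$.} This is the main obstacle. Suppose instead that $\mathcal J$ stays bounded. A bounded convex function on $[0,\infty)$ is nonincreasing, so $\int_0^\infty\mathcal J''\,dt<\infty$. Two facts then combine.
\begin{itemize}
\item A bounded $\mathcal J$ keeps at least half the mass inside a fixed ball $r\leq R_0$, uniformly in time.
\item The uniform gradient bound \eqref{eq:nabla_u_bound} forbids concentration of the mass into a short interval.
\end{itemize}
Together these should give a uniform positive lower bound for the spread $\int_0^{2R_0}m(\mass-m)\,dr$, which equals $\iint_{a<b}\min(b,2R_0)-\min(a,2R_0)$ weighted by $|v(a)|^2|v(b)|^2$. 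Since $r^{-3}\geq(2R_0)^{-3}$ on this range, this yields $\mathcal J''\geq c>0$, contradicting integrability. The careful point is to make the anti-concentration quantitative using only $\|\partial_rv\|_2$ and $\|v\|_\infty\lesssim\|v\|_2^{1/2}\|\partial_rv\|_2^{1/2}$. Once $\mathcal J\to\infty$, convexity gives some $T$ with $\mathcal J'(T)>0$, and then $\mathcal J(t)\geq\mathcal J(T)+\mathcal J'(T)(t-T)\geq c_0t$ for $t\geq T_0$.

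\textbf{Step 3: the weighted bound.} Changing variables via \eqref{eq:g_radial} gives $\mathcal J(t)=t\int_0^\infty a|\g(t,a)|^2da$. Hence $\int a|\g|^2\geq c_0$ for $t\geq T_0$.
\begin{itemize}
\item \emph{Outer mass.} Set $\beta=c_0/(2\mass)$. Cauchy--Schwarz and \eqref{eq:radial_moment_profile} give $c_0\leq\beta\mass+N_V\big(\int_{b>\beta}|\g|^2\big)^{1/2}$, so $\int_{b>\beta}|\g|^2\geq c_0^2/(4N_V^2)$.
\item \emph{Inner region.} For $a<\beta/2$ and $b>\beta$ we have $1/a-1/b\geq1/(2a)$. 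So the double integral in \eqref{eq:K_radial_rewrite}, bounded by $\kfun(t)\leq\kfun(1)$ via \eqref{eq:k_radial_positivity}, gives $\int_{a<\beta/2}|\g|^2/a\leq16\kfun(1)N_V^2/(|\lambda|c_0^2)$.
\item \emph{Outer region.} Trivially, $\int_{a\geq\beta/2}|\g|^2/a\leq2\mass/\beta=4\mass^2/c_0$.
\item \emph{Early times.} For $1\leq t\leq T_0$, Hardy \eqref{eq:Hardy_half_line} and \eqref{eq:radial_k_bound} give $\int|\g|^2/a\leq2\|\g\|_2\|\g'\|_2\leq2\sqrt{\mass\kfun(1)T_0}$.
\end{itemize}
Combining the early-time bound with the inner and outer estimates yields \eqref{eq:radial_recovered_coulomb}.
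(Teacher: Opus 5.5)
Your Steps 1 and 3 match the paper's proof. Step 1 uses the same virial identity and the same integration by parts in $m$. Step 3 uses the same splitting at $a_0=c_0/(2\mass)$, the outer-mass bound $c_0^2/(4N_V^2)$, the inequality $1/a-1/b\ge 1/(2a)$ against the double integral in \eqref{eq:K_radial_rewrite}, and Hardy's inequality for $t\le T_0$.

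Step 2 is a genuinely different and valid route. The paper uses $\mathcal J''$ only for its sign. It reads off from the endpoint form of $\kfun$ the decay \eqref{eq:radial_decay_double} of the physical double integral, then argues by compactness. If $\mathcal J(t_n)$ stayed bounded, tightness plus Rellich would give $v(t_n)\to q$ in $L^2$ with $\|q\|_2^2=\mass$. Fatou would then make the double integral of $q$ vanish, contradicting the strict positivity in Lemma \ref{lem:radial_energy_difference}.

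You instead extract a uniform positive lower bound on $\mathcal J''$ itself, and your sketch closes easily:
\begin{itemize}
\item By convexity, $\mathcal J\not\to\infty$ forces $\mathcal J$ to be nonincreasing, so $\mathcal J\le\mathcal J(0)$. Hence $m(t,R_0)\ge\mass/2$ with $R_0=2\mathcal J(0)/\mass$.
\item With $v(t,0)=0$ one has $\partial_r m=|v|^2\le 2\|v\|_2\|\partial_r v\|_2\le C$, uniformly in $t$, by \eqref{eq:nabla_u_bound}.
\item Choose $r_1\le R_0$ with $m(t,r_1)=\mass/4$. The Lipschitz bound keeps $m\in[\mass/8,\mass/4]$ on $[r_1-\mass/(8C),r_1]\subset[0,R_0]$.
\item Therefore $\int_0^\infty m(\mass-m)r^{-3}\,dr\ge 3\mass^3/(256\,C R_0^3)$.
\end{itemize}

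Your route is elementary and compactness-free, and in this step it uses only energy conservation, not the rescaled energy. The price is that you need the quantitative inequality $\mathcal J''\ge 2|\lambda|\int_0^\infty m(\mass-m)r^{-3}\,dr$ rigorously, not merely $\mathcal J''\ge 0$. Saying you would drop the boundary term does not justify this, because $\partial_r v(t,0)$ need not exist. The regularized moment $\mathcal J_\varepsilon=\int_0^\infty\sqrt{r^2+\varepsilon^2}\,|v|^2\,dr$ from the paper's footnote supplies it. There the interaction term equals $|\lambda|\int_0^\infty\frac{2r^2+\varepsilon^2}{r^2(r^2+\varepsilon^2)^{3/2}}\,m(\mass-m)\,dr$. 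Its weight is at least $R_0^{-2}(R_0^2+1)^{-1/2}$ on $(0,R_0]$ for $\varepsilon\le 1$, so $\mathcal J_\varepsilon''\ge c>0$ uniformly in $\varepsilon$ and $t$, and $0\le\mathcal J_\varepsilon-\mathcal J\le\varepsilon\mass$ passes this to $\mathcal J$.

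The paper's argument needs only the sign of $\mathcal J''$. It reuses the same positive double integral that later drives Step 3.
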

\begin{proof}
    We first prove convexity. Define the local mass as
    \[
    m(t,r):=\int_0^r |v(t,s)|^2 ds.
    \]
    Formally, differentiating twice and integrating by parts, together with the Dirichlet condition $v(t,0)=0$, give\footnote{To justify convexity without assuming a boundary trace of $v'$, one needs to set $F_\varepsilon(r):=\sqrt{r^2+\varepsilon^2}$ and work with $\mathcal J_\varepsilon(t):=\int_0^\infty F_\varepsilon(r)|v(t,r)|^2dr$. $F_\varepsilon'(0)=0$ and the Dirichlet condition $v(t,0)=0$ eliminate all boundary contributions at the origin. For fixed $\varepsilon>0$, truncate the weight at infinity and approximate $u$ in $C([-T,T];\Sigma)$. Hardy's inequality permits passage to the limit, so the identity below holds in distributions in time. Integration by parts and completing the square give 
    \[
    \mathcal{J}_\varepsilon''(t)=4\int_0^\infty F_\varepsilon'' \left|v'-\frac{v}{r}\right|^2dr+15\int_0^\infty \frac{\varepsilon^4|v|^2}{(r^2+\varepsilon^2)^{7/2}} dr-|\lambda|\int_0^\infty \left(\frac{F_\varepsilon'}{r^2}\right)' m(\mass-m)dr,
    \]
    at the endpoint. Hardy’s inequality ensures the finiteness of these integrals. Since $F_\varepsilon''\geq 0$ and $(F_\varepsilon'/r^2)'<0$, $\mathcal{J}_\varepsilon''\geq 0$. Finally, $0\leq \mathcal{J}_\varepsilon-\mathcal{J}\leq \varepsilon\mass$ passes convexity to $\mathcal{J}$.
    }
    \begin{equation*}
        \mathcal{J}''(t)=2|v'(t,0)|^2+2\int_0^\infty \frac{\kappa-|\lambda|m(t,r)}{r^2}\partial_r m(t,r)dr.
    \end{equation*}
    At the endpoint $2\kappa=|\lambda|\mass$, another integration by parts yields
    \begin{equation*}
        \mathcal{J}''(t)=2|v'(t,0)|^2 +2 |\lambda|\int_0^\infty \frac{m(t,r)\big(\mass-m(t,r)\big)}{r^3}dr\geq 0.
    \end{equation*}
    Hence, $\mathcal{J}(t)$ is convex.

    \medskip
    
    We next show that $\mathcal{J}(t)\to \infty$. At the endpoint, \eqref{eq:K_radial_rewrite}, \eqref{eq:g_radial}, \eqref{eq:k_radial_positivity} give
    \begin{equation}\label{eq:radial_decay_double}
        0\leq \iint_{0<r<s<\infty}  \left(\frac{1}{r}-\frac{1}{s}\right)|v(t,r)|^2|v(t,s)|^2 drds\leq \frac{2\kfun(1)}{|\lambda|t}.
    \end{equation}
    Suppose that $\mathcal J(t_n)$ is bounded along some sequence $t_n\to\infty$. Then 
    \begin{equation}\label{eq:radial_mass_infinity_tail}
    \sup_n \int_R^\infty |v(t_n,r)|^2 dr\leq \frac{\sup_n \mathcal{J}(t_n)}{R}\longrightarrow 0,\quad \text{as }\,R\to \infty.
    \end{equation}
    Since $v(t_n)$ is uniformly bounded in $H_0^1(\dom)$ from \eqref{eq:nabla_u_bound} and scaling \eqref{eq:radial_scale_nabla}, up to passing to subsequences, $v(t_n)$ converges weakly to some $q\in H_0^1(\dom)$. Moreover, Rellich's theorem gives strong $L^2$ convergence on each bounded interval $(0,R)$. The diagonal argument and the uniqueness of limit then prove, up to passing to subsequences, 
    \[
    \int_0^R |v(t_n,r)-q(r)|^2dr \longrightarrow 0,\qquad \text{for every fixed }R.
    \]
    Passing to a further subsequence gives $v(t_n,r)\to q(r)$ almost everywhere. Fatou's lemma and \eqref{eq:radial_mass_infinity_tail} then give
    \[
    \int_R^\infty |q(r)|^2dr \leq\frac{\sup_n \mathcal J(t_n)}{R}.
    \]
    Splitting at $R$ and using $|v-q|^2\leq2|v|^2+2|q|^2$ on the tail, we obtain
    \begin{equation*}
        \limsup_{n\to\infty} \|v(t_n)-q\|_2^2
        \leq \lim_{n\to\infty} \int_0^R|v(t_n,r)-q(r)|^2 dr
        + \frac{4\sup_n \mathcal J(t_n)}{R}
        =\frac{4\sup_n \mathcal J(t_n)}{R}.
    \end{equation*}
    Sending $R\to\infty$ proves convergence on the whole half-line. Hence, $v(t_n)$ converges to $q$ strongly in $L^2$ and $\|q\|_2^2=\mass$ from \eqref{eq:radial_scale_mass} and \eqref{eq:conservation}. Fatou's lemma and \eqref{eq:radial_decay_double} now imply 
    \[
    0\leq \iint_{0<r<s<\infty}  \left(\frac{1}{r}-\frac{1}{s}\right)|v(t,r)|^2|v(t,s)|^2 drds\leq \liminf_{n\to\infty }\frac{2\kfun(1)}{|\lambda|t_n}=0.
    \]
    This contradicts the positivity in Lemma \ref{lem:radial_energy_difference} and hence, $\mathcal{J}(t)\to \infty$. Combined with the fact that $\mathcal{J}(t)$ is also convex, we have 
    \[
    \mathcal{J}(t)\geq c_0t,\quad \text{for some }c_0>0, \,\,\text{ and }t\geq T_0.
    \]

    \medskip

    Finally, we use $\mathcal{J}(t)$ to prove \eqref{eq:radial_recovered_coulomb}. This moment bound $\mathcal{J}(t)$ give a fixed amount of mass away from zero. For $t\geq T_0$, splitting the integral at $a_0=c_0/(2\mass)$ and using the moment estimate \eqref{eq:radial_moment_profile} with scaling \eqref{eq:radial_profile_scale} give
    \begin{equation}\label{eq:radial_fixed_amount}
        c_0\leq \frac{\mathcal{J}(t)}{t}=\int_0^\infty a|\g(t,a)|^2 da\leq \frac{c_0}{2}+N_V\left(\int_{a_0}^\infty |\g(t,a)|^2da \right)^\frac{1}{2},
    \end{equation}
    which implies
    \begin{equation}
        \int_{a_0}^\infty |\g(t,a)|^2da\geq \frac{c_0^2}{4N_V^2}. 
    \end{equation}
    We now use \eqref{eq:K_radial_rewrite} and \eqref{eq:radial_k_bound} on the double integral and the lower bound \eqref{eq:radial_fixed_amount} to control $\int a^{-1}|\g(t,a)|^2da$ near the origin. With
    \[
    \frac{1}{a}-\frac{1}{b}> \frac{1}{2a}, \qquad \text{for }a< \frac{a_0}{2}, \,\, b>a_0,
    \]
    we have
    \begin{equation*}
        \begin{split}
            \frac{2\kfun(1)}{|\lambda|}&\geq \iint_{0<a<b<\infty} \left(\frac{1}{a}-\frac{1}{b}\right)|\g(t,a)|^2|\g(t,b)|^2 dadb\\
            &\geq \int_0^\frac{a_0}{2}\int_{a_0}^\infty \frac{|\g(t,a)|^2}{2a}|\g(t,b)|^2 db\,da \geq \frac{c_0^2}{8N_V^2}\int_0^{\frac{a_0}{2}} \frac{|\g(t,a)|^2}{a}da.
        \end{split}
    \end{equation*}
    When $a\geq a_0/2$, we bound the weighted integral easily and have
    \begin{equation}\label{eq:radial_small_time_coulomb}
        \int_0^\infty \frac{|\g(t,a)|^2}{a}da\leq \frac{16\kfun(1)N_V^2}{|\lambda|c_0^2} +\frac{4\mass}{c_0}\mass,\quad \text{for }t\geq T_0. 
    \end{equation}
    For $1\leq t\leq T_0$, Hardy's inequality \eqref{eq:Hardy_half_line} and \eqref{eq:radial_k_bound} yield
    \begin{equation*}
        \int_0^\infty \frac{|\g(t,a)|^2}{a}da\leq 2\sqrt{\mass}\|\g'(t)\| _2\leq 2\sqrt{\mass \kfun(1)T_0}.
    \end{equation*}
    Together with \eqref{eq:radial_small_time_coulomb}, we prove \eqref{eq:radial_recovered_coulomb}. 
\end{proof}

In summary, together with \eqref{eq:radial_k_bound}, for every radial datum with $|\lambda|\mass\leq 2\kappa$, \eqref{eq:radial_k_bound}, \eqref{eq:half_line_coulomb_est} and Lemma \ref{lem:recover_endpt} give
\begin{equation}\label{eq:radial_uniform_bound}
    \sup_{t\geq 1}\frac{\|\g'(t)\|_2^2}{t}\leq L_V, \quad \sup_{t\geq 1}\int_0^\infty \frac{|\g(t,a)|^2}{a}da\leq L_V,\quad \sup_{t\geq 1}\mathfrak{D}[\g(t)]\leq \mass L_V,
\end{equation}
where $L_V$ is a constant depending on $\lambda$, $\kappa$, and initial datum $u_0$. 

%%%%%%%%%%%%%%%%%%%%%%%%%%%%%%%%%%%%%%%
%%%%%%%%%%%%%%%%%%%%%%%%%%%%%%%%%%%%%%%
%%%%%%%%%%%%%%%%%%%%%%%%%%%%%%%%%%%%%%%

\subsection{Radial modified scattering at and below the mass threshold}\label{subsec:radial_modified_scattering}

We now use \eqref{eq:radial_uniform_bound} to prove the modified scattering \eqref{eq:modified_scattering_coulomb_radial} in Theorem \ref{thm:coulomb_focusing} under the assumption $|\lambda|\mass\leq 2\kappa$. By scaling \eqref{eq:radial_profile_scale} and Newton's formula, the hypotheses of Proposition \ref{prop:limiting_field} hold with $A\leq L_V$, and $B\leq \mass L_V$. Thus, there is a nonnegative limiting field $\Phi_\infty \in \dot H^1$. 

We will show that radial symmetry improves the field convergence to $L^\infty$ away from the origin. This makes the localized field error integrable in time, hence allowing a direct comparison with the frozen evolution without amplitude truncation. Moreover, this improvement gives a quantitative decay rate in \eqref{eq:modified_scattering_coulomb_radial} instead of a qualitative error in the nonradial problem.

\begin{lemma}\label{lem:radial_field_convergence}
    The field $\Phi_\infty$ has a radial representative, and is locally Lipschitz on the half line. Moreover, we have
    \begin{equation}\label{eq:radial_field_convergence}
        \big|\varphi_\g(t,a)-\Phi_\infty(a)\big|\leq 4\sqrt{\mass L_V}a^{-2} t^{-\frac{1}{2}}, \quad \text{for }a>0, \,\, t\geq 1.
    \end{equation}
    Additionally, we have
    \begin{equation}\label{eq:Phi_inf_bound}
        0\leq \Phi_\infty (a)\leq \frac{\mass}{a}, \quad \big|\Phi'_\infty(a)\big|\leq \frac{\mass}{a^2}\quad \text{for almost every }a>0.
    \end{equation}
\end{lemma}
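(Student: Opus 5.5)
The plan is to differentiate the half-line field in time at a fixed radius, using the radial density–flux relation together with Newton's formula. This should give a pointwise bound on $\partial_t\varphi_\g(t,a)$ that is integrable in $t$ with a weight $a^{-2}$. The pointwise limit then exists for every $a>0$, and I would identify it with the $L^2$-limit $\Phi_\infty$ from Proposition \ref{prop:limiting_field}.

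\textbf{Reduction and the field derivative.} First, Newton's formula \eqref{eq:Newton} and \eqref{eq:g_radial} give $\phi_g(t,a)=\varphi_\g(t,a)$ for the full-space field at radius $a$. From \eqref{eq:g_PDE_radial}, the real multiplication terms cancel as in \eqref{eq:density_flux_g}, which gives
\[
\partial_t|\g|^2=2t^{-2}\partial_a\Im(\overline{\g}\,\g').
\]
Pairing this with the bounded Lipschitz test function $b\mapsto 1/\max\{a,b\}$ and integrating by parts, I would use the Dirichlet condition $\g(t,0)=0$ to remove the boundary term at the origin. Since the $b$-derivative of $1/\max\{a,b\}$ is $-b^{-2}\mathbf 1_{b>a}$, this yields
\[
\partial_t\varphi_\g(t,a)=2t^{-2}\int_a^\infty \frac{\Im(\overline{\g}\,\g')(t,b)}{b^2}\,db .
\]
This should be justified in integrated-in-time form. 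I would truncate the test function at large $b$ and pass to the limit, using $\g\in C([1,\infty);H^1_0(\dom))$, $\partial_t\g\in C(H^{-1})$ and mass conservation.

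\textbf{Pointwise convergence.} Cauchy--Schwarz together with \eqref{eq:radial_uniform_bound} gives
\[
|\partial_t\varphi_\g(t,a)|\leq 2t^{-2}a^{-2}\sqrt{\mass}\,\|\g'(t)\|_2\leq 2\sqrt{\mass L_V}\,a^{-2}t^{-3/2}.
\]
Hence $\varphi_\g(t,a)$ converges for every $a>0$ to a limit $\Phi(a)$, uniformly on $\{a\geq\delta\}$. Integrating from $t$ to $\infty$ gives $|\varphi_\g(t,a)-\Phi(a)|\leq 4\sqrt{\mass L_V}\,a^{-2}t^{-1/2}$, which is exactly the constant in \eqref{eq:radial_field_convergence}. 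By Proposition \ref{prop:limiting_field}, $\phi_g(t)\to\Phi_\infty$ in $L^2$ after subtracting a common function. More precisely, $\phi_g(t)-\Phi_\infty\to0$ in $L^2(\R^3)$, so along a sequence $t_n$ we have $\phi_g(t_n)\to\Phi_\infty$ almost everywhere. Therefore $\Phi_\infty=\Phi(|x|)$ almost everywhere, and this is the radial representative.

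\textbf{Bounds on the limit.} For each $t$, \eqref{eq:half_line_coulomb_est} gives $0\leq\varphi_\g(t,a)\leq\mass/a$. Newton's formula also gives
\[
-\partial_a\varphi_\g(t,a)=a^{-2}\int_0^a|\g(t,b)|^2\,db\in[0,\mass/a^2],
\]
so $\varphi_\g(t,\cdot)$ is Lipschitz on $[\delta,\infty)$ with constant $\mass/\delta^2$, uniformly in $t$. These inequalities pass to the pointwise limit $\Phi$. In particular, $|\Phi(a)-\Phi(a')|\leq \mass|a-a'|/\delta^2$ for $a,a'\geq\delta$. Thus $\Phi$ is locally Lipschitz on $(0,\infty)$, it is differentiable almost everywhere by Rademacher's theorem, and $|\Phi_\infty'(a)|\leq\mass/a^2$ with $0\leq\Phi_\infty\leq\mass/a$, which is \eqref{eq:Phi_inf_bound}.

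The main obstacle is the rigorous justification of the differentiated identity for $\varphi_\g(t,a)$ at a fixed point $a$, since $\g'$ is only in $L^2$ and the test function is not compactly supported. I would handle it by writing the density–flux relation in weak integrated form against $\psi_R(b)/\max\{a,b\}$, where $\psi_R$ is a cutoff at scale $R$. The cutoff error is $O(R^{-1}\|\g\|_2\|\g'\|_2)$, which vanishes as $R\to\infty$.
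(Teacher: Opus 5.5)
Your proposal is correct and follows the paper's proof essentially step by step. You use the radial density--flux relation to get $|\partial_t\varphi_\g(t,a)|\le 2\sqrt{\mass L_V}\,a^{-2}t^{-3/2}$, identify the pointwise limit with $\Phi_\infty$ through an a.e.\ convergent sequence, and pass the Newton-formula bounds to the limit. The one cosmetic difference is at the origin: the paper writes $\varphi_\g(t,a)=\mass/a+\int_a^\infty(b^{-1}-a^{-1})|\g(t,b)|^2\,db$ so its test function vanishes on $(0,a)$, whereas you test directly against $1/\max\{a,b\}$. Your choice is also legitimate in the weak $H^{-1}$--$H^1_0$ pairing, since multiplying $\g(t)\in H^1_0(\R_+)$ by a bounded Lipschitz function stays in $H^1_0(\R_+)$.
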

\begin{proof}
    Since $\varphi_\g$ is real-valued, \eqref{eq:g_PDE_radial} gives the density-flux relation
    \begin{equation}\label{eq:radial_density_flux}
        \partial_t |\g(t,a)|^2 =2t^{-2}\partial_a \Im(\overline{\g(t,a)}\g'(t,a)).
    \end{equation}
    Splitting the integral of $\varphi_\g$ at $b=a$, we apply integration by parts and get\footnote{To avoid a boundary trace of $\g'$ at the origin, we write 
    \[
    \varphi_\g(t,a)= \frac{\mass}{a}+\int_a^\infty \left(\frac{1}{b}-\frac{1}{a}\right) |\g(t,b)|^2 db.
    \]
    Testing the density-flux relation with
    $(b^{-1}-a^{-1})\mathfrak{1}_{\{b>a\}}$, by approximation and a cutoff at infinity, is legitimate since $\overline\g\g'\in L^1(\dom)$. Thus $\varphi_\g(\cdot,a)$ is locally absolutely continuous.
    }
    \begin{equation*}
        \partial_t \varphi_\g (t,a)=2t^{-2}\int_a^\infty \frac{\Im (\overline{\g(t,b)}\g'(t,b))}{b^2}db.
    \end{equation*}
    \eqref{eq:radial_uniform_bound} bounds
    \begin{align*}
        \big|\partial_t \varphi_\g(t,a)\big|\leq 2t^{-2}a^{-2}\|\g(t)\|_2\|\g'(t)\|_2\leq 2\sqrt{\mass L_V}a^{-2}t^{-\frac{3}{2}}.
    \end{align*}
    Integrating this proves \eqref{eq:radial_field_convergence}. The limit agrees with $\Phi_\infty$ by the uniqueness of distributional limit in Proposition \ref{prop:limiting_field}, proving that $\Phi_\infty$ is radial. Finally, Newton's formula gives
    \begin{equation}
        0\leq \varphi_\g(t,a)\leq \frac{\mass}{a},\qquad \varphi_\g'(t,a)=-\frac{1}{a^2}\int_0^a |\g(t,b)|^2 db.
    \end{equation}
    The pointwise bound passes directly to $\Phi_\infty$. Moreover, integrating the derivative bound and sending time to infinity, we obtain
    \[
    \big|\Phi_\infty(b)-\Phi_\infty(a)\big|\leq \mass \int_a^b\frac{ds}{s^2}, \qquad 0<a<b.
    \]
    Hence $\Phi_\infty$ is locally Lipschitz, with \eqref{eq:Phi_inf_bound}.
\end{proof}

\medskip

We next use the frozen evolution from Subsection \ref{subsec:frozen_evolution} with the half-line formulation, choosing the frozen coefficient so that Proposition \ref{prop:focusing_cauchy_frozen} applies. Recall from \eqref{eq:h} that
\[
\non_\infty(a)=\frac{\kappa}{a}+ \lambda\Phi_\infty(a).
\]
For $0<\delta<1$, set $\g_\delta:=\chi_\delta \g$, using the radial cutoff from Subsection \ref{subsec:pre}. To control the singularity of $\Phi_\infty$ near the origin, we define
\begin{equation}
    \non(a):=\non_\infty(\max\{a,\delta\}).
\end{equation}
Thus, $\non$ is constant on $(0,\delta)$ and agrees with $\non_\infty$ on the support of $\g_\delta$. 

For $S\geq 1$, we consider the frozen evolution
\begin{equation}\label{eq:radial_frozen_PDE}
    \begin{cases}
        i\partial_t\g_\delta^S-t^{-2}\partial_{aa}\g_\delta^S=-t^{-1}\non \g_\delta^S, \quad &t\geq S, \,\,a>0,\\[6pt]
        \g_\delta^S(t,0)=0,\qquad \g_\delta^S(S)=\g_\delta(S). 
    \end{cases}
\end{equation}
Following \eqref{eq:h} and \eqref{eq:h_cutoff}, we define the corresponding corrected profiles by
\begin{equation}
    \h_\delta(t):=e^{-i\non_\infty\log(t)}\g_\delta(t)=e^{-i\non\log(t)}\g_\delta(t),\qquad \h_\delta^S(t)=e^{-i\non\log(t)}\g_\delta^S(t).
\end{equation}
This is the half-line formulation of the frozen equation \eqref{eq:frozen_PDE} with the coefficient as $\non(|\cdot |)$. Under the radial transformation \eqref{eq:g_radial}, we have
\begin{equation}\label{eq:g_cutoff_radial}
    \g_\delta^S(t,a)=\sqrt{4\pi} \, ag_\delta^S(t,a),
\end{equation}
where $g_\delta^S$ solves \eqref{eq:frozen_PDE} with coefficient $\non(|\cdot |)$. By \eqref{eq:Phi_inf_bound}, spherical coordinates give
\begin{equation}\label{eq:full_space_from_radial}
\begin{split}
    \big\|\non(|\cdot |)\big\|_{L^6(\R^3)}^6&=4\pi \int_0^\infty|\non(a)|^6a^2da\\
    &\leq 4\pi(\kappa+|\lambda|\mathbf M)^6 \left(\delta^{-6}\int_0^\delta a^2da +\int_\delta^\infty a^{-4}da\right)<\infty.
\end{split}
\end{equation}
Since $\non(|\cdot |)$ is real and belongs to $L^6(\R^3)$, Lemma \ref{lem:frozen_est}, \eqref{eq:g_cutoff_radial} and scaling \eqref{eq:radial_profile_scale} give a unique frozen solution $\g_\delta^S\in C([S,\infty); H_0^1(\dom))$ with
\begin{equation}\label{eq:radial_frozen_mass}
    \big\|\g_\delta^S(t)\big\|_2^2\leq \mass. 
\end{equation}
Moreover, \eqref{eq:g_cutoff_radial} and norm identities in \eqref{eq:radial_profile_scale} allow us to use Proposition \ref{prop:focusing_cauchy_frozen} in the half-line formulation. 

We first verify the hypotheses. By Lemma \ref{lem:radial_field_convergence}, we have
\begin{equation*}
    |\non(a)|\leq \frac{\kappa+|\lambda|\mass}{\max\{a,\delta\}},\qquad |\non'(a)|\leq \frac{\kappa+|\lambda|\mass}{a^2}\mathfrak{1}_{\{a>\delta\}}\quad \text{for almost every $a>0$}.
\end{equation*}
Thus, a similar computation as in \eqref{eq:full_space_from_radial} proves the radial coefficient $\non(|\cdot|)$ belongs to $L^6(\R^3)\cap \dot H^1(\R^3)$ with
\begin{equation*}
    \big\|\nabla\non (|\cdot|)\big\|_{L^\infty(\R^3)}\leq (\kappa+|\lambda|\mass)\delta^{-2},
\end{equation*}
and the hypothesis of Proposition \ref{prop:focusing_cauchy_frozen} holds with $a_*=0$ and $b_*=(\kappa+|\lambda|\mass) \delta^{-2}$. Therefore, the following estimates follow from Proposition \ref{prop:focusing_cauchy_frozen} and scaling. 
\begin{corollary}\label{cor:radial_cauchy_frozen}
    For $t\geq R\geq S$, we have
    \begin{equation}\label{eq:radial_gprime_frozen}
       \big\|(\g_\delta^S)'(t)\big\|_2\leq \big\|(\g_\delta^S)'(R)\big\|_2 +(\kappa+|\lambda|\mass)\sqrt{\mass} \delta^{-2}\log(t/R),
    \end{equation}
    and 
    \begin{equation}\label{eq:radial_cauchy_frozen}
       \sup_{t\geq R}\big\|\h_\delta^S(t) -\h_\delta^S(R)\big\|_2^2\lesssim R^{-1}\left(\big\|(\g_\delta^S)'(R)\big\|_2+(\kappa+|\lambda|\mass)\sqrt{\mass}\,\delta^{-2} \right)^2.
    \end{equation}
\end{corollary}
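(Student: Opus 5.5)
The plan is to transfer Proposition \ref{prop:focusing_cauchy_frozen} from the full space to the half line through the correspondence \eqref{eq:g_cutoff_radial}. The first step is to apply the proposition to the full-space frozen solution $g_\delta^S$, which solves \eqref{eq:frozen_PDE} with the radial coefficient $\non(|\cdot|)$. The second step is to translate the resulting estimates using the norm identities \eqref{eq:radial_profile_scale}.

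For the hypothesis \eqref{eq:focusing_cauchy_assumption}, we already know that $\non(|\cdot|)\in L^6(\R^3)\cap\dot H^1(\R^3)$ is real. We also have $\|\nabla\non(|\cdot|)\|_{L^\infty(\R^3)}\leq(\kappa+|\lambda|\mass)\delta^{-2}$, which follows from $|\non'(a)|\leq(\kappa+|\lambda|\mass)a^{-2}\mathfrak 1_{\{a>\delta\}}$. Hence
\[
\|(\nabla\non)f\|_2\leq(\kappa+|\lambda|\mass)\delta^{-2}\|f\|_2,
\]
so the hypothesis holds with $a_*=0$ and $b_*=(\kappa+|\lambda|\mass)\delta^{-2}$.

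Proposition \ref{prop:focusing_cauchy_frozen} then yields \eqref{eq:focusing_nabla_frozen_bound} with $(t/R)^{a_*}=1$:
\[
\|\nabla g_\delta^S(t)\|_{L^2(\R^3)}\leq\|\nabla g_\delta^S(R)\|_{L^2(\R^3)}+b_*\sqrt{\mass}\log(t/R).
\]
It also yields \eqref{eq:focusing_frozen_cauchy_est}, with constant $C_0$ from the explicit integral $\int_1^\infty z^{-2}(1+\log z)^2dz=5$.

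It remains to identify the quantities in these bounds. Since $g_\delta^S(t)$ is radial, the identity $\|\partial_a(\sqrt{4\pi}\,a f)\|_{L^2(\dom)}=\|\nabla f\|_{L^2(\R^3)}$ converts the gradient norms into $\|(\g_\delta^S)'\|_2$. This identity holds for radial $f\in H^1$ with the Dirichlet trace, by the same integration by parts behind \eqref{eq:radial_profile_scale}. The map $f\mapsto\sqrt{4\pi}\,af$ is an $L^2$ isometry and commutes with multiplication by the radial phase $e^{-i\non\log t}$. Therefore $\|h_\delta^S(t)-h_\delta^S(R)\|_{L^2(\R^3)}=\|\h_\delta^S(t)-\h_\delta^S(R)\|_2$. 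Substituting these identities gives \eqref{eq:radial_gprime_frozen} and \eqref{eq:radial_cauchy_frozen}.

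The main point to check is that the half-line frozen solution of \eqref{eq:radial_frozen_PDE} really is $\sqrt{4\pi}\,a\,g_\delta^S$. The full-space solution with radial datum $g_\delta(S)=\chi_\delta g(S)$ stays radial by uniqueness in Lemma \ref{lem:frozen_est} and rotation invariance of the coefficient. The substitution $\g=\sqrt{4\pi}\,a g$ turns $\Delta$ into $\partial_{aa}$ with the Dirichlet condition. Uniqueness for \eqref{eq:radial_frozen_PDE} then identifies the two solutions. Everything else is a direct quotation of the earlier proposition.
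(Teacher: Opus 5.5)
Your proposal is correct and follows essentially the same route as the paper. The paper likewise verifies \eqref{eq:focusing_cauchy_assumption} with $a_*=0$ and $b_*=(\kappa+|\lambda|\mass)\delta^{-2}$ from the $L^\infty$ bound on $\nabla\non(|\cdot|)$, applies Proposition \ref{prop:focusing_cauchy_frozen} to the radial full-space frozen solution, and transfers the resulting bounds to the half line through \eqref{eq:g_cutoff_radial} and the norm identities \eqref{eq:radial_profile_scale}; your remarks on radiality, the phase commuting with the isometry, and the constant ($C_0=2\cdot 5$) are consistent with this and only make the paper's ``by scaling'' step more explicit.
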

With \eqref{eq:radial_uniform_bound}, we further bound
\begin{equation}
    \big\|\g_\delta'(S)\|_2\leq C_\chi \|\g'(S)\|_2\leq C_\chi\sqrt{L_V S}. 
\end{equation}
\eqref{eq:radial_gprime_frozen} then implies
\begin{equation*}
    \big\|(\g_\delta^S)'(R)\big\|_2\leq C_\chi\sqrt{L_V S}+(\kappa+|\lambda|\mass)\sqrt{\mass} \delta^{-2}\log(R/S).
\end{equation*}
Plugging this in \eqref{eq:radial_cauchy_frozen}, we obtain
\begin{equation}\label{eq:radial_cauchy_frozen_final}
    \sup_{t\geq R}\big\|\h_\delta^S(t) -\h_\delta^S(R)\big\|_2\lesssim_\chi\sqrt{L_V\frac{S}{R}}+(\kappa+|\lambda|\mass)\sqrt{\mass}\delta^{-2}\frac{1}{\sqrt{R}}\left(1+\log\frac{R}{S}\right).
\end{equation}

\medskip

We are now ready to prove the modified scattering \eqref{eq:modified_scattering_coulomb_radial}. 
\begin{proof}[Proof of Theorem \ref{thm:coulomb_focusing} for $|\lambda|\mass\leq 2\kappa$] We first compare the localized profile with the frozen evolution. Fix $0<\delta<1$. Since $\non=\non_\infty$ on the support of $\g_\delta$, \eqref{eq:g_PDE_radial} gives 
\begin{equation}\label{eq:radial_g_PDE_cutoff}
    i\partial_t\g_\delta-t^{-2}\partial_{aa}\g_\delta= -t^{-1} \non\g_\delta -\lambda t^{-1}(\varphi_\g-\Phi_\infty)\g_\delta- t^{-2}\big(2\chi_\delta' \g' +\chi_\delta'' \g\big).
\end{equation}
The last two terms measure the field error and the error from commuting the spatial with the Laplacian. Hardy's inequality \eqref{eq:Hardy_half_line} gives
\begin{align*}
    \big\|\chi_\delta' \g'(t) +\chi_\delta'' \g(t)\big\|_2\lesssim_\chi \delta^{-1}\left(\|\g'(t)\|_2+\|a^{-1}\g(t)\|_2\right)\lesssim_\chi \delta^{-1}\|\g'(t)\|_2.
\end{align*}
Together with \eqref{eq:radial_field_convergence}
and \eqref{eq:radial_uniform_bound}, this yields
\begin{align*}
    \big\|t^{-1}\lambda (\varphi_\g-\Phi_\infty)\g_\delta+t^{-2}\big(2\chi_\delta' \g' +\chi_\delta'' \g\big)\big\|_2\lesssim_\chi t^{-\frac{3}{2}}\sqrt{L_V}\delta^{-1}\left(1+|\lambda|\mass\delta^{-1} \right),
\end{align*}
which is integrable in time. Comparing \eqref{eq:radial_frozen_PDE} and \eqref{eq:radial_g_PDE_cutoff}, using their common initial data at $S$ and the unitarity of the frozen propagator, we bound
\begin{equation}\label{eq:radial_frozen_comparison}
    \sup_{t\geq S}\big\|\h_\delta^S(t)-\h_\delta(t)\big\|_2=\sup_{t\geq S}\big\|\g_\delta^S(t)-\g_\delta(t)\big\|_2\lesssim_\chi \sqrt{L_V}\delta^{-1}\left(1+|\lambda|\mass\delta^{-1} \right)S^{-\frac{1}{2}}.
\end{equation}

\medskip

We next remove the spatial cutoff. Define
\begin{equation}\label{eq:h_radial}
    \h(t,a)=e^{-i\non_\infty(a)\log t}\g(t,a),
\end{equation}
then $\h_\delta=\chi_\delta\h$. Since $1-\chi_\delta$ vanishes for $a\geq 2\delta$, \eqref{eq:radial_uniform_bound} gives
\begin{equation}\label{eq:radial_remove_cutoff}
    \|\h(t)-\h_\delta(t)\|_2^2\leq \int_0^{2\delta} |\g(t,a)|^2 da \leq 2\delta \int_0^\infty \frac{|\g(t,a)|^2}{a} da \leq 2\delta L_V.
\end{equation}

For $t\geq s\geq R\geq S$, the triangle inequality, \eqref{eq:radial_cauchy_frozen_final}, \eqref{eq:radial_frozen_comparison} and \eqref{eq:radial_remove_cutoff} therefore give
\begin{equation}\label{eq:radial_h_final_est}
    \begin{split}
        \big\|\h(t)-\h(s)\big\|_2&\lesssim_\chi \sqrt{L_V}\left(\delta^{\frac{1}{2}}+(|\lambda|\mass\delta^{-2}+\delta^{-1})S^{-\frac{1}{2}}+\sqrt{\frac{S}{R}}\right)\\
        &\qquad+(\kappa+|\lambda|\mass) \sqrt{\mass}\delta^{-2}R^{-\frac{1}{2}}\left(1+\log\frac{R}{S}\right).
    \end{split}
\end{equation}
Here, the implicit constant and $L_V$ are uniform in $\delta,S,R,t,s$. For $R\geq 2$, we choose 
\[
\delta=R^{-\frac{1}{6}}, \qquad S=R^\frac{5}{6}.
\]
The first three contributions in \eqref{eq:radial_h_final_est} are bounded by $R^{-\frac{1}{12}}$, up to multiplication by a constant, while 
\[
    \delta^{-2}R^{-\frac{1}{2}}\left(1+\log\frac{R}{S}\right)\lesssim R^{-\frac{1}{6}}(1+\log R) \lesssim R^{-\frac{1}{12}}.
\]
Hence $\h(t)$ is Cauchy in $L^2(\dom)$. Denoting its limit by $\h_\infty$, we obtain
\begin{equation}\label{eq:radial_h_convergence_rate}
    \|\h(t)-\h_\infty\|_2\lesssim_\chi \left[(1+|\lambda|\mass)\sqrt{L_V}+(\kappa+|\lambda|\mass)\sqrt{\mass}\right]t^{-\frac{1}{12}}, \qquad t\geq2.
\end{equation}
Since $\|\h(t)\|_2^2=\|\g(t)\|_2^2=\mass$, strong convergence also gives $\|\h_\infty\|_2^2=\mass$. 

\medskip

We next identify the limiting field using Newton's formula. For every fixed $a>0$, we have
\begin{align*}
    \left|\varphi_\g(t,a) -\int_0^\infty \frac{|\h_\infty(b)|^2}{\max\{a,b\}} db\right| \leq \frac{1}{a}\big\| |\h(t)|^2-|\h_\infty|^2\big\|_1\leq \frac{2\sqrt{\mass}}{a}\|\h(t)-\h_\infty\|_2\,\longrightarrow 0.
\end{align*}
Together with Lemma \ref{lem:radial_field_convergence}, we prove
\begin{equation*}
    \Phi_\infty(a)=\int_0^\infty \frac{|\h_\infty(b)|^2}{\max\{a,b\}} db, \quad a>0.
\end{equation*}

Finally, define the full-space profile by
\begin{equation}
    U_\infty(y):=\frac{\h_\infty(|y|)}{\sqrt{4\pi}|y|}, \quad \text{for }y\neq 0.
\end{equation}
Then $U_\infty$ is radial with $\|U_\infty\|_{L^2(\R^3)}^2=\|\h_\infty\|_2^2=\mass$ and Newton's formula gives $\phi_{U_\infty}=\Phi_\infty \in \dot H^1(\R^3)$. Using \eqref{eq:v} and \eqref{eq:g_radial}, we obtain
\begin{equation}
    \begin{split}
        \left\|u(t,x)-t^{-3/2}e^{-i|x|^2/(4t)}e^{i(\kappa t/|x|+ \lambda\Phi_\infty(x/t))\log t}U_\infty(x/t)\right\|_{L^2(\R^3)}&=\|\h(t)-\h_\infty\|_2.
    \end{split}
\end{equation}
Estimate \eqref{eq:radial_h_convergence_rate} proves \eqref{eq:modified_scattering_coulomb_radial}.  
\end{proof}

%%%%%%%%%%%%%%%%%%%%%%%%%%%%%%%%%%%%%%%
%%%%%%%%%%%%%%%%%%%%%%%%%%%%%%%%%%%%%%%
%%%%%%%%%%%%%%%%%%%%%%%%%%%%%%%%%%%%%%%

\subsection{Standing waves above the threshold}\label{subsec:standing_wave}
We now assume $|\lambda|\mass>2\kappa$ and construct a standing wave at each such mass. 

For $f\in H_0^1(\dom)$, we write the energy in \eqref{eq:radial_energy} as a functional of $f$ and set
\begin{equation}\label{eq:energy_functional}
    \mathbf{E}[f]:=\frac{1}{2}\|f'\|_2^2+\frac{\kappa}{2}\int_0^\infty \frac{|f(r)|^2}{r}dr+\frac{\lambda}{4}\mathfrak{D}[f]. 
\end{equation}
We also define
\begin{equation}
    \mathfrak{s}(\mass):=\inf\left\{\mathbf{E}[f] \,\big|\,f\in H_0^1(\R_+), \,\,\|f\|_2^2=\mass \right\}.
\end{equation}
For $\|f\|_2^2=\mass$, Lemma \ref{lem:radial_energy_difference} gives the exact decomposition
\begin{equation}\label{eq:energy_rewrite}
    \begin{split}
        \mathbf E[f]&=\frac12 \|f'\|_2^2+ \frac{2\kappa-|\lambda|\mass}{4} \int_0^\infty\frac{|f(r)|^2}{r}dr\\ &\quad +\frac{|\lambda|}{4} \iint_{0<r<s<\infty}\left(\frac{1}{r}-\frac{1}{s}\right) |f(r)|^2 |f(s)|^2 drds.
    \end{split}
\end{equation}
Thus the energy is positive for nonzero functions when $|\lambda|\mass\leq 2\kappa$. Above the threshold, the following proposition produces a minimizer with negative energy.
\begin{proposition}\label{prop:stationary_solution}
    For every $\mass$ with $|\lambda|\mass>2\kappa$, $\mathfrak{s}(\mass)<0$ is attained by a nonnegative $q\in H_0^1(\dom)$ with $\|q\|_2^2=\mass$. Moreover, there exists $\omega>0$ such that
    \begin{equation}\label{eq:stationary_equation}
        -q''+\frac{\kappa}{r}q+\lambda\varphi_q q+\omega q =0
    \end{equation}
    in distributional sense on $\R_+$. Additionally, we have $rq\in L^2(\dom)$.
\end{proposition}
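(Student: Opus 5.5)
The plan follows the standard concentration-compactness route for a constrained minimization, adapted to the half-line. It proceeds in five steps.

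\emph{Step 1: negativity of the infimum.} We show $\mathfrak s(\mass)<0$ using the decomposition \eqref{eq:energy_rewrite}. Take a fixed nonnegative $f_0\in C_c^\infty(1,2)$ normalized so that $\|f_0\|_2^2=\mass$, and consider the mass-preserving dilations $f_\mu(r)=\mu^{1/2}f_0(\mu r)$. Under these,
\begin{equation*}
\|f_\mu'\|_2^2=\mu^2\|f_0'\|_2^2,
\end{equation*}
while both the weighted integral and the double integral scale like $\mu$. The coefficient of the weighted integral, $(2\kappa-|\lambda|\mass)/4$, is negative. To make the linear-in-$\mu$ part negative, we first take $f_0$ concentrated in a thin shell near $r=1$, say supported in $(1,1+\eta)$. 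For such $f_0$ the double integral is $O(\eta)\mass^2$, since $1/r-1/s\le \eta$ on the support, whereas $\int|f_0|^2/r\ge \mass/(1+\eta)$. Hence for $\eta$ small the coefficient of $\mu$ is negative. Then taking $\mu$ small makes $\mathbf E[f_\mu]<0$.

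\emph{Step 2: a bounded minimizing sequence.} Lower boundedness and coercivity come from \eqref{eq:half_line_coulomb_est} and Hardy's inequality \eqref{eq:Hardy_half_line}:
\begin{equation*}
\mathbf E[f]\ge \tfrac12\|f'\|_2^2-C\|f'\|_2 .
\end{equation*}
So minimizing sequences are bounded in $H_0^1$, and $\mathfrak s(\mass)>-\infty$. Replacing $f$ by $|f|$ does not increase the energy, so we may assume the sequence $f_n\ge0$. Pass to a weak limit $q$ in $H_0^1$; by \eqref{eq:rellich} the convergence is strong in $L^2(0,R)$ for every $R$.

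\emph{Step 3 (main obstacle): no mass escapes to infinity.} We must exclude loss of mass at infinity, since $\R_+$ is not compact and dilations do not preserve the constraint automatically. My approach is to prove strict subadditivity-type behaviour directly. The potential terms $\int|f|^2/r$ and $\mathfrak D[f]$ are weakly continuous along bounded $H^1_0$ sequences, because their kernels decay at infinity and are controlled near $0$ by Hardy. So if $\|q\|_2^2=m<\mass$, lower semicontinuity of the kinetic term gives
\begin{equation*}
\mathfrak s(\mass)\ge \mathbf E[q]+\liminf \tfrac12\|(f_n-q)'\|_2^2\ge \mathbf E[q].
\end{equation*}
First, $q\neq0$: if $q=0$, the potential terms vanish in the limit and $\mathfrak s(\mass)\ge0$, a contradiction. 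Then we rescale $q$ to full mass by $\tilde q=\theta q$ with $\theta^2=\mass/m>1$. Because the interaction term is quartic and negative with $\lambda<0$, we get
\begin{equation*}
\mathbf E[\theta q]<\theta^2\mathbf E[q]\le \theta^2\mathfrak s(\mass)<\mathfrak s(\mass),
\end{equation*}
where the last inequality uses $\mathfrak s(\mass)<0$. The strictness requires $\mathfrak D[q]>0$, which holds since $q\neq0$. This contradiction forces $\|q\|_2^2=\mass$, so $q$ is a minimizer with $q\ge0$. The delicate point is the weak continuity of $\mathfrak D$ and of the Coulomb term; I would split at small and large radii, using Hardy near $0$ and the kernel bound $\le 1/R$ at infinity.

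\emph{Step 4: the Euler--Lagrange equation and the sign of $\omega$.} The Lagrange multiplier rule on the constraint $\|f\|_2^2=\mass$ gives \eqref{eq:stationary_equation} in distributions, with multiplier $\omega$. Pairing the equation with $q$ gives
\begin{equation*}
\omega\mass=-\|q'\|_2^2-\kappa\int|q|^2/r-\lambda\mathfrak D[q]=-4\mathbf E[q]+\|q'\|_2^2+\kappa\int|q|^2/r>0,
\end{equation*}
since $\mathbf E[q]<0$.

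\emph{Step 5: the moment bound.} Finally, $rq\in L^2$ follows from exponential decay, using $\omega>0$ together with $\kappa/r+\lambda\varphi_q\to0$ as $r\to\infty$. Concretely, I would test the equation against $e^{2\epsilon r}\zeta_R^2 q$, with $\zeta_R$ a cutoff, and apply an Agmon-type energy estimate. For $\epsilon<\sqrt\omega$ and large $r$ this yields $e^{\epsilon r}q\in L^2$ uniformly in $R$.
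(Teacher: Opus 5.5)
Your proposal is correct. Steps 1--3 are essentially the paper's argument:
\begin{itemize}
\item a thin shell near $r=1$ followed by the mass-preserving dilation gives $\mathfrak{s}(\mass)<0$ (you phrase this through \eqref{eq:energy_rewrite}, the paper through the direct bound $\mathfrak{D}[f]\geq \mass^2/(1+\eta)$, which is the same computation);
\item coercivity gives a bounded minimizing sequence;
\item the potential terms pass to the limit by splitting at small and large radii (your use of Hardy near $0$, $\int_0^\delta |f|^2/r\,dr\leq 4\delta\|f'\|_2^2$, replaces the paper's pointwise bound $|f_n(r)|^2\leq L^2 r$);
\item the rescaling $\theta q$ with $\theta>1$ excludes loss of mass.
\end{itemize}

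The genuine differences are in the last two steps.

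For $\omega>0$, you test the Euler--Lagrange equation against $q$ and eliminate $\lambda\mathfrak{D}[q]$ using the energy. This gives $\omega\mass=-4\mathbf{E}[q]+\|q'\|_2^2+\kappa\int_0^\infty |q|^2/r\,dr>0$, because $\mathbf{E}[q]=\mathfrak{s}(\mass)<0$. The paper instead uses stationarity of $\nu\mapsto\mathbf{E}[q_\nu]$ at $\nu=1$ (a Pohozaev-type identity) to get $\omega\mass=3\|q'\|_2^2+\kappa\int_0^\infty|q|^2/r\,dr$. Your route is shorter and reuses Step 1; the paper's does not rely on the sign of the minimal energy.

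For the decay, the paper upgrades $q$ to $C^2$, uses $q\geq 0$ and $q''\geq\frac{\omega}{2}q$ for $r\geq R_0$, and compares with an explicit exponential by the maximum principle, obtaining a pointwise bound. Your Agmon estimate needs neither regularity nor the sign of $q$. The region $r<R_0$ is controlled by Hardy, since $|\kappa/r+\lambda\varphi_q|\leq(\kappa+|\lambda|\mass)/r$.

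One technical correction in Step 5. Multiplying $e^{2\epsilon r}$ by a compactly supported cutoff $\zeta_R^2$ produces the commutator term $\int e^{2\epsilon r}|\zeta_R'|^2|q|^2$ on $[R,2R]$, and this is not bounded uniformly in $R$. Use instead the bounded Lipschitz weight $\Psi_N=e^{\epsilon\min\{r,N\}}$. Then $\Psi_N^2q\in H_0^1(\dom)$ is an admissible test function and $|\Psi_N'|\leq\epsilon\Psi_N$. The identity $\int|(\Psi_Nq)'|^2+\int\bigl(\kappa/r+\lambda\varphi_q+\omega-(\Psi_N'/\Psi_N)^2\bigr)\Psi_N^2|q|^2=0$ then bounds $\int_{R_0}^\infty\Psi_N^2|q|^2$ uniformly in $N$ whenever $\epsilon^2<\omega/2$, which gives $e^{\epsilon r}q\in L^2$ and hence $rq\in L^2(\dom)$.
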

\begin{proof}
    We first show $\mathfrak{s}<0$. Choose $\eta>0$ sufficiently small with $|\lambda|\mass>2\kappa(1+\eta)$ and take $f\in C_c^\infty(1,1+\eta)$ with $\|f\|_2^2=\mass$. Then
    \[
    \int_0^\infty\frac{|f(r)|^2}{r}dr\leq \mass, \qquad \mathfrak D[f]=\int_0^\infty\int_0^\infty \frac{|f(r)|^2|f(s)|^2}{\max\{r,s\}}drds\geq\frac{\mass^2}{1+\eta}.
    \]
    This way, the sum of the two potential energy terms in \eqref{eq:energy_functional} is negative. The mass-preserving dilation $f_\nu(r):=\nu^{1/2}f(\nu r)$ satisfies  
    \begin{equation}\label{eq:energy_f_nu}
        \mathbf E[f_\nu]=\frac{\nu^2}{2} \|f'\|_2^2 + \nu\left(\frac{\kappa}{2} \int_0^\infty\frac{|f(r)|^2}{r}dr +\frac{\lambda}{4} \mathfrak D[f]\right).
    \end{equation}
    Hence, choosing $\nu>0$ sufficiently small makes $\mathbf{E}[f_\nu]$ negative. On the other hand, Hardy's inequality and \eqref{eq:half_line_coulomb_est} give, 
    \begin{equation}\label{eq:energy_lower_bound}
        \mathbf{E}[f]\geq \frac{1}{2}\|f'\|_2^2-\frac{|\lambda|}{2}\mass^\frac{3}{2}\|f'\|_2\geq \frac{1}{2}\left(\|f'\|_2-\frac{|\lambda|\mass^\frac{3}{2}}{2}\right)^2-\frac{|\lambda|^2\mass^3}{8}\geq -\frac{|\lambda|^2\mass^3}{8}.
    \end{equation}
    Thus $-\infty<\mathfrak{s}(\mass)<0$. Let $(f_n)$ be a minimizing sequence and \eqref{eq:energy_lower_bound} guarantees that $(f_n)$ is uniformly bounded in $H_0^1(\dom)$. By Banach–Alaoglu theorem, Rellich's theorem and uniqueness of limit, up to passing to subsequences, there exists $q\in H_0^1(\dom)$ such that
    \begin{equation}\label{eq:minimizer_limit}
        f_n \rightharpoonup q \quad \text{in }H_0^1(\dom), \qquad f_n\longrightarrow q \quad \text{in } L^2(0,R) \text{ for every }R<\infty,
    \end{equation}
    with $\|q\|_2^2\leq \mass$. 

    \medskip

    We next show that $q$ attains $\mathfrak{s}(\mass)$. Write $L:=\sup_n \|f_n'\|_2$, then weak convergence gives $\|q'\|_2\leq L$. For $r>0$, H\"older inequality gives
    \begin{equation*}
        |f_n(r)|^2=\left|\int_0^r f_n'(s)ds\right|^2\leq r\int _0^r |f_n'(s)|^2 ds\leq L^2r,\qquad |q(r)|^2\leq L^2r.
    \end{equation*}
    Consequently, by splitting the integral into three parts at $\delta$ and $\delta^{-1}$, we have
    \begin{equation*}
        \begin{split}
            \int_0^\infty \frac{\left| |f_n(r)|^2-|q(r)|^2\right|}{r}dr&\leq \int_0^\delta 2L^2 dr+\frac{2\sqrt{\mass}}{\delta}\left(\int_\delta^{\delta^{-1}}|f_n(r)-q(r)|^2dr\right)^\frac{1}{2}+ 2\mass \delta\\
            &\leq 2L^2\delta+\frac{2\sqrt{\mass}}{\delta}\|f_n-q\|_{L^2(0,\delta^{-1})}+2\mass \delta. 
        \end{split}
    \end{equation*}
    By \eqref{eq:minimizer_limit}, taking $n\to\infty$ and then $\delta\downarrow0$, we obtain 
    \begin{equation}\label{eq:minimizer_coulomb_energy}
        \int_0^\infty \frac{\left| |f_n(r)|^2-|q(r)|^2\right|}{r}dr\longrightarrow 0, \quad \text{as }n\to \infty. 
    \end{equation}
    By symmetry of the Newton kernel and $0\leq\varphi_{f_n}(r),\varphi_q(r)\leq\mass/r$,
    \begin{align*}
        \big|\mathfrak D[f_n]-\mathfrak D[q]\big|&= \left|\int_0^\infty(\varphi_{f_n}(r)+\varphi_q(r))(|f_n(r)|^2-|q(r)|^2) dr\right|\\
        &\quad \leq 2\mass \int_0^\infty\frac{\big||f_n(r)|^2-|q(r)|^2\big|}{r}dr
    \end{align*}
    which also converges to 0 by \eqref{eq:minimizer_coulomb_energy}. Together with $\|q'\|_2\leq\liminf_{n\to\infty} \|f_n'\|_2$, we have
    \begin{equation}
        \begin{split}
            \mathbf{E}[q]&\leq \frac{1}{2}\liminf_{n\to\infty} \|f_n'\|_2^2+\lim_{n\to\infty}\left(\frac{\kappa}{2}\int_0^\infty \frac{|f_n(r)|^2}{r}dr+\frac{\lambda}{4}\mathfrak{D}[f_n]\right)\\
            &\leq \lim_{n\to\infty}\mathbf{E}[f_n]=\mathfrak{s}(\mass).
        \end{split}
    \end{equation}
    Hence $q\neq 0$. If $\|q\|_2^2<\mass$, set $\alpha:=\sqrt\mass/\|q\|_2>1$. Then $\|\alpha q\|_2^2=\mass$, but
    \begin{equation}\label{eq:rescale_q}
        \mathbf E[\alpha q]=\alpha^2\mathbf{E}[q]-\frac{|\lambda|}{4}(\alpha^4-\alpha^2)\mathfrak{D}[q]<\mathbf{E}[q]\leq \mathfrak{s}(\mass),
    \end{equation}
    since $\mathbf{E}(q)<0$. Hence \eqref{eq:rescale_q} contradicts the definition of $\mathfrak{s}(\mass)$. Thus $\|q\|_2^2=\mass$ and hence $\mathbf{E}[q]=\mathfrak{s}(\mass)$.

    \medskip

    Replacing $q$ by $|q|$ preserves the mass and potential energies and does not increase the kinetic energy, so we take $q\geq 0$. A standard variational argument under the mass constraint gives \eqref{eq:stationary_equation} for some $\omega\in\R$. More specifically, symmetry of the Coulomb kernel gives
    \[
     \partial_\varepsilon\,\mathfrak{D}[q+\varepsilon \zeta]\,\bigg|_{\varepsilon=0} =4\Re \int_0^\infty \varphi_q q\,\bar \zeta dr,\qquad \zeta\in H_0^1(\dom),
    \]
    and Hardy's inequality ensures that all terms in the first variation are well-defined. 

    To prove $\omega>0$, we apply the mass-preserving dilation in \eqref{eq:energy_f_nu} with $f=q$. Since $q$ minimizes the energy at mass $\mass$, the energy is minimized at $\nu=1$. Differentiating \eqref{eq:energy_f_nu} at $\nu=1$ gives
    \[
    \int_0^\infty|q'(r)|^2 dr +\frac{\kappa}{2} \int_0^\infty\frac{|q(r)|^2}{r} dr +\frac{\lambda}{4} \int_0^\infty\int_0^\infty \frac{|q(r)|^2|q(s)|^2}{\max\{r,s\}} drds=0.
    \]
    On the other hand, testing \eqref{eq:stationary_equation} against $\bar q$ gives
    \[
    \int_0^\infty|q'(r)|^2 dr +\kappa \int_0^\infty\frac{|q(r)|^2}{r} dr+ \omega\int_0^\infty|q(r)|^2 dr+ \lambda \int_0^\infty\int_0^\infty \frac{|q(r)|^2|q(s)|^2}{\max\{r,s\}} drds=0.
    \]
    Hence, 
    \begin{equation}
        \omega\mass=3\|q'\|_2^2 +\kappa\int_0^\infty\frac{|q(r)|^2}{r} dr>0.
    \end{equation}

    \medskip

    Finally, this positive frequency gives exponential decay of $q$. First, Sobolev embedding and $q\in H_0^1(\dom)$ gives a continuous representative of $q$. The coefficient in \eqref{eq:stationary_equation} is continuous on $\R_+$, so this identifies the distributional derivative $q''$ with a continuous function. Hence $q\in C^2(\dom)$. With $\kappa>0$ and \eqref{eq:half_line_coulomb_est}, for sufficiently large $R_0$, like $R_0=\max\{1,\frac{3|\lambda|\mass}{\omega}\}$, we have
    \begin{equation*}
        \omega+\frac\kappa r-|\lambda|\varphi_q(r)\geq \omega-\frac{|\lambda|\mass}{r}> \frac{\omega}{2}, \quad \text{for }r\geq R_0.
    \end{equation*}
    Stationary equation \eqref{eq:stationary_equation}  and $q\geq 0$ therefore prove $q''(r)\geq \frac{\omega}{2}q(r)$ for large $r$. Also, $q(r)\to 0$ as $r\to\infty$ from $q\in H^1(\dom)$. 

    We compare $q$ with $q(R_0)e^{-\sqrt{\omega/2}(r-R_0)}$. The difference satisfies
    \begin{equation}\label{eq:gamma_difference_est}
        \gamma''(r)-\frac{\omega}{2}\gamma (r) =q''(r)- \frac{\omega}{2}q(r)\geq 0,\quad \gamma(r):=q(r)-q(R_0)e^{-\sqrt{\omega/2}(r-R_0)}.
    \end{equation}
    Also $\gamma(R_0)=0$ and converges to $0$ as $r\to\infty$. If $\gamma$ is positive somewhere, the boundary conditions would imply that it attains a positive interior maximum at some point, which contradicts \eqref{eq:gamma_difference_est}. Hence, 
    \begin{equation*}
        0\leq q(r)\leq q(R_0)e^{-\sqrt{\omega/2}(r-R_0)},\qquad r\geq R_0,
    \end{equation*}
    and in particular, we have $rq\in L^2(\dom)$.
\end{proof}

We now use this proposition to construct standing waves which keeps its physical mass localized. In the rescaled variable $y=x/t$, this mass concentrates at the origin, preventing strong convergence of any phase-corrected profile.
\begin{proof}[Proof of Theorem \ref{thm:coulomb_focusing} for $|\lambda|\mass>2\kappa$] 
    Assume $\mass>2\kappa/|\lambda|$, and let $q$ and $\omega$ be as in Proposition \ref{prop:stationary_solution}. Define
    \begin{equation}\label{eq:Q}
        Q(x):=\frac{q(|x|)}{\sqrt{4\pi}\,|x|}.
    \end{equation}
    Scaling \eqref{eq:radial_profile_scale} give
    \begin{equation*}
        \|Q\|_{L^2(\R^3)}^2=\mass, \qquad \|\nabla Q\|_{L^2(\R^3)}=\|q'\|_2, \qquad \|xQ\|_{L^2(\R^3)}=\|rq\|_2.
    \end{equation*}
    Hence $Q\in \Sigma$. By \eqref{eq:Q} and Newton's formula, \eqref{eq:stationary_equation} gives
    \begin{equation*}
        -\Delta Q+\frac{\kappa}{|x|}Q+\lambda\phi_Q Q+\omega Q=0
    \end{equation*}
    in the distributional sense on $\R^3$. Since $\phi_{e^{-i\omega t}Q}=\phi_Q$, the standing wave $u(t,x):=e^{-i\omega t}Q(x)$ solves \eqref{main} with potential $V(x)=\kappa/|x|$ and initial data $u(0,x)=Q(x)$. 

    Since $|u(t,x)|=|Q(x)|$, these standing waves retain a positive amount of mass in a fixed ball for all time. This is incompatible with the modified scattering asymptotics, which imply that the mass in every fixed ball tends to zero as $t\to\infty$. Thus, we finish showing that for every $|\lambda|\mass>2\kappa $, there exists some radial data in $\Sigma$ with mass $\mass$ whose solutions do not exhibit modified scattering. This proves that the mass threshold is sharp and finishes the proof of Theorem \ref{thm:coulomb_focusing}.
\end{proof}

%%%%%%%%%%%%%%%%%%%%%%%%%%%%%%%%%%%%%%%
%%%%%%%%%%%%%%%%%%%%%%%%%%%%%%%%%%%%%%%
%%%%%%%%%%%%%%%%%%%%%%%%%%%%%%%%%%%%%%%
%%%%%%%%%%%%%%%%%%%%%%%%%%%%%%%%%%%%%%%
%%%%%%%%%%%%%%%%%%%%%%%%%%%%%%%%%%%%%%%
%%%%%%%%%%%%%%%%%%%%%%%%%%%%%%%%%%%%%%%

\section{The focusing problem for general external potentials}\label{sec:general} 
We prove Theorem \ref{thm:square_focusing} in this section. Throughout this section, we consider $\lambda<0$, and $V$ satisfies assumptions \eqref{eq:A1}--\eqref{eq:A3} and \eqref{eq:F1} with $0\leq b<1/2$. Proposition \ref{prop:gwp} gives the global solution $u\in C(\R;\Sigma)$, and we use the rescaled profile $g$ from \eqref{eq:g}. All spaces and norms will be on the full space $\R^3$. 

Focusing interaction breaks the positivity of the interaction term in the rescaled energy \eqref{eq:K}. However, smallness of the data gives a uniform $L^3$ bound for $g$ while allowing $\ifun$ from \eqref{eq:I} to grow slowly. These bounds improve the convergence of the Hartree field in Proposition \ref{prop:limiting_field} and verify the hypotheses of the common frozen evolution estimates in Proposition \ref{prop:focusing_cauchy_frozen}. We then compare the original and the frozen evolutions through quadratic forms, without spatial or amplitude truncations, to obtain modified scattering. 

\subsection{Small-data bounds for the profile}\label{subsec:boot}
The positive form $\ifun$ controls the gradient, but its growth estimate \eqref{eq:I_bound} depends on $\|g(t)\|_3$. The density estimate in Lemma \ref{lem:density_diff} supplies this missing control. 
\begin{lemma}\label{lem:boot}
    Fix $b/2<\alpha<\frac{1}{4}$. There exists $\varepsilon_0>0$, depending only on $V$, $\alpha$, and $\lambda$ such that, if \eqref{eq:initial_cond} holds, for every $t\geq 1$, we have
    \begin{equation}\label{eq:small_profile}
        \|g(t)\|_3\lesssim_{V,\lambda,\alpha} \varepsilon_0, \qquad \ifun(t)\lesssim_{V,\lambda,\alpha} \varepsilon_0^2 t^{2\alpha}.
    \end{equation}
\end{lemma}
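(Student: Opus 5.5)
A continuity (bootstrap) argument on $[1,\infty)$ will prove both bounds in \eqref{eq:small_profile} together. By \eqref{eq:small_initial}, \eqref{eq:initial_cond} gives $\mass+\ifun(1)\leq C_0\varepsilon_0^2$ with $C_0$ depending only on $V,\lambda$ (for $\varepsilon_0\leq1$). Fix a large constant $K$ and let $T^*$ be the supremum of times $T\geq1$ such that
\[
\|g(t)\|_3\leq 2K\varepsilon_0,\qquad \ifun(t)\leq 2K^2\varepsilon_0^2t^{2\alpha},\qquad 1\leq t\leq T.
\]
At $t=1$, Gagliardo--Nirenberg gives $\|g(1)\|_3\lesssim\mass^{1/4}\ifun(1)^{1/4}\lesssim\varepsilon_0$. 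Continuity of $g$ in $\Sigma$ (Proposition \ref{prop:gwp}) makes $t\mapsto\|g(t)\|_3$ and $\ifun(t)$ continuous, so $T^*>1$. It then suffices to improve both bounds to constants $K$ and $K^2$ on $[1,T^*)$, for $\varepsilon_0$ small.

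\emph{Improving $\ifun$.} Insert $\|g(t)\|_3\leq2K\varepsilon_0$ into \eqref{eq:I_bound} and apply Gronwall:
\[
\ifun(t)\leq \ifun(1)\,t^{b+4C|\lambda|K^2\varepsilon_0^2}.
\]
Choose $\varepsilon_0$ so small that $b+4C|\lambda|K^2\varepsilon_0^2\leq2\alpha$; this is possible because $b<2\alpha$. The bound becomes $\ifun(t)\leq C_0\varepsilon_0^2t^{2\alpha}$, which is at most $K^2\varepsilon_0^2t^{2\alpha}$ once $K^2\geq C_0$. Since $V\geq0$, the improved bound also gives $\|\nabla g(t)\|_2^2\leq C_0\varepsilon_0^2t^{2\alpha}$.

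\emph{Improving $\|g\|_3$.} This is the main step. Since $\|g(t)\|_3^2=\||g(t)|^2\|_{3/2}$, telescope the density along the dyadic times $t_k=2^k$. For $t\in[2^n,2^{n+1}]$, Lemma \ref{lem:density_diff} with the improved gradient bound gives
\[
\big\||g(t)|^2-|g(1)|^2\big\|_{3/2}\leq \sum_{k\leq n}\big\||g(\min\{2^{k+1},t\})|^2-|g(2^k)|^2\big\|_{3/2}\lesssim \sum_{k\geq0}2^{-k/2}C_0\varepsilon_0^2\,2^{2\alpha(k+1)}\lesssim_\alpha C_0\varepsilon_0^2.
\]
The series converges precisely because $\alpha<1/4$. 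Hence
\[
\|g(t)\|_3^2\leq\|g(1)\|_3^2+C_\alpha C_0\varepsilon_0^2.
\]
Taking $K$ large in terms of $C_0$ and $C_\alpha$ (with $C_0,C_\alpha$ fixed first, then $\varepsilon_0$ chosen after $K$) gives $\|g(t)\|_3\leq K\varepsilon_0$.

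Both improved bounds hold on $[1,T^*)$, so continuity forces $T^*=\infty$, which proves \eqref{eq:small_profile}. The delicate point is the order of constants. $C_0$ and $C_\alpha$ must not depend on $K$, and they do not, since the improved $\ifun$ bound uses only $\ifun(1)$. Then $K$ is fixed, and finally $\varepsilon_0$ is chosen so that the Gronwall exponent stays below $2\alpha$. It must also be checked that Lemma \ref{lem:density_diff} uses only $\sup\|\nabla g\|_2^2$ on each dyadic block, so the bootstrap hypothesis on $[1,T]$ is enough to apply it.
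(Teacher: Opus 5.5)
Your argument is correct and follows the paper's proof: a continuity argument that feeds the $L^3$ bound into \eqref{eq:I_bound} with Gronwall, and controls $\|g\|_3$ through Lemma \ref{lem:density_diff} with dyadic summation (convergent because $\alpha<1/4$), starting from the smallness \eqref{eq:small_initial} at $t=1$. The only difference is bookkeeping. The paper bootstraps just $\ifun(t)\le\varepsilon^2t^{2\alpha}$ with $\varepsilon_0\ll\varepsilon\ll1$ and derives the $L^3$ bound inside the bootstrap, whereas your joint bootstrap on $\|g\|_3$ and $\ifun$ (with $K$ fixed before $\varepsilon_0$) gives $\|g(t)\|_3\lesssim\varepsilon_0$ directly.
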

\begin{proof}
    We impose the following bootstrap on interval $[1,T]$
    \begin{equation}\label{boot}
        \ifun(t)\leq \varepsilon^2 t^{2\alpha}.
    \end{equation}
    Since $\|\nabla g(t)\|_2^2\leq \ifun(t)$ by \eqref{eq:I}, Lemma \ref{lem:density_diff} gives
    \begin{equation}\label{eq:increment}
        \big\| |g(t)|^2-|g(s)|^2\big\|_\frac{3}{2}\lesssim \varepsilon^2 s^{2\alpha-\frac{1}{2}}, \qquad \text{for }1\leq s\leq t\leq \min\{2s,T\}.
    \end{equation}
    At $t=1$, \eqref{eq:small_initial} bounds $\|\nabla g(1)\|_2^2\leq \ifun(1)\lesssim_{V,\lambda}\varepsilon_0^2$. Interpolation and Sobolev embedding give
    \[
    \|g(1)\|_3^2\lesssim \|g(1)\|_2\|\nabla g(1)\|_2\lesssim \varepsilon_0^2. 
    \]
    We decompose the interval $[1,t]$ dyadically and bound using \eqref{eq:increment} and $N=\lfloor\log_2 t\rfloor$ 
    \begin{equation}\label{eq:improved_L3_g}
        \begin{split}
            \|g(t)\|_3^2&\leq \|g(1)\|_3^2+\sum_{j=0}^{N-1} \big\| |g(2^{j+1})|^2-|g(2^j)|^2\big\|_\frac{3}{2}+\big\| |g(t)|^2-|g(2^N)|^2\big\|_\frac{3}{2}\\
            &\leq \|g(1)\|_3^2+\sum_{j=0}^\infty \varepsilon^2 2^{j(2\alpha-\frac{1}{2})}\lesssim_{V,\lambda,\alpha}\varepsilon_0^2+\varepsilon^2.
        \end{split}
    \end{equation}
    Here the summation converges because $\alpha<1/4$. Substituting \eqref{eq:improved_L3_g} into \eqref{eq:I_bound} and applying Gronwall's inequality give
    \begin{equation}\label{eq:improved_boot}
        \ifun(t)\leq \ifun(1) t^{b+C(V,\lambda,\alpha)|\lambda|(\varepsilon^2+\varepsilon_0^2)}\leq C(V,\lambda)\varepsilon_0^2t^{b+C(V,\lambda,\alpha)|\lambda|(\varepsilon^2+\varepsilon_0^2)}.
    \end{equation}
    Therefore, \eqref{eq:improved_boot} improves \eqref{boot} for $\varepsilon_0\ll \varepsilon\ll 1$, where the dependence relies on $V$, $\lambda$ and $\alpha$. Continuity therefore proves \eqref{eq:small_profile} for $t\geq 1$. 
\end{proof}

%%%%%%%%%%%%%%%%%%%%%%%%%%%%%%%%%%%%%%%
%%%%%%%%%%%%%%%%%%%%%%%%%%%%%%%%%%%%%%%
%%%%%%%%%%%%%%%%%%%%%%%%%%%%%%%%%%%%%%%

\subsection{Improved convergence of the field}\label{subsec:small_field_convergence}
We now use the small-data bounds in Lemma \ref{lem:boot} to strength the limiting field's estimates from Proposition \ref{prop:limiting_field}. From now on, we work with initial data satisfying \eqref{eq:initial_cond} with $\varepsilon_0$ picked in Lemma \ref{lem:boot}. We also fix $\lambda$, $V$ and $\alpha$, and allow the implicit constants to depend on $\lambda$, $V$ and $\alpha$ without writing them explicitly. 

We first verify the hypotheses of Proposition \ref{prop:limiting_field} are met. The Hardy-Littlewood-Sobolev inequality and interpolation give
\[
\dob[g(t)]\lesssim\|g(t)\|_{\frac{12}{5}}^4 \leq\|g(t)\|_2^2\|g(t)\|_3^2 \lesssim \mass\varepsilon_0^2.
\]
Together with \eqref{eq:small_profile}, this proves
\begin{equation}
    A\lesssim \varepsilon_0^2,\qquad B\lesssim \mass\varepsilon_0^2,
\end{equation}
with $A,B$ defined in Proposition \ref{prop:limiting_field}. Hence, Proposition \ref{prop:limiting_field} gives the existence of the nonnegative limiting field $\Phi_\infty \in \dot H^1$. 
\begin{lemma}\label{lem:improved_limiting_field}
    For $t\geq 1$, set $\eta:=1/4-\alpha>0$, we have
    \begin{equation}\label{eq:improved_field_convergence}
        \big\|\phi_g(t)-\Phi_\infty\big\|_6+\big\|\nabla\phi_g(t)-\nabla\Phi_\infty\big\|_2\lesssim \sqrt{\mass}\varepsilon_0 t^{-\eta}.
    \end{equation}
    Moreover, 
    \begin{equation}\label{eq:limiting_field_nabla_3}
        \|\nabla \Phi_\infty\|_3\lesssim \varepsilon_0^2. 
    \end{equation}
\end{lemma}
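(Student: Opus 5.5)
The plan is to prove \eqref{eq:improved_field_convergence} by estimating the time derivative $\partial_t\nabla\phi_g$ in $L^2$ dyadically, using Lemma \ref{lem:density_diff} together with the bootstrap bounds of Lemma \ref{lem:boot}, instead of the $L^2$ bound on $\partial_t\phi_g$ used in Proposition \ref{prop:limiting_field}. Since $-\Delta\phi_g=4\pi|g|^2$, for $1\leq s\leq t\leq 2s$ the Hardy--Littlewood--Sobolev inequality (equivalently, Sobolev embedding for $\nabla(-\Delta)^{-1}$, boundedness $L^{6/5}\to L^2$) gives
\[
\|\nabla\phi_g(t)-\nabla\phi_g(s)\|_2\lesssim\big\||g(t)|^2-|g(s)|^2\big\|_{6/5}.
\]
The increment in $L^{6/5}$ I would obtain by interpolating between $L^1$ and $L^{3/2}$: the $L^1$ norm is trivially $\leq 2\mass$. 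But it is better to use $\||g(t)|^2-|g(s)|^2\|_{6/5}\leq \|g(t)-g(s)\|$-free estimates: interpolation $\|F\|_{6/5}\leq\|F\|_1^{1/2}\|F\|_{3/2}^{1/2}$ with $\|F\|_1\lesssim\mass$ only gives $s^{\alpha-1/4}$ times $\sqrt{\mass}\varepsilon_0$, which is exactly the claimed rate $t^{-\eta}$ per dyadic block, with no decay gain to sum. So instead I would combine the negative-order estimate \eqref{eq:density_diff_1} directly: $\|\nabla(\phi_g(t)-\phi_g(s))\|_2\lesssim\||\nabla|^{-1}(|g(t)|^2-|g(s)|^2)\|_2$, and bound $\||\nabla|^{-1}\nabla\cdot\Im(\bar g\nabla g)\|_2\lesssim\|\bar g\nabla g\|_2$? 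That requires $g\in L^\infty$. Hence I will use $\|\cdot\|_{6/5}$ applied to the flux: $\|\nabla(-\Delta)^{-1}\nabla\cdot F\|_2\lesssim \|F\|_2$ fails, so use $L^{6/5}\to$? Concretely, I choose: $\partial_t\nabla\phi_g=8\pi t^{-2}\nabla(-\Delta)^{-1}\nabla\cdot\Im(\bar g\nabla g)$, a Riesz-transform composition, bounded on $L^p$; then estimate in $L^{3/2}$: $\|\partial_t\nabla\phi_g\|_{3/2}\lesssim t^{-2}\|g\|_6\|\nabla g\|_2\lesssim\varepsilon_0^2t^{2\alpha-2}$, and interpolate with a uniform $L^3$ bound $\|\nabla\phi_g\|_3\lesssim\|g\|_3^2\lesssim\varepsilon_0^2$ from Lemma \ref{lem:coulomb_est}, together with the $\dot H^{-1}$-type bound giving $L^2$ of the field difference. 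The cleanest route: by Gagliardo--Nirenberg-type interpolation $\|\nabla\psi\|_2\lesssim\|\psi\|_6^{1/2}\|\nabla\nabla\psi\|_{3/2}^{1/2}$-style bounds, feeding Lemma \ref{lem:density_diff} (which controls $\|\Delta(\phi_g(t)-\phi_g(s))\|_{3/2}\lesssim\varepsilon_0^2 s^{2\alpha-1/2}$) and \eqref{eq:density_diff_1} (which controls $\|\nabla(\phi_g(t)-\phi_g(s))\|_{3/2}\lesssim\varepsilon_0^2 s^{2\alpha-1}$). Interpolating $\dot W^{1,3/2}$ and $\dot W^{2,3/2}$ to reach $\dot H^1$ (scaling: $\dot W^{1+1/2,3/2}\hookrightarrow\dot H^1$) yields $\|\nabla(\phi_g(t)-\phi_g(s))\|_2\lesssim\varepsilon_0^2 s^{2\alpha-3/4}$, which is dyadically summable with tail $\lesssim\varepsilon_0^2t^{2\alpha-3/4}\leq\varepsilon_0 t^{-\eta}$ for $\alpha<1/4$. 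One factor $\varepsilon_0$ may be traded for $\sqrt\mass$ if one uses $\|g\|_2$ in place of the small $\|g\|_6$ somewhere; I would track the factors to match $\sqrt{\mass}\varepsilon_0$.

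Summing the dyadic increments shows $\nabla\phi_g(t)$ is Cauchy in $L^2$; its limit must be $\nabla\Phi_\infty$ since $\phi_g(t)\to\Phi_\infty$ in $L^2$ (distributions) by Proposition \ref{prop:limiting_field}. Sobolev embedding $\dot H^1\subset L^6$ then gives the $L^6$ part of \eqref{eq:improved_field_convergence}, noting $\phi_g(t)-\Phi_\infty\in L^6$ (it is in $L^2$ and its gradient in $L^2$). Finally \eqref{eq:limiting_field_nabla_3} follows from $\|\nabla\phi_g(t)\|_3\lesssim\|g(t)\|_3^2\lesssim\varepsilon_0^2$ (Lemma \ref{lem:coulomb_est} and Lemma \ref{lem:boot}) and weak lower semicontinuity of the $L^3$ norm along $\nabla\phi_g(t)\to\nabla\Phi_\infty$.

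The main obstacle is the interpolation step producing an $\dot H^1$ increment bound with a summable power of $s$; I expect to verify carefully the homogeneous interpolation $\|\nabla\psi\|_2\lesssim\|\nabla\psi\|_{3/2}^{1/2}\|\nabla^2\psi\|_{3/2}^{1/2}$ (via \eqref{eq:GN} applied to $\nabla\psi$ with Riesz transforms), and the exponent bookkeeping to land on $t^{-\eta}$.
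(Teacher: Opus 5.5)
Your final interpolation route is analytically sound, but it does not give the estimate with the stated prefactor, and the reason you give for abandoning the direct route is mistaken.

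With $\psi=\phi_g(t)-\phi_g(s)$ and $s\le t\le 2s$, the three ingredients you list all hold:
\begin{itemize}
\item \eqref{eq:density_diff_1} and the Riesz transforms give $\|\nabla\psi\|_{3/2}\lesssim\varepsilon_0^2s^{2\alpha-1}$;
\item Lemma~\ref{lem:density_diff} with Calder\'on--Zygmund gives $\|\nabla^2\psi\|_{3/2}\lesssim\varepsilon_0^2s^{2\alpha-1/2}$;
\item the interpolation you want is elementary: $\|f\|_2\le\|f\|_{3/2}^{1/2}\|f\|_3^{1/2}\lesssim\|f\|_{3/2}^{1/2}\|\nabla f\|_{3/2}^{1/2}$, by H\"older and Sobolev.
\end{itemize}
So $\|\nabla\psi\|_2\lesssim\varepsilon_0^2s^{2\alpha-3/4}$, which does sum dyadically.

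The problem is the prefactor. Every factor here is controlled through $\|\nabla g\|_2$, so you get $\varepsilon_0^2$, not $\sqrt{\mass}\,\varepsilon_0$. Since $\sqrt{\mass}\le\|u_0\|_\Sigma\le\varepsilon_0$, and the implicit constants may depend only on $\lambda,V,\alpha$ and not on the data, your bound does not yield the stated one. For data of small mass and $t$ of order one, it would require $\varepsilon_0\lesssim\sqrt{\mass}$. Trading $\|g\|_6$ for $\|g\|_2$, as you suggest, does not help: $\bar g\nabla g$ is then only in $L^1$, where the Riesz transforms are unbounded.

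The missing factor $\sqrt{\mass}$ comes from the trivial bound $\||g(t)|^2-|g(s)|^2\|_1\le 2\mass$. That is exactly the route you discarded, and it is the paper's proof. By Lemmas~\ref{lem:density_diff} and~\ref{lem:boot},
\[
\|F\|_{6/5}\le\|F\|_1^{1/2}\|F\|_{3/2}^{1/2}\lesssim\sqrt{\mass}\,\varepsilon_0 s^{-\eta},
\]
and then Hardy--Littlewood--Sobolev bounds both $\|\psi\|_6$ and $\|\nabla\psi\|_2$ by $\|F\|_{6/5}$.

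Your objection that this leaves ``no decay gain to sum'' is wrong. The block bound is a negative power of the block's starting time, so $\sum_{j\ge 0}(2^jt)^{-\eta}=t^{-\eta}/(1-2^{-\eta})$ whenever $\eta>0$. This is the same geometric summation as in \eqref{eq:improved_L3_g}. Dyadic summation needs only $\eta>0$, not a rate better than the target.

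The rest of your argument matches the paper and is fine: identifying the limit as $\nabla\Phi_\infty$, the $L^6$ bound via $\dot H^1\subset L^6$, and weak lower semicontinuity for $\|\nabla\Phi_\infty\|_3$. Your variant, with faster decay $t^{2\alpha-3/4}$ but prefactor $\varepsilon_0^2$, would in fact suffice for the proof of Theorem~\ref{thm:square_focusing}, where $\mass\le\varepsilon_0^2$ is invoked anyway. It just does not prove the lemma as stated.
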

\begin{proof}
    For $1\leq s\leq t\leq 2s$, mass conservation gives
    \begin{equation}\label{eq:increment_L1}
    \big\||g(t)|^2-|g(s)|^2\big\|_1\leq 2\mass.
    \end{equation}
    Applying Lemma \ref{lem:density_diff} with \eqref{eq:small_profile} and then interpolating with \eqref{eq:increment_L1}, we have
    \begin{equation}
         \big\||g(t)|^2-|g(s)|^2\big\|_{\frac{6}{5}}\leq \big\||g(t)|^2-|g(s)|^2\big\|_1^{\frac{1}{2}}\big\||g(t)|^2-|g(s)|^2\big\|_{\frac{3}{2}}^{\frac{1}{2}}\lesssim \sqrt{\mass} \varepsilon_0 s^{-\eta}.
    \end{equation}
    The Hardy-Littlewood-Sobolev inequality therefore yields
    \[
    \|\phi_g(t)-\phi_g(s)\|_6+ \big\|\nabla\phi_g(t)-\nabla\phi_g(s)\big\|_2 \lesssim_\alpha\sqrt{\mass}\varepsilon_0 s^{-\eta}.
    \]
    Summing over dyadic intervals starting at $t$ as in \eqref{eq:improved_L3_g} proves \eqref{eq:improved_field_convergence}. 

    Finally, Lemma \ref{lem:coulomb_est} and \eqref{eq:small_profile} give
    \[
    \|\nabla\phi_g(t)\|_3 \lesssim\|g(t)\|_3^2 \lesssim_\alpha \varepsilon_0^2.
    \]
    By weak compactness and the $L^2$ convergence in \eqref{eq:improved_field_convergence}, $\nabla\phi_g(t_j)\rightharpoonup\nabla\Phi_\infty$ in $L^3$ along a sequence $t_j\to\infty$. Weak lower semicontinuity proves \eqref{eq:limiting_field_nabla_3}.
\end{proof}

%%%%%%%%%%%%%%%%%%%%%%%%%%%%%%%%%%%%%%%
%%%%%%%%%%%%%%%%%%%%%%%%%%%%%%%%%%%%%%%
%%%%%%%%%%%%%%%%%%%%%%%%%%%%%%%%%%%%%%%

\subsection{Frozen comparison and modified scattering}\label{subsec:small_frozen_evolution}
As in Subsection \ref{subsec:frozen_evolution}, we set $\non_\infty:=\lambda\Phi_\infty$ and define $h(t)=e^{-i\non_\infty\log(t)}g(t)$. We will use the full profiles $g$ and $h$ without the spatial cutoff $\chi_\delta$. For $S\geq 1$, we define the frozen evolution
\begin{equation}\label{eq:focusing_frozen_PDE}
    i\partial_t g^S-t^{-2}\Delta g^S=-t^{-1}\non g^S,\quad g^S(S)=g(S),\quad \non:=\non_\infty.
\end{equation}
and set $h^S(t)=e^{-i\non\log(t)}g^S(t)$. This is the frozen equation \eqref{eq:frozen_PDE} with initial datum $g(S)$ in place of $g_\delta(S)$ and with coefficient $\non=\non_\infty$. The proofs of Lemma \ref{lem:frozen_est} and Proposition \ref{prop:focusing_cauchy_frozen} use only an $H^1$ initial datum and its mass bound, so they apply to the evolution \eqref{eq:focusing_frozen_PDE}. Since $\non$ is real and belongs to $\dot H^1$, Lemma \ref{lem:frozen_est} gives a unique solution $g^S\in C([S,\infty);H^1)$ with 
\begin{equation}\label{eq:focusing_frozen_mass}
    \big\|g^S(t)\big\|_2^2=\mass,\quad \text{for }t\geq S.
\end{equation}

To apply Proposition \ref{prop:focusing_cauchy_frozen}, we use H\"older inequality, Sobolev embedding and \eqref{eq:limiting_field_nabla_3} and bound, for every $f\in H^1$, 
\begin{equation}
    \big\|(\nabla \non) f\big\|_2\leq |\lambda|\|\nabla \Phi_\infty\|_3\|f\|_6\leq C|\lambda|\|\nabla \Phi_\infty\|_3\|\nabla f\|_2.
\end{equation}
Thus, \eqref{eq:focusing_cauchy_assumption} holds with $a_*=C|\lambda|\|\nabla \Phi_\infty\|_3$ and $b_*=0$. Decreasing $\varepsilon_0$, if needed, we may take $a_*\leq \alpha<1/4$. Proposition \ref{prop:focusing_cauchy_frozen} and \eqref{eq:small_profile} then yield
\begin{equation}\label{eq:focusing_nabla_g_frozen}
    \big\|\nabla g^S(t)\big\|_2\lesssim \left(\frac{t}{S}\right)^\alpha\left(\varepsilon_0 S^{\alpha}\right)=\varepsilon_0 t^\alpha.
\end{equation}
Applying this slow growth \eqref{eq:focusing_frozen_cauchy_est} gives
\begin{equation}\label{eq:focusing_improved_frozen_est}
    \sup_{t\geq S}\big\|h^S(t)-h(S)\big\|_2^2\lesssim \varepsilon_0^2 S^{2\alpha-1}.
\end{equation}

We are now ready to prove the last theorem.
\begin{proof}[Proof of Theorem \ref{thm:square_focusing}]
    Choose $\alpha$ with $b/2<\alpha<1/4$ depending only on $b$, and consider $\varepsilon_0$ small enough such that both Lemma \ref{lem:boot} and $a_*\leq \alpha$ hold. The smallness of $\varepsilon_0$ depends only on $V$, $b$ and $\lambda$. 
    
    Take initial datum $u_0$ satisfying \eqref{eq:initial_cond}. Since $g^S(S)=g(S)$, \eqref{eq:focusing_frozen_mass} gives
    \begin{equation}\label{eq:focusing_rewrite_comparison}
        \|g(t)-g^S(t)\|_2^2=2\mass-2\Re\la g(t), g^S(t)\ra. 
    \end{equation}
    We rewrite \eqref{eq:PDE_g_general} as
    \begin{equation}
        i\partial_tg -t^{-2}\Delta g=-t^{-1}\non g-\lambda t^{-1}(\phi_g-\Phi_\infty)g -V(ty)g.
    \end{equation}
    Together with \eqref{eq:focusing_frozen_PDE}, this gives\footnote{This identity holds almost everywhere in time, with $\partial_tg,\partial_tg^S\in H^{-1}$ paired against $g^S,g\in H^1$, respectively. }
    \begin{equation}\label{eq:focusing_pair}
        \partial_t\Re \la g(t), g^S(t)\ra= -\int V(ty)\Im\big(g(t,y)\overline{g^S(t,y)}\big)dy-\lambda t^{-1}\int(\phi_g(t)-\Phi_\infty)\Im\big(g(t)\overline{g^S(t)}\big)dy,
    \end{equation}
    where the common Laplacian and frozen-field terms cancel. 

    The field error is integrable. We apply H\"older inequality, \eqref{eq:small_profile}, \eqref{eq:improved_field_convergence} and \eqref{eq:focusing_frozen_mass} and obtain
    \begin{equation}\label{eq:focusing_field_diff_tail}
        \begin{split}
            \left|\lambda t^{-1}\int(\phi_g(t)-\Phi_\infty)\Im\big(g(t)\overline{g^S(t)}\big)dy\right|&\leq |\lambda|t^{-1} \|\phi_g(t)-\Phi_\infty\|_6\|g(t)\|_3\|g^S(t)\|_2\\
            &\lesssim \mass \varepsilon_0^2 t^{-1-\eta}.
        \end{split}
    \end{equation}
    For the external term, since $V\geq 0$, \eqref{eq:I} and H\"older inequality bound
    \begin{equation}\label{eq:focusing_potential_term_bound}
        \left|\int V(ty)\Im\big(g(t,y)\overline{g^S(t,y)}\big)dy\right|\leq t^{-2}\ifun(t)^\frac{1}{2}\left(\ifun[g^S(t)](t)\right)^\frac{1}{2}.
    \end{equation}
    Fix $t\geq S$ and apply \eqref{eq:ifun_fixed} to the fixed spatial function $f=g^S(t)$. Assumption \eqref{eq:A3}, Hardy’s inequality, \eqref{eq:focusing_nabla_g_frozen} and \eqref{eq:focusing_frozen_mass} then give
    \begin{equation}
        \begin{split}
            \ifun[g^S(t)](t)&\leq t^b\ifun[g^S(t)](1) = t^b\left(\|\nabla g^S(t)\|_2^2 +\int V(y)|g^S(t,y)|^2\,dy\right)\\
            &\leq t^b\left((1+8C_V)\|\nabla g^S(t)\|_2^2 +C_V \mass\right)\lesssim_V \varepsilon_0^2 t^{b+2\alpha}.
        \end{split}
    \end{equation}
    Together with \eqref{eq:focusing_potential_term_bound} and \eqref{eq:small_profile}, we have
    \begin{equation}\label{eq:focusing_potential_tail}
        \left|\int V(ty)\Im\big(g(t,y)\overline{g^S(t,y)}\big)dy\right|\lesssim \varepsilon_0^2 t^{-2+2\alpha+\frac{b}{2}}.
    \end{equation}
    Since $1-2\alpha-b/2>0$, this is also integrable. In particular, this argument allows for inverse-square singularities and does not require $V(ty)g(t)\in L^2$.

    Since $g^S(S)=g(S)$ and \eqref{eq:focusing_rewrite_comparison}, integrating \eqref{eq:focusing_pair} using \eqref{eq:focusing_field_diff_tail}, \eqref{eq:focusing_potential_tail} and $\mass\leq \varepsilon_0^2\leq 1$, we obtain
    \begin{equation}\label{eq:focusing_comparison_frozen_est}
        \sup_{t\geq S} \big\|g(t)-g^S(t)\big\|_2^2\lesssim \varepsilon_0^2 \left(S^{-\eta}+S^{-1+2\alpha+\frac{b}{2}} \right).
    \end{equation}

    \medskip

    We now transfer \eqref{eq:focusing_comparison_frozen_est} to $h$. Using $h^S(S)=h(S)$, Cauchy estimate \eqref{eq:focusing_improved_frozen_est} and the comparison estimate \eqref{eq:focusing_comparison_frozen_est} give
    \begin{equation}
        \sup_{t\geq S}\|h(t)-h(S)\|_2^2\leq 2\sup_{t\geq S} \big\|g(t)-g^S(t)\big\|_2^2+2\sup_{t\geq S}\big\|h^S(t)-h(S)\big\|_2^2\lesssim \varepsilon_0^2\left(S^{-\eta}+S^{-1+2\alpha+\frac{b}{2}} \right).
    \end{equation}
    Here $S^{2\alpha-1}\leq S^{-1+2\alpha+b/2}$ because $b\geq0$ and $S\geq1$. Now, we fix $\alpha$ at the midpoint of its admissible interval:
    \[
    \alpha=\frac{1}{2}\left(\frac {b}{2}+\frac{1}{4}\right)=\frac{1+2b}{8},
    \]
    so that 
    \[
    -1+2\alpha+\frac{b}{2}= b-\frac{3}{4}<-\eta=-\frac{1-2b}{8}<0
    \]
    since $b<1/2$. Consequently, $h(t)$ is Cauchy in $L^2$ and converges strongly to some $U_\infty$. Sending $t\to\infty$, we have
    \begin{equation} \label{eq:focusing_h_convergence}
        \big\|h(t)-U_\infty\big\|_{2}\lesssim_{V,\lambda} \varepsilon_0 t^{-\frac{1-2b}{16}},\qquad \text{for }t\geq 1.
    \end{equation}
    Mass conservation and $|h(t)|=|g(t)|$ then gives 
    \[
    \|U_\infty\|_2^2=\mass,\qquad  \big\||g(t)|^2-|U_\infty|^2\big\|_1 \leq 2\sqrt{\mass} \|h(t)-U_\infty\|_2 \longrightarrow 0.
    \]
    For every $\zeta\in C_c^\infty(\R^3)$, the function $|\cdot|^{-1}*|\zeta|$ is bounded. The density convergence therefore gives $\phi_g(t)\to\phi_{U_\infty}$ in distributions. Comparing with the limiting-field convergence, we conclude that
    \[
    \Phi_\infty=\phi_{U_\infty}\in\dot H^1.
    \]
    
    Finally, undoing the definitions of $g$ and $h$ yields
    \begin{equation}
        \left\|u(t,x)-t^{-\frac{3}{2}}e^{-i\frac{|x|^2}{4t}}
        e^{i\lambda\Phi_\infty(x/t)\log t}
        U_\infty\left(\frac{x}{t}\right)\right\|_{L^2_x}=\|h(t)-U_\infty\|_2\lesssim \varepsilon_0t^{-\frac{1-2b}{16}}.
    \end{equation}
    This completes the proof.
\end{proof}

%%%%%%%%%%%%%%%%%%%%%%%%%%%%%%%%%%%%%%%
%%%%%%%%%%%%%%%%%%%%%%%%%%%%%%%%%%%%%%%
%%%%%%%%%%%%%%%%%%%%%%%%%%%%%%%%%%%%%%%
%%%%%%%%%%%%%%%%%%%%%%%%%%%%%%%%%%%%%%%
%%%%%%%%%%%%%%%%%%%%%%%%%%%%%%%%%%%%%%%
%%%%%%%%%%%%%%%%%%%%%%%%%%%%%%%%%%%%%%%

\end{document}